\documentclass[a4paper,11pt, twoside]{report}

\usepackage[english]{babel}
\usepackage[utf8]{inputenc}
\usepackage[T1]{fontenc}

\usepackage[a4paper,top=1in,bottom=1in,left=1in,right=1in,marginparwidth=1.75cm]{geometry}

\usepackage{afterpage}
\usepackage{amsmath}
\usepackage{amsthm}
\usepackage{amssymb}
\usepackage{csquotes}
\usepackage{enumitem}
\usepackage{graphicx}
\usepackage{lipsum}
\usepackage{booktabs}
\usepackage{mathrsfs}
\usepackage{listings}

\usepackage{titlesec}

\usepackage{caption}
\usepackage[ruled,vlined]{algorithm2e}
\makeatletter
\renewcommand{\SetKwInOut}[2]{%
  \sbox\algocf@inoutbox{\KwSty{#2}\algocf@typo:}%
  \expandafter\ifx\csname InOutSizeDefined\endcsname\relax
    \newcommand\InOutSizeDefined{}\setlength{\inoutsize}{\wd\algocf@inoutbox}%
    \sbox\algocf@inoutbox{\parbox[t]{\inoutsize}{\KwSty{#2}\algocf@typo:\hfill}~}\setlength{\inoutindent}{\wd\algocf@inoutbox}%
  \else
    \ifdim\wd\algocf@inoutbox>\inoutsize%
    \setlength{\inoutsize}{\wd\algocf@inoutbox}%
    \sbox\algocf@inoutbox{\parbox[t]{\inoutsize}{\KwSty{#2}\algocf@typo:\hfill}~}\setlength{\inoutindent}{\wd\algocf@inoutbox}%
    \fi%
  \fi
  \algocf@newcommand{#1}[1]{%
    \ifthenelse{\boolean{algocf@inoutnumbered}}{\relax}{\everypar={\relax}}%
    {\let\\\algocf@newinout\hangindent=\inoutindent\hangafter=1\parbox[t]{\inoutsize}{\KwSty{#2}\algocf@typo:\hfill}~##1\par}%
    \algocf@linesnumbered
  }}%
\makeatother

\usepackage{bm}
\usepackage[normalem]{ulem}

\usepackage[colorinlistoftodos]{todonotes}
\usepackage[colorlinks=true, allcolors=blue]{hyperref}

\usepackage[numbers, comma, square, sort&compress]{natbib}
\usepackage{tocloft}

\newcommand{\reporttitle}{A Calabi-Yau Metric on the Kummer Surface} 
 
\newcommand{\HRule}{\rule{\linewidth}{0.5mm}}
\begin{document}

\makeatletter
\patchcmd{\chapter}{\if@openright\cleardoublepage\else\clearpage\fi}{}{}{}
\makeatother

\begin{centering}

\makeatletter
\HRule \\[0.5cm]
{ \huge \bfseries \reporttitle}\\[0.5cm] 
\HRule \\[1cm]
 

   \large{Benjamin Shackleton}

{\large \today}\\[1cm]


\textbf{Abstract}\\

\end{centering}
We prove the existence of a Ricci flat metric on the Kummer K3 surface. The proof follows the general strategy of Donaldson's gluing construction, \cite{Simpap}. However, we tackle the analysis without appealing to weighted norms or conformal transformations to model spaces, instead relying solely on compact elliptic theory on usual H\"older and Sobolev spaces. As the Eguchi-Hanson space plays a central role in the construction, we also present and compare different descriptions of this space, showing explicitly that it is isometric to a suitable Gibbons-Hawking ansatz, \cite{GH1979}.

\tableofcontents
\vspace{-1cm}
\newtheorem{theorem}{Theorem}[chapter]

\newtheorem{lem}[theorem]{Lemma}
\newtheorem{prop}[theorem]{Proposition}
\newtheorem{defin}[theorem]{Definition}
\newtheorem{cor}{Corollary}[theorem]
\renewcommand{\thecor}{\thetheorem.\arabic{cor}}

\chapter{Introduction}

In 1976 Yau provided the first proof of the Calabi conjecture, showing that for a compact K\"ahler manifold, with first Chern class $\rho$, there exists a unique K\"ahler metric, $\omega$, within a given K\"ahler class, such that $\text{Ric}(\omega)=\rho$ for all $\rho\in c_1(M)\in H^2(M)$. This result was published in 1978, \cite{Yau1978}, and would be the work which earned Yau his Fields Medal in 1982. Due to it's many applications in geometry and physics, the case we are interested in is the Calabi-Yau, or Ricci flat, case where we have $\text{Ric}(\omega)=0$. In complex dimension 2 the simplest examples are K3 surfaces. In particular, the resolution of the Kummer surface, the manifold we will analyse in this approach. \\
\\
The construction is broken down into three parts, namely: Background, The Eguchi-Hanon Space and A Calabi-Yau Metric on the Kummer Surface. The first of these sections aims to familiarize the reader with essential background, in particular properties of Calabi-Yau 4-manifolds and stating the analytic prerequisites required for the later construction. In particular, we derive the complex Monge-Amp\`ere equation, given below, which will be essential for framing the analytic problem. Following this, we also give a summary of essential results regarding Sobolev spaces, H\"older spaces and elliptic theory.
\begin{center}
    $(\omega_0+2i\bar\partial\partial\phi)^2=\lambda\chi\wedge\bar\chi$
\end{center}
In the second section, we provide multiple formulations for the Eguchi-Hanson space. This is essential as much of our analysis will revolve around deeply understanding this space. It is by using this space that we resolve the singularities which occur while constructing the Kummer surface. The formulation that will prove to be most useful is the one from the original 1978 paper, \cite{EHmetric}, by Eguchi and Hanson. We also provide a description of the same space via the Gibbons-Hawking ansatz, \cite{GH1979}, and show that these two descriptions are equivalent. This second description is for interest and plays no part in the analysis which follows.\\
\\
In the final section, we construct both the Kummer surface and our Calabi-Yau metric on this surface, as was first done by Topiwala, \cite{Topiwala} and then later both both Donaldson, \cite{Simpap} and Jiang \cite{jiang202}. The main new results in this article are found in this construction as we produce this result without conformal transformations to model spaces, as was done by Donaldson \cite{Simpap}, and also without weighted spaces, as was done by Jiang \cite{jiang202}. To construct the Kummer surface we take $\mathbf{T}^4/\{\pm1\}$ and resolve the singularities by 'replacing' them with copies of the Eguchi-Hanson space. Following this, we produce an approximate solution and then perturb it; this gives us a complex Monge-Amp\`ere equation to solve. Then by careful application of compact elliptic theory and precise choices of suitable H\"older and Sobolev spaces, we can find estimates for the terms in our Monge-Amp\`ere equation. Using these estimates, we then show that we can apply the Banach fixed point theorem to give us our final result. \\

\textbf{Acknowledgments:} I want to thank Daniel Platt for first suggesting that I study the Kummer surface and also for his support while I developed this paper from that initial investigation. I also want to thank Simon Donaldson for his suggestions and feedback as I came to finalise this paper. Finally, I want to thank my friends and family for their support while I completed this work part time and away from a formal university setting.

\chapter{Background}

\section{Calabi-Yau 4-Manifolds}

In order to construct a Calabi-Yau metric we have to first describe what properties such a metric must satisfy. In this section we provide a definition for a real 4 dimensional manifolds to be Calabi-Yau, \ref{CYdef}. We conclude by formulating the complex Monge-Amp\`ere also known as the Calabi-Yau equation, \ref{CYequation}, which will be the foundation for our analysis in later chapters.

\begin{defin}
    Take $M$ to be a compact Riemannian manifold with Riemannian metric $g$ and Levi-Civita connection $\nabla$ and $P_\gamma:T_pM\rightarrow T_pM$ parallel transport along a curve $\gamma$. We define the \textbf{Holonomy group} of $(M,g)$ at $p$, written $\text{Hol}_p(g)$, to be:
    \begin{align}
        \text{Hol}_p(g)=\{ P_\gamma : \gamma:[0,1] \rightarrow M \text{ piecewise smooth curve with } \gamma(0)=\gamma(1) \}
    \end{align} \cite[p.213, p.6]{Cman, introCY}
\end{defin}
\textbf{Note:} The definitions of parallel transport and Levi-Civita connection used above have been omitted as we are assuming them as standard, for reference these definitions can be found in \cite{doca} if required.

\begin{defin} \label{CYdef}
    A 4-dimensional K\"ahler manifold is \textbf{Calabi-Yau} if $\text{Hol}(g)\subseteq SU(2)$, where $g$ is the associated Riemannian metric. \cite[p.7]{introCY}
\end{defin}

\textbf{Note:} This is not the only definition for Calabi-Yau manifolds for more information, see \cite{introCY}. The below proposition which we won't prove also could be used as a equivalent definition

\begin{prop} \label{Ballmann}
    For a K\"ahler manifold of complex dimension $m$, $\text{Hol}(g)\subseteq SU(m)$ iff the canonical bundle, $K_M=\Lambda^{(m,0)}T^*M$, admits a nowhere vanishing parallel holomorphic $(m,0)-$ form. \cite[p.57]{Ballmann}
\end{prop}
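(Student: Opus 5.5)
The plan is to use the holonomy principle: for a connected Riemannian manifold $(M,g)$ with Levi-Civita connection $\nabla$, parallel sections of any bundle associated to $TM$ correspond exactly to vectors in the fibre at $p$ that are fixed by $\text{Hol}_p(g)$. A parallel section $s$ satisfies $P_\gamma s(\gamma(0))=s(\gamma(1))$, so its value at $p$ is fixed by every loop. Conversely, given an invariant vector $v$ at $p$, define $s(q)=P_\gamma v$ for any path $\gamma$ from $p$ to $q$. Invariance makes this independent of $\gamma$, and $s$ is then parallel. Since $g$ is Kähler, $\nabla J=0$, so $\text{Hol}_p(g)\subseteq U(m)$ acting on $T_pM\cong\mathbb{C}^m$. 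Moreover $\nabla$ preserves the type decomposition of forms, so it induces a connection on $K_M=\Lambda^{(m,0)}T^*M$. By the principle, $\text{Hol}_p$ acts on the fibre $\Lambda^{(m,0)}T_p^*M\cong\mathbb{C}$.

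The key linear-algebra step is to compute this action. If $A\in U(m)$ and $\theta^1,\dots,\theta^m$ is a unitary coframe, then $A$ acts on $\theta^1\wedge\dots\wedge\theta^m$ by multiplication by $\det(A)^{-1}=\overline{\det A}$. So a nonzero element of the fibre is fixed by $A$ if and only if $\det A=1$. It follows that $\text{Hol}_p(g)\subseteq SU(m)$ holds exactly when $\text{Hol}_p$ fixes a nonzero $(m,0)$-covector at $p$.

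The two directions are then as follows.
\begin{enumerate}[label=(\roman*)]
\item If $\text{Hol}\subseteq SU(m)$, choose any nonzero $\Omega_p$ and transport it to get a parallel section $\Omega$ of $K_M$. Parallel transport is an isometry, so $|\Omega|$ is constant and nonzero, and $\Omega$ is nowhere vanishing.
\item If a nowhere vanishing parallel $\Omega$ exists, its value at $p$ is a nonzero vector fixed by $\text{Hol}_p$, so every element of $\text{Hol}_p$ has determinant $1$.
\end{enumerate}

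It remains to check holomorphicity in (i). For an $(m,0)$-form, $\bar\partial\Omega$ is the $(m,1)$ part of $d\Omega$. Because $\nabla$ is torsion free, $d\Omega$ is the antisymmetrisation of $\nabla\Omega=0$, so $d\Omega=0$ and in particular $\bar\partial\Omega=0$. Hence $\Omega$ is holomorphic.

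The main obstacle is making the holonomy principle rigorous: path-independence of the transported section and smoothness of the resulting section. I would handle this by working with the induced connection on associated bundles and invoking the standard fact that parallel transport is smooth in the endpoint. I also assume $M$ is connected, so that $\text{Hol}_p$ is independent of $p$ up to conjugation.
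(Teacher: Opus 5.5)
Your proof is correct. The paper gives no argument of its own for this proposition: it states it as a known fact (``which we won't prove'') and cites Ballmann. There is therefore nothing in the text to match your argument against, and what you have written is the standard holonomy-principle proof.

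Your key linear-algebra step is that $A\in U(m)$ acts on the line $\Lambda^{(m,0)}T_p^*M$ by $\overline{\det A}$, so the stabiliser of any nonzero element is exactly $SU(m)$. The paper uses this without justification in the converse half of its next theorem (Calabi--Yau iff Ricci flat), where it asserts that the stabiliser of the holomorphic volume form in $U(n)$ is $SU(n)$. Your computation is the missing justification.

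Your proposal also makes explicit two points the statement leaves implicit:
\begin{enumerate}[label=(\roman*)]
\item Holomorphicity comes for free. A parallel form is closed because $\nabla$ is torsion-free, so $\bar\partial\Omega=(d\Omega)^{(m,1)}=0$. The reverse implication therefore needs only a nonzero parallel section of $K_M$.
\item $|\Omega|$ is constant, which is the fact the paper relies on in the forward half of that same theorem.
\end{enumerate}

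In a write-up, state carefully the points you already flag, namely connectedness and smoothness of the transported section. Also note that the invariance you use is under the full holonomy group (all loops, as in the paper's definition), which is what makes the transported section globally well defined. Finally, since $SU(m)$ is normal in $U(m)$, the condition $\text{Hol}(g)\subseteq SU(m)$ does not depend on the choice of unitary frame at $p$.
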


\begin{defin}
    For $(M,g)$ a real 4-dimensional Riemannian manifold, if $g$ is K\"ahler with respect to 3 different complex structures $I,J,K$ and these structures satisfy $IJ=-K$, $JK=-I$ and $KI=-J$. Then we call $I,J,K$ a \textbf{hyper-K\"ahler triple} or a \textbf{hyper-K\"ahler structure}. \cite[p.149]{JoyceBook}
\end{defin}

\begin{theorem}\label{HperkIJK}
    A real 4-dimensional manifold, $M$, with a hyper-K\"ahler structure, $I,J,K$, has $\text{Hol}(g)\subseteq SU(2)$, so $M$ is Calabi-Yau.
\end{theorem}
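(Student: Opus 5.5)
The plan is to produce a nowhere vanishing parallel holomorphic $(2,0)$-form with respect to $I$ and then invoke Proposition \ref{Ballmann} with $m=2$. Let $\omega_I,\omega_J,\omega_K$ be the Kähler forms of $g$ with respect to $I,J,K$, so $\omega_I(X,Y)=g(IX,Y)$ and similarly for the others. Since $g$ is Kähler for each of these structures, each form is closed and, more importantly, $\nabla I=\nabla J=\nabla K=0$. It follows that $\nabla\omega_I=\nabla\omega_J=\nabla\omega_K=0$, because $g$ is parallel and the Levi-Civita connection commutes with contractions. The candidate form is
\begin{align}
    \Omega=\omega_J+i\,\omega_K .
\end{align}
It is parallel as a linear combination of parallel forms with constant coefficients, and in particular it is closed.

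The key step is to show that $\Omega$ has type $(2,0)$ with respect to $I$. A $2$-form $\alpha$ is of type $(2,0)$ exactly when $\alpha(IX,Y)=i\,\alpha(X,Y)$ for all $X,Y$. I would check this pointwise using the quaternion relations.
\begin{itemize}
    \item First compute $\omega_J(IX,Y)=g(JIX,Y)$. From $IJ=-K$ and the fact that the three structures anticommute, $JI=K$, so this equals $g(KX,Y)=\omega_K(X,Y)$.
    \item Similarly, $\omega_K(IX,Y)=g(KIX,Y)$, and using $KI=-J$ this equals $-\omega_J(X,Y)$.
    \item Hence $\Omega(IX,Y)=\omega_K(X,Y)-i\,\omega_J(X,Y)=-i\,\Omega(X,Y)$.
\end{itemize}
This shows $\Omega$ is of pure type, either $(2,0)$ or $(0,2)$, depending on the sign conventions the paper fixes (its relations $IJ=-K$ are the "reversed" orientation). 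If it comes out $(0,2)$, I replace $\Omega$ by $\bar\Omega=\omega_J-i\,\omega_K$, which is then of type $(2,0)$ and still parallel. Tracking these signs carefully, including deriving the anticommutation from $I^2=J^2=K^2=-1$ and the given products, is the main fiddly point.

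A parallel $(2,0)$-form is closed, so $\bar\partial\Omega=0$ and $\Omega$ is holomorphic. To see that it is nowhere vanishing, note that $\omega_J$ is nondegenerate, since $\omega_J^2$ is a multiple of the volume form. Alternatively, $\Omega\wedge\bar\Omega=\omega_J^2+\omega_K^2=2\,\mathrm{vol}_g\neq0$ up to a constant. In any case, a parallel form that is nonzero at one point is nonzero everywhere, because parallel transport is an isometry. Proposition \ref{Ballmann} then gives $\mathrm{Hol}(g)\subseteq SU(2)$, and $M$ is Calabi-Yau by Definition \ref{CYdef}, since it is Kähler with respect to $I$.

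As a sanity check, one can argue directly instead. Holonomy preserves $I,J,K$, so it lies in the centralizer of this quaternionic structure in $SO(4)$, which is $Sp(1)\cong SU(2)$. This alternative route avoids Proposition \ref{Ballmann} but requires identifying that centralizer, so I would present the form-based argument.
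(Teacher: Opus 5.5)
Your argument is correct, but it takes a different route from the paper. Both proofs start from $\nabla I=\nabla J=\nabla K=0$. You quote this as the standard characterization of a Kähler metric, whereas the paper derives it from $d\omega_I=0$.

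\textbf{The paper's route.} It then argues directly on holonomy. Each $A\in\mathrm{Hol}_p(g)$ is orthogonal and commutes with $I_p,J_p,K_p$. Under $T_pM\cong\mathbf{H}$, such an $A$ is multiplication by a unit quaternion, giving $\mathrm{Hol}_p(g)\subseteq Sp(1)\cong SU(2)$. This is essentially the centralizer argument you relegate to a sanity check.

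\textbf{Your route.} You package $J$ and $K$ into the single complex $2$-form $\omega_J\pm i\,\omega_K$. A pointwise type computation shows it is a parallel (hence closed, hence holomorphic), nowhere vanishing $(2,0)$-form for $I$. You then invoke Proposition~\ref{Ballmann}.

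\textbf{What each buys.} The paper's argument is elementary linear algebra at one point and does not lean on the unproved Proposition~\ref{Ballmann}. However, it leaves implicit that this $Sp(1)$ sits inside $U(2)$ relative to $I$ as the special unitary group; the quoted isomorphism $Sp(1)\cong SU(2)$ is only abstract. Your argument settles this automatically, because Proposition~\ref{Ballmann} is applied with respect to $I$. It also exhibits the holomorphic volume form that the paper later relies on, in Proposition~\ref{Omega} and as $\chi$ in the Monge--Amp\`ere equation.

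\textbf{The sign hedge.} Your hedge reflects a genuine ambiguity in the paper's conventions.
\begin{itemize}
\item Reading $IJ=-K$ literally as composition gives $JI=K$ and $KI=-J$, so $\Omega(IX,Y)=-i\,\Omega(X,Y)$. With the convention $\beta(IX)=i\beta(X)$ for $(1,0)$-forms, $\bar\Omega=\omega_J-i\,\omega_K$ is then the $(2,0)$-form.
\item The explicit structures in Definition~\ref{Hyperk} actually satisfy $I\circ J=K$. Under that relation $\omega_J+i\,\omega_K$ itself is $(2,0)$, which matches Proposition~\ref{Omega}.
\end{itemize}
Since both forms are parallel, nothing is lost either way, but you should fix one convention explicitly rather than leave the choice open.
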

\textbf{Proof:} Denote the K\"ahler forms associated to almost complex structures $I,J$ and $K$ by $\omega_I, \omega_J$ and $\omega_K$. So for $X,Y\in TM$ and g the Riemannian metric we have:
\begin{align}
    \omega_I(X,Y)=g(IX, Y)
\end{align}
and analogous results for $J$ and $K$. Now taking $\nabla$ to be the Levi-Civita connection and $X,Y,Z\in TM$ we get the following relationship:
\begin{align}
    (\nabla_X\omega_I)(Y,Z)=X(\omega_I(Y,Z))-\omega_I(\nabla_X Y,Z)-\omega_I(Y,\nabla_X Z)=g((\nabla_XI)Y, Z)
\end{align}
Now consider the following, recalling that $\omega_I$ is closed:
\begin{align}
    \begin{split}
    0&=d\omega_I(X,Y,Z)=(\nabla_X\omega_I)(Y,Z)+(\nabla_Y\omega_I)(Z,X)+(\nabla_Z\omega_I)(X,Y)\\
    &=g((\nabla_XI)Y,Z)+g((\nabla_YI)Z,X)+g((\nabla_ZI)X,Y)\\
   &=g((\nabla_XI)Y,Z)-g((\nabla_YI)X,Z)+g((\nabla_ZI)X,Y)\\
   &=g((\nabla_XI)Y,Z)-g((\nabla_XI)Y,Z)+g((\nabla_XI)Z,Y)\\
   &=g((\nabla_XI)Z,Y)
    \\\implies \nabla_XI&=0, \forall X\in TM
    \end{split}
\end{align}
To manipulate the above expression I used that $I$ is integrable so we have $(\nabla_XI)Y=(\nabla_Y I)X$ and that $g((\nabla_X I)Y,Z)=-g((\nabla_XI)Z,Y)$. Observe that this clearly applies to the other complex structures, so $\nabla I=\nabla J=\nabla K=0$, this then gives us $\nabla\omega_I=\nabla\omega_J=\nabla\omega_K=0$, so these are parallel 2-forms. These forms are preserved by parallel transport and so also by the holonomy representation. Now fix a point, $p\in M$, and take $A\in \text{Hol}_p(g)$. As we have shown that $I,J, K$ are parallel, we have that:
\begin{align}
    I_p\circ A=A\circ I_p, J_p\circ A=A\circ J_p, K_p\circ A=A\circ K_p
\end{align}
Where $I_p, J_p, K_p$ are the complex structures at $p$. Due to the relationships between $I_p, J_p, K_p$, following from the relationships between $I, J, K$, we have a natural identification between $T_pM$ with the quaternions, $\textbf{H}$, taking $I_p, J_p$ and $K_p$ to $i, j$ and $k$ for a standard quaternion written as $q=a+bi+cj+dk, a,b,c,d\in \mathbf{R}$. So we have $\text{Hol}_p(g)$ is a subset of quaternions and by the orthogonality and the commutativity of the complex structures we have that $A$ identifies with a quaternion, $q\in \textbf{H}$, such that $|q|=1$, so we have:
\begin{align}
    \text{Hol}_p(g)\subseteq Sp(1)\cong SU(2)
\end{align}
Where $Sp(1)=\{q\in\textbf{H}:|q|=1\}$ and the isomorphism between $Sp(1)$ and $SU(2)$ is standard. From this the result follows. \qed

\begin{defin}
   Let $(X,g)$ be an n dimensional K\"ahler manifold, with metric $g$, the \textbf{Ricci curvature}, $\text{Ric}(u,v)$, is defined as $\text{Ric}(u,v)=\sum_{i=1}^ng(R(e_i, u)v, e_i)$. \cite[p.211]{Cman}
\end{defin}

\textbf{Note:} Above $R(u,v)$ denotes the curvature endomorphism, i.e: $R(u,v)=\nabla_u\nabla_v-\nabla_v\nabla_u-\nabla_{[u,v]}$.

\begin{defin}
  A manifold, $X$, is \textbf{Ricci flat} if $\text{Ric}(u,v)=0, \forall u,v\in T_pX$ at every $p\in X$. \cite[p.211]{Cman}
\end{defin} 

\begin{theorem}
    A simply connected K\"ahler manifold, $M$, of complex dimension $m$, is Calabi-Yau iff it is Ricci flat. \cite{CYric}
\end{theorem}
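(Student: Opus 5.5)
The plan is to use Proposition \ref{Ballmann}, which identifies $\text{Hol}(g)\subseteq SU(m)$ with the existence of a nowhere vanishing parallel holomorphic $(m,0)$-form. It then suffices to show that, on a simply connected K\"ahler manifold, such a form exists iff $\text{Ric}=0$. The bridge between the two is the curvature of the canonical bundle $K_M=\Lambda^{(m,0)}T^*M$. Its Hermitian metric and Chern connection are induced by $g$ through the Levi-Civita connection; the Chern connection agrees with the induced Levi-Civita connection because $g$ is K\"ahler, so $\nabla J=0$. The first step is the standard identity that the curvature of this induced connection on $K_M$ is $i\rho$, up to sign and normalisation, where $\rho(X,Y)=\text{Ric}(JX,Y)$ is the Ricci form. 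I would derive it by taking a local unitary frame $e_1,\dots,e_m$ of $T^{1,0}M$, writing $\nabla$ as a $\mathfrak{u}(m)$-valued connection form, and noting that the induced connection on $\Lambda^m(T^{1,0})^*$ has connection form minus the trace. Its curvature is therefore $-\text{tr}\,R$, and contracting with the K\"ahler symmetries of $R$, together with the first Bianchi identity, gives $\text{tr}_{\mathbf{C}}R(X,Y)=i\,\text{Ric}(JX,Y)$.

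For ``Calabi-Yau $\Rightarrow$ Ricci flat'', Proposition \ref{Ballmann} gives a nowhere vanishing parallel section $\Omega$ of $K_M$. A line bundle with a nonvanishing parallel section is flat: $R(X,Y)\Omega=0$ with $\Omega\neq0$ forces the curvature $2$-form to vanish. By the identity above $\rho=0$, and hence $\text{Ric}=0$. This direction needs neither simple connectivity nor compactness. Alternatively, $\text{Hol}\subseteq SU(m)$ means the curvature, which lies in the holonomy algebra by Ambrose--Singer, is traceless, and tracelessness is exactly $\rho=0$.

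For the converse, $\text{Ric}=0$ makes the induced connection on $K_M$ flat. Simple connectivity then enters: a flat connection on a line bundle over a simply connected manifold has trivial holonomy. Parallel transport along any two homotopic paths agrees, and every loop is null-homotopic. So I fix a nonzero $\Omega_p\in K_{M,p}$ and transport it to every point, obtaining a well-defined global parallel section $\Omega$. It is nowhere vanishing because parallel transport is a linear isometry. It is holomorphic because it is parallel for the Chern connection, whose $(0,1)$-part is $\bar\partial$, so $\bar\partial\Omega=0$. Proposition \ref{Ballmann} then gives $\text{Hol}(g)\subseteq SU(m)$. Equivalently, the restricted holonomy lies in $U(m)$ with traceless Lie algebra, hence in $SU(m)$, and simple connectivity makes $\text{Hol}=\text{Hol}^0$.

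I expect the main obstacle to be the curvature identity relating $\text{tr}\,R$ to the Ricci form: tracking the constants, the signs, and the use of the Bianchi identity to turn $\sum_i g(R(X,Y)e_i,Je_i)$ into $2\,\text{Ric}(X,JY)$. I would do this computation in a unitary frame first. The other steps are formal once this identity is in hand.
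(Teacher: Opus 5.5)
Your proposal is correct and follows essentially the same route as the paper. In both, the Ricci form is identified with the curvature of $K_M$; a nonvanishing parallel section forces flatness in one direction, and flatness plus simple connectivity produces a parallel volume form, hence holonomy in $SU(m)$, in the other. The differences are minor: you derive the curvature identity from the trace of $R$ in a unitary frame via the Bianchi identity, whereas the paper obtains it as $\rho=-i\partial\bar\partial\log\det(g_{i\bar j})$ from the line-bundle formula $F=-\partial\bar\partial\log h$. You also check that the parallel section is holomorphic, which the paper bypasses by noting directly that the stabiliser of the volume form in $U(m)$ is $SU(m)$.
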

\textbf{Proof:} Recall the following $\text{Ric}(u,v)=\sum_{i=1}^ng(R(e_i, u)v, e_i)$ for $u,v\in T_pM$ we can then define the following $(1,1)$-form $\rho(u,v)=\text{Ric}(Iu,v)$, where $I$ is the complex structure as usual. So $\rho(u,v)=0$ iff $\text{Ric}(u,v)=0$, this is why $\rho$ is sometimes called the Ricci form. We are now going to make the following claim which I will prove later:
\begin{align}
    \rho=-i\partial\bar\partial \log\det(g_{i\bar{j}})
\end{align}
With this claim, we consider the canonical bundle, $K_M$. The Ricci form $\rho$ is the curvature of the canonical bundle $K_M$, this is a standard result, see \cite[p.203]{Cman}. If the holonomy is contained in $SU(m)$, then $K_M$ admits a parallel holomorphic volume form on $K_M$ by \ref{Ballmann}, $\Omega$. So we have the following, for a constant $k$:
\begin{align}
    \text{det}(g_{i\bar{j}})=k\|\Omega\|^2 \implies \rho=-i\bar\partial\partial \log (k\|\Omega\|^2)=0
\end{align}
So we have shown one direction of our claim. The other implication is as follows. $Hol_p(g) \subset U(n)$ by \cite[p.57]{Ballmann}. If $M$ is Ricci flat, then the curvature on the canonical bundle is zero. By the holonomy principle, a flat bundle over a simply connected manifold admits a parallel section. Hence, there exists a parallel volume form. So we have that $Hol_p(g)$  is contained in the stabiliser of the holomorphic volume form at $p$ within $U(n)$, which is $SU(n)$. All that remains is to show our claim. To show this, consider that we know that the curvature of a Hermitian line bundle with metric $h$ is given by $F=-\partial\bar\partial \log(h)$ \cite[p.186]{Cman} and we have that $\det(g_{ij})$ is a Hermitian metric on the canonical line bundle and that the Ricci curvature is the curvature of this line bundle. To see this, consider the following:
\begin{align}
    \begin{split}
    \rho(u,v)&=\sum_{i=1}^ng(R(e_i, Iu)v, e_i)=\sum_{i=1}^ng(J(R(e_i, u)v), e_i)=\sum_{i=1}^n\omega(R(e_i, u)v, e_i)\\&
    =i\sum_{i,j=1}^nR_{i\bar{j}}dz_i\wedge d\bar{z}_j=iF=-i\partial\bar\partial \log\det(g_{i\bar{j}})
    \end{split}
\end{align} \qed



\begin{theorem} \label{CYequation}
    A complex $2$-dimensional K\"ahler manifold, $M$, with K\"ahler metric, $\omega$. If $\omega$ is Calabi-Yau then the following equation holds, $\omega^2=\lambda\chi\wedge\chi$, where $\lambda>0$ and $\chi$ is a nowhere vanishing holomorphic 2-form.  \cite{Simpap}
\end{theorem}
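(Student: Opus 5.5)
The plan is to read ``Calabi-Yau'' through Definition~\ref{CYdef} and Proposition~\ref{Ballmann}, and then compare two top-degree forms, both of which are parallel. The statement should be read as $\omega^2=\lambda\chi\wedge\bar\chi$, since $\chi\wedge\chi=0$ for a $(2,0)$-form; this matches the equation in the introduction.

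First, since $\text{Hol}(g)\subseteq SU(2)$, Proposition~\ref{Ballmann} gives a nowhere vanishing parallel holomorphic $(2,0)$-form $\chi$ on $M$. Then $\chi\wedge\bar\chi$ is a real multiple of a nowhere vanishing real $(2,2)$-form, i.e.\ a volume form. It is also parallel, because $\nabla$ commutes with conjugation and satisfies the Leibniz rule for wedge products.

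Second, $\omega^2=2\,\mathrm{vol}_g$ is nowhere vanishing. It is parallel because $\nabla\omega=0$ for a K\"ahler metric; this was shown in the proof of Theorem~\ref{HperkIJK} for a single complex structure, and that part of the argument used only closedness and integrability. Both $\omega^2$ and $\chi\wedge\bar\chi$ are sections of the real line bundle $\Lambda^4T^*M$, and $\chi\wedge\bar\chi$ is nonvanishing. Hence we can write $\omega^2=f\,\chi\wedge\bar\chi$ for a smooth real nowhere zero function $f$.

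Third, differentiate this relation. Since both forms are parallel, $0=\nabla(\omega^2)=df\otimes\chi\wedge\bar\chi$, so $df=0$ and $f$ is locally constant. It is constant on each connected component, and constant overall if $M$ is connected, which I would assume. For the sign, in local holomorphic coordinates write $\chi=h\,dz_1\wedge dz_2$. Then $\chi\wedge\bar\chi=|h|^2\,dz_1\wedge dz_2\wedge d\bar z_1\wedge d\bar z_2$, and this is a positive multiple of the orientation form $\omega^2$, up to the standard factor of $i$'s and signs (in dimension 2, $\chi\wedge\bar\chi$ is itself positive). If the convention introduces a sign, I would replace $\chi$ by $i\chi$, or simply absorb the sign so that $\lambda>0$.

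The main obstacle is this sign and normalisation bookkeeping: checking with the paper's conventions that $\chi\wedge\bar\chi$ is positively oriented, so that $\lambda>0$. The remaining steps follow directly from the earlier results. A cleaner alternative route uses the Ricci form from the previous theorem. Ricci flatness gives $\partial\bar\partial\log(\omega^2/\chi\wedge\bar\chi)=0$, so the ratio is pluriharmonic, and on a compact $M$ it is therefore constant. I would mention this as a backup argument in case the reader prefers to avoid the parallel-transport argument.
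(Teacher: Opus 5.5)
Your proof is correct, but its main line differs from the paper's; the paper's proof is essentially your backup argument. The paper starts from Ricci-flatness, $\rho=-i\partial\bar\partial\log\det(g_{i\bar j})=0$, and concludes directly that $\log\omega^2$ and $\log(\chi\wedge\bar\chi)$ differ by a constant. That route works for an arbitrary nowhere vanishing holomorphic $2$-form $\chi$. It tacitly uses that $\log|h|^2$ is pluriharmonic when $\chi=h\,dz_1\wedge dz_2$. It also needs compactness of $M$, which is not in the statement, to pass from pluriharmonic to constant. Your route goes straight from the holonomy definition, via Proposition~\ref{Ballmann}, to a parallel $\chi$, and gets $df=0$ from $\nabla\omega^2=0=\nabla(\chi\wedge\bar\chi)$. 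This needs only connectedness and no Ricci-form computation. The price is that it proves the identity only for that parallel $\chi$. To transfer it to another holomorphic volume form, such as the $dz_1\wedge dz_2$ used later on $Z$, you need compactness after all. The ratio of two such forms is a nowhere vanishing holomorphic function, which is constant only when $M$ is compact and connected. On flat $\mathbf{C}^2$, for example, $\chi=e^{z_1}dz_1\wedge dz_2$ gives a nonconstant ratio. One small slip: replacing $\chi$ by $i\chi$ cannot fix a sign, since $c\chi\wedge\overline{c\chi}=|c|^2\chi\wedge\bar\chi$ for any constant $c$. The fix is unnecessary anyway, because $dz_1\wedge dz_2\wedge d\bar z_1\wedge d\bar z_2=(i\,dz_1\wedge d\bar z_1)\wedge(i\,dz_2\wedge d\bar z_2)$ is positively oriented, so $\lambda>0$ holds outright in complex dimension $2$.
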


\textbf{Proof:} First, we observe that both $\omega^2$ and $\chi\wedge\bar\chi$ are both volume forms for this manifold, so we can write:
\begin{align}
    \int_M\omega^2=\lambda\int_M\chi\wedge\bar\chi
\end{align}
As we want Ricci flat, we want that $\rho=-i\partial\bar\partial \log\det(g_{i\bar{j}})=0$. We also have that $\det(g_{i\bar{j}})=\omega^2$. Putting these together, we have:
\begin{align}
    \begin{split}
    0&=\rho=-i\partial\bar\partial\log\omega^2\\
    \implies \log(\omega^2)&=\log(\chi\wedge\bar\chi)+\text{constant}\\
    \implies \omega^2&=\lambda\chi\wedge\bar\chi
    \end{split}
\end{align}
\qed

\textbf{Note:} The equation $\omega^2=\lambda\chi\wedge\bar\chi$ is sometimes known as the \textbf{Calabi-Yau equation} or the \textbf{complex Monge-Amp\`ere equation}. \cite{Simpap}

\section{Analytic Prerequisites}

Here we provide the analytic setting for the construction, namely: Sobolev spaces, H\"older spaces and compact elliptic theory. We also state some known results which will prove essential going forward. We then provide a refinement of the Schauder estimate, \ref{improveSch}, in the particular setting which we will find ourselves when considering the Kummer surface.

\begin{defin}
    We define the \textbf{Sobolev norm}, $\|.\|_{L^q_k}$ on a Riemannian manifold $(M,g)$, to be:
    \begin{align}
        \|f\|_{L^q_k}=\left(\sum_{j=0}^k\int_M|\nabla^jf|^qdV_g\right)^{1/q}
    \end{align}
    where $dV_g$ is the volume form of the metric $g$. \cite[p.5]{JoyceBook}
\end{defin}

\begin{defin}
    We then define a \textbf{Sobolev space} on a Riemannian manifold $(M,g)$, $L^q_k(M)$ to be the completion of the space $C^\infty_c(M)$, or $C^\infty(M)$ for a compact manifold, with respect to the Sobolev norm. \cite[p.216]{Sobolev}\\
    \\
    There is also an equivalent definition using $\nabla^i$ to denote the weak derivative in the natural way on a compact Riemannian manifold $M$:
    \begin{align}
        L^q_k(M)=\{f\in L^q(M):|\nabla^jf|\in L^q(M), \forall j\leq k\}
    \end{align} \cite[p.5]{JoyceBook}
\end{defin}

\textbf{Note:} $L^q_K(M)$ is a Banach space with respect to the associated Sobolev norm.

\begin{defin}
    For $(M,g)$ a Riemannian manifold we define \textbf{C$^k$-spaces} $C^k(M)$, for $0\leq k\in\mathbf{Z}$, to be the space of continuous bounded functions with $k$ continuous bounded derivatives. On this space, we define the norm $\|.\|_{C^k(M)}$ to be:
    \begin{align}
        \|f\|_{C^k(M)}=\sum_{j=0}^k\sup_M|\nabla^j f|, 
    \end{align}
    where $\nabla$ is the Levi-Civita connection. \cite[p.5]{JoyceBook}
\end{defin}

\begin{defin}
    For $M$ a Riemannian manifold with Riemannian metric $g$ we define \textbf{H\"older spaces} $C^{k,\alpha}(M)$, $0\leq k\in\mathbf{Z}, \alpha\in(0,1)$ as the space of function $f\in C^k(M)$ for which the following supremum exists:
    \begin{align}
        [\nabla^k f]_\alpha=\underset{\underset{d(x,y)<\delta(g)}{x\neq y\in M}}{\sup}\frac{|\nabla^kf(x)-\nabla^kf(y)|}{d(x,y)^\alpha}
    \end{align}
    where $\delta(g)$ denotes the injectivity radius of $g$ over $M$ and we identify $\nabla^kf(x)$ with $\nabla^kf(y)$ for $x\neq y$ using parrallel transport along the unique geodesic from $x$ to $y$. This space equipped with the following \textbf{H\"older norm}:
    \begin{align}
        \|f\|_{C^{k,\alpha}(M)}=\|f\|_{C^k(M)}+[\nabla^kf]_\alpha
    \end{align}
    gives us our \textbf{H\"older space}. \cite[p.6]{JoyceBook}
\end{defin}

\begin{defin}
    A linear map, $P$, between sections of vector bundles $V$ and $W$ over a manifold, $M$, which we can write as:
    \begin{align}
        Pu=A^{i_1...i_k}\nabla_{i_1, ..., i_k}u+B^{i_1, ..., i_{k-1}}\nabla_{i_1, ..., i_{k-1}}u+...+K^{i_1}\nabla_{i_1}u+L(u)
    \end{align}
    where $A,B, ..., K$ are symetric tensors and $L$ is a real function on M. $P$ is an \textbf{elliptic operator} of rank $k$, if $\forall x\in M$ and each non-zero $\eta\in T_x^*M$ the map $\sigma_\eta(P,x)=A^{1, ..., k}\eta_{1}...\eta_k$ is invertible. Here $\eta_i$ are components of the covector in coordinates.
    \cite[p.9]{JoyceBook}
\end{defin}


\begin{theorem}
   The following are standard results which will be used later but which for the sake of brevity won't be proven here:
   \begin{enumerate}


       \item \label{Poincare} \textbf{Poincar\'e inequality}: For a compact manifold $M$, for $u\in C^2(M)$ such that $\int_M u=0$, there exists $c\in \mathbf{R}^+$ such that:
       \begin{align}
           \|u\|_{L^2}\leq c\|\nabla u\|_{L^2}
       \end{align}
       We also have $c= \lambda_1^{-1}$, where $\lambda_1$ is the minimal non-zero eigen value of the Laplacian.
    \cite[p.290]{Sobolev}
    \item \label{Bochner} \textbf{Integrated Bochner formula}: for a compact Riemannina manifold $(M,g)$ with non-negative Ricci curvature we have the following relationship:
    \begin{align}
        \begin{split}
        \int_M(\Delta u)^2 dV&=\int_M(|\nabla^2u|^2+\text{Ric}(\nabla u, \nabla u))dV\\
        \implies \int_M|\nabla^2u|^2dV&\leq\int_M(\Delta u)^2 dV
        \end{split}
    \end{align}

    \cite[p.338]{bochner}
    \item \label{LiYau} \textbf{Li-Yau estimate}: For a compact Riemannian manifold the first non-zero eigenvalue of the Laplacian are bounded by a constant which only depends on diameter and the Ricci curvature. \cite[Theorem 10]{LiYau}
    
    \item \label{schauder}\textbf{Schauder estimate}: Let $B_1 \subset B_2$ be balls in $\mathbf{R}^n$ and $\Delta$ be the Laplacian. Then for $\alpha\in(0,1)$, $f=\Delta u$ and a constant $C$ we have the following estimate:
    \begin{align}
        \|u\|_{C^{2,\alpha}(B_1)}\leq C(\|f\|_{C^{0,\alpha}(B_2)}+\|u\|_{C^0(B_2)})
    \end{align} \cite[p.15]{JoyceBook}

    \item \label{CZestimate} \textbf{Calderon-Zygmund}: For bounded open sets $\Omega'\subset \subset \Omega\subset \mathbf{R}^n$, $u\in L^p_2(\Omega)$ and $\Delta$ the Laplacian over $\Omega$, we have the following inequality where the constant $C$ depends on $\Omega, \Omega', n$ and $p$:
    \begin{align}
        \|u\|_{L^p_2(\Omega')}\leq C(\|\Delta u\|_{L^p(\Omega)}+\|u\|_{L^p(\Omega)})
    \end{align}
\cite[p.235]{CZestimate}
    
\end{enumerate}
\end{theorem}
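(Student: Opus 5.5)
Since these are classical results, the plan is to prove each item from more basic facts in the order listed. Throughout, $M$ is a compact Riemannian manifold and $\Delta$ is the (analyst's, non-positive) Laplacian, so that $\int_M u\,\Delta v\,dV=-\int_M\langle\nabla u,\nabla v\rangle dV$. The two analytic pillars I will take as given are the spectral theorem for $\Delta$ on a compact manifold and the $L^p$-boundedness of singular integral operators.

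For the \textbf{Poincar\'e inequality} I will use the spectral theorem. The operator $-\Delta$ has an $L^2$-orthonormal eigenbasis $\{\phi_k\}$ with eigenvalues $0=\lambda_0<\lambda_1\le\lambda_2\le\dots$, and $\phi_0$ is constant. Write $u=\sum_k a_k\phi_k$. The hypothesis $\int_M u=0$ forces $a_0=0$. Integrating by parts gives
\begin{align}
\|\nabla u\|_{L^2}^2=\sum_{k\ge1}\lambda_k a_k^2\ge\lambda_1\sum_{k\ge1}a_k^2=\lambda_1\|u\|_{L^2}^2 .
\end{align}
This yields the inequality with $c=\lambda_1^{-1/2}$, which is the sharp constant; the $\lambda_1^{-1}$ written in the statement should read $\lambda_1^{-1/2}$, and I would flag this.

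For the \textbf{integrated Bochner formula} I will start from the pointwise identity
\begin{align}
\tfrac12\Delta|\nabla u|^2=|\nabla^2u|^2+\langle\nabla u,\nabla\Delta u\rangle+\text{Ric}(\nabla u,\nabla u).
\end{align}
This identity follows by commuting covariant derivatives, $\nabla_i\nabla_j\nabla_j u=\nabla_j\nabla_i\nabla_j u-R_{ij}\nabla_j u$ up to sign convention. Next I integrate over $M$. The left side vanishes by the divergence theorem, and $\int_M\langle\nabla u,\nabla\Delta u\rangle=-\int_M(\Delta u)^2$. Rearranging gives the stated equality. The inequality then follows because $\text{Ric}(\nabla u,\nabla u)\ge0$.

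For the \textbf{Schauder} and \textbf{Calder\'on--Zygmund} estimates I plan a common localisation argument. Take a cutoff $\eta$ equal to $1$ on $B_1$ (respectively on $\Omega'$) and supported in $B_2$ (respectively in $\Omega$). Represent $\eta u$ by the Newtonian potential of $\Delta(\eta u)=\eta f+2\nabla\eta\cdot\nabla u+u\Delta\eta$.
\begin{enumerate}
\item In the Schauder case, I use that second derivatives of the Newtonian potential are bounded $C^{0,\alpha}\to C^{0,\alpha}$. This is a direct kernel estimate exploiting the cancellation of $\partial_i\partial_j\Gamma$ on spheres.
\item In the Calder\'on--Zygmund case, I use that the same operator is bounded on $L^p$ for $1<p<\infty$. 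This follows from $L^2$-boundedness via Fourier transform, a weak $(1,1)$ bound via the Calder\'on--Zygmund decomposition, and Marcinkiewicz interpolation and duality.
\end{enumerate}
In both cases the lower-order term $\nabla u$ on the right is absorbed by interpolation, $\|\nabla u\|\le\epsilon\|\nabla^2u\|+C_\epsilon\|u\|$, applied on a family of nested balls.

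The main obstacle is the \textbf{Li--Yau estimate}. Here I would follow the gradient-estimate method. Let $\phi$ be a first eigenfunction, normalised so that $\sup\phi=1$ and $\inf\phi=-k\ge-1$. Apply the maximum principle, together with the Bochner formula above and the lower Ricci bound, to an auxiliary quantity such as $|\nabla\phi|^2/(1+a-\phi)^2$. This bounds $|\nabla\log(1+a-\phi)|$ by a multiple of $\sqrt{\lambda_1}$ plus a term depending on the Ricci bound. Integrating this bound along a minimal geodesic between a maximum point and a zero of $\phi$, of length at most the diameter $d$, then yields a lower bound for $\lambda_1$ depending only on $d$ and the Ricci lower bound. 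Choosing the auxiliary function correctly and controlling the Ricci term at the maximum point is the delicate step. If needed, I would simply cite \cite[Theorem 10]{LiYau} for the final inequality.
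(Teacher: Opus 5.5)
The paper gives no proof of this statement; it lists the five results and defers to references. Your proposal is therefore a genuinely different, self-contained route, and your sketches are the standard arguments and are sound.

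For Li--Yau, the step you call delicate closes as follows. With $v=\log(1+a-\phi)$, $a\le 1$ and $\text{Ric}\ge -K$, the maximum principle gives $|\nabla v|^2\lesssim_n \lambda_1/a+K$. You then choose $a$ small in terms of $d\sqrt{K}$, so that $\log(1+1/a)$ dominates the curvature term after integrating along the geodesic. This yields $\lambda_1\ge c_n d^{-2}e^{-C_n d\sqrt{K}}$.

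What your route buys over bare citation is precision about constants and hypotheses. You correctly caught that the sharp Poincar\'e constant is $\lambda_1^{-1/2}$. The stated $c=\lambda_1^{-1}$ is false whenever $\lambda_1>1$, as testing on a first eigenfunction shows.

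You should make the other silent hypotheses explicit in the same way:
\begin{itemize}
\item $M$ connected, since you use $\lambda_1>0$ with $\phi_0$ constant, and the inequality fails otherwise.
\item $\overline{B_1}\subset B_2$ for Schauder.
\item Dependence on $n$, and on only a \emph{lower} Ricci bound, in Li--Yau.
\item $1<p<\infty$ in Calder\'on--Zygmund.
\end{itemize}
The last point is exactly the range your singular-integral argument delivers, and it cannot be dropped, because the estimate fails at $p=1$ and $p=\infty$. This matters because Proposition~\ref{improveSch} invokes it for $p\in[1,\infty)$.

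Your derivation also shows that the Bochner \emph{equality} holds with no curvature sign condition. That is the usable form on $\omega_0$, whose Ricci curvature is $O(a^4)$ but need not be nonnegative on the gluing annuli. The citation approach buys brevity, but it inherits these imprecisions.
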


\begin{prop} \label{improveSch}
    We have the following improvement on \ref{schauder} for compact Riemannian manifolds of dimension $n$, for all $p\in [1,\infty)$ and $\alpha\in (0,1)$:
    \begin{align}
        \|{u}\|_{C^{2,\alpha}(B_1)} \lesssim \|{\Delta u}\|_{C^{0,\alpha}(B_2)}+\|{u}\|_{L^p(B_2)}
    \end{align}
\end{prop}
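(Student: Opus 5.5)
The plan is to replace the $C^0$ term in the Schauder estimate \ref{schauder} by an $L^p$ term, using an interpolation inequality together with a scaling (nested balls) argument. We work in a coordinate chart. $B_1\subset B_2$ are concentric geodesic balls well inside the injectivity radius, so the metric coefficients are uniformly controlled in $C^{1,\alpha}$ and \ref{schauder} holds for the Laplacian of $g$ with constants depending only on the geometry. For $1\le r<s\le 2$ write $B_r, B_s$ for the intermediate concentric balls. Rescaling \ref{schauder} from the pair $(B_1,B_2)$ to the pair $(B_r,B_s)$ gives, with $d=s-r$,
\begin{align}
\sum_{j\le 2} d^{j}\|\nabla^j u\|_{C^0(B_r)}+d^{2+\alpha}[\nabla^2u]_{\alpha,B_r}\le C\big(d^2\|\Delta u\|_{C^{0,\alpha}(B_s)}+\|u\|_{C^0(B_s)}\big).
\end{align}

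The key step is an interpolation inequality that bounds $\|u\|_{C^0}$ by a small multiple of the $C^{2,\alpha}$ norm plus a large multiple of the $L^p$ norm. Suppose $|u(x_0)|=\|u\|_{C^0(B_s)}=M$. If $[u]_{C^{0,1}}\le L$, then $|u|\ge M/2$ on a ball of radius $\rho=\min(M/2L,\,c)$ around $x_0$; intersected with $B_2$, this ball still has volume $\gtrsim\rho^n$, since the ball is a Lipschitz domain. Hence
\begin{align}
\|u\|_{L^p}\gtrsim M\rho^{n/p}.
\end{align}
Solving for $M$ gives $M\lesssim \epsilon L+C_\epsilon\|u\|_{L^p}$ for every $\epsilon>0$, by Young's inequality applied to $M^{1+n/p}\lesssim L^{n/p}\|u\|_{L^p}$. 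We apply this with $L$ the gradient bound on a slightly larger ball.

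To absorb the small term, we define the weighted seminorm
\begin{align}
\Phi=\sup_{1\le r<2}(2-r)^{2+\alpha}\|u\|_{C^{2,\alpha}(B_r)},
\end{align}
with each order of derivative weighted appropriately. This quantity is finite because $u$ is smooth (or $C^{2,\alpha}$) on the compact closure. Combining the rescaled Schauder estimate with the interpolation inequality, for suitable choices of $s=(r+2)/2$ and of $\epsilon$ in terms of $d$, yields
\begin{align}
\Phi\le \tfrac12\Phi+C\big(\|\Delta u\|_{C^{0,\alpha}(B_2)}+\|u\|_{L^p(B_2)}\big).
\end{align}
Absorbing $\tfrac12\Phi$ and evaluating at $r=1$ gives the claim.

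The main obstacle is getting the powers of $d$ to match in the absorption step. The interpolation produces a factor $C_\epsilon\sim\epsilon^{-n/p}$, which after rescaling becomes a negative power of $d$. This factor must be dominated by the weight $(2-r)^{2+\alpha}$, and we are free to be generous with it since any power of $d$ is a bounded constant once $r=1$. If this bookkeeping becomes messy, a fallback is a compactness/contradiction argument. Suppose there are $u_k$ with $\|u_k\|_{C^{2,\alpha}(B_1)}=1$ while $\|\Delta u_k\|_{C^{0,\alpha}(B_2)}+\|u_k\|_{L^p(B_2)}\to0$. Local estimates on nested balls, obtained by iterating Calderón–Zygmund \ref{CZestimate} and Sobolev embedding to bootstrap from $L^p$ up to $C^0$, give uniform bounds. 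Arzelà–Ascoli then produces a limit that is harmonic with zero $L^p$ norm, hence zero, and the uniform $C^{2,\alpha}$ convergence contradicts the normalization.
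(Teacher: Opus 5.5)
Your argument is correct, but it takes a genuinely different route from the paper's.

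\textbf{The paper's route.} The paper does no interpolation or absorption. It takes $B_1\subset B_{3/2}\subset B_2$ and applies Schauder on $(B_1,B_{3/2})$. It then bounds $\|u\|_{C^0(B_{3/2})}$ directly, using Calder\'on--Zygmund on $(B_{3/2},B_2)$ followed by the Sobolev embedding $L^p_2\hookrightarrow C^0$. It finishes with $\|\Delta u\|_{L^p(B_2)}\le C\|\Delta u\|_{C^{0,\alpha}(B_2)}$. This is very short, but it needs the $L^p$ elliptic theory. Because of the embedding, it only works for $p>n/2$ (and $p>1$ for Calder\'on--Zygmund). That is enough for the later use with $n=4$, $p=6$, but it does not cover the stated range $p\in[1,\infty)$.

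\textbf{Your route.} You need only Schauder and an elementary pointwise $C^0$--$L^p$ interpolation, and your argument really does cover every $p\in[1,\infty)$. The price is the weighted absorption, which requires $\Phi<\infty$. So you need $u\in C^{2,\alpha}(\overline{B_2})$, or an exhaustion by slightly smaller outer balls.

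\textbf{The bookkeeping you flag.} The scaling weights $(2-r)^j$ by themselves do not close: with $\epsilon\sim d$ you are left with a term $d^{-n/p}\|u\|_{L^p}$. Put an extra factor $(2-r)^{n/p}$ in front,
\[
\Phi=\sup_{1\le r<2}(2-r)^{n/p}\Big(\sum_{j=1}^{2}(2-r)^{j}\|\nabla^j u\|_{C^0(B_r)}+(2-r)^{2+\alpha}[\nabla^2u]_{\alpha,B_r}\Big).
\]
Then take $s=(r+2)/2$, $d=(2-r)/2$, and $\epsilon$ a small multiple of $d$. The gradient term becomes $\tfrac12\Phi$, and the $L^p$ term becomes $d^{n/p}d^{-n/p}\|u\|_{L^p(B_2)}$, which is bounded. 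The $C^0$ part on $B_1$ then follows from the interpolation with $\epsilon=1$.

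\textbf{The compactness fallback.} It is not needed, and it is weaker than it looks. The Calder\'on--Zygmund/Sobolev bootstrap it relies on already gives the estimate directly; that is essentially the paper's proof. It is also unavailable at $p=1$. Finally, Arzel\`a--Ascoli only gives $C^2$ convergence, so one more application of Schauder is needed to contradict the $C^{2,\alpha}$ normalisation.
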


\textbf{Proof:} Here we use the symbol, $\lesssim$, to denote the following $x\lesssim y\implies x\leq Ky$ for some constant $K$. To begin this proof, consider three balls of radii $1, \frac{3}{2}$ and $2$ so we have $B_1\subset B_{3/2}\subset B_2$. From our initial estimate \ref{schauder} we have:
\begin{align}
    \|u\|_{C^{2,\alpha}(B_1)}\lesssim\|\Delta u\|_{C^{0,\alpha}(B_{3/2})}+\|u\|_{C^0(B_{3/2})}
\end{align}
Here  Now by applying the Calderon-Zygmund estimate, \ref{CZestimate}, we have:
\begin{align}
    \|u\|_{L^p_2(B_{3/2})}\lesssim \|\Delta u\|_{L^p(B_2)}+\|u\|_{L^p(B_2)}
\end{align}
And for $p>n/2$ we have $\|u\|_{C^0}\leq C\|u\|_{L^2_p}$, this is the standard Sobolev embedding theorem. Putting this together, we get:
\begin{align}
    \begin{split}
    \|u\|_{C^{2,\alpha}(B_1)}&\lesssim \|\Delta u\|_{C^{0,\alpha}(B_{3/2})}+\|u\|_{C^0(B_{3/2})} \lesssim \|\Delta u\|_{C^{0,\alpha}(B_{3/2})}+\|\Delta u\|_{L^p(B_2)}+\|u\|_{L^p(B_2)}\\
    &\lesssim  \|\Delta u\|_{C^{0,\alpha}(B_{3/2})}+\|\Delta u\|_{C^{0,\alpha}(B_2)}+\|u\|_{L^p(B_2)}\lesssim \|\Delta u\|_{C^{0,\alpha}(B_2)}+\|u\|_{L^p(B_2)}
    \end{split}
\end{align}
Where the last inequality follows from $\|u\|_{L^p}\leq \|u\|_{C^0}\leq \|u\|_{C^{0,\alpha}}$. So we have the desired result \qed

\chapter{The Eguchi-Hanson Space}

\section{The Eguchi-Hanson Space in Spherical Coordinates}

When constructing the Kummer surface we will have to resolve a number of singular points; to do this we will find that replacing the region around them with the Eguchi-Hanson space is the correct way to proceed. So, it is essential to understand this space in detail. In this section we will formulate the Eguchi-Hanson in spherical coordinates, as given in its first conception \cite{EHmetric}, we also provide a calculation for the K\"ahler potential, \ref{KahlerPot}, of this space. This is notable as it differs by a normalisation constant from the well known potential provided in \cite[p.14]{Joyce2021} and \cite[p.160]{JoyceBook}.

\begin{defin}\label{EHspace}
    The following metric, defined on $\{r>a\}\times (S^3/\{\pm1\})$, is known as the \textbf{Eguchi-Hanson Metric}, $\omega_{EH}$, written in coordinates $(r, \theta,\phi,  \varphi): 0\le r, 0<\theta\le \pi, 0<\phi\le 2\pi, 0<\varphi\le 4\pi $ and a parameter $a:0<a<r$, we will call this space the \textbf{incomplete Eguchi-Hanson space}, $\bar{X}_{EH}$:
    \begin{align}
        ds^2=\left(1-\frac{a^4}{r^4}\right)^{-1}dr^2+\frac{r^2}{4}\left[\left(1-\frac{a^4}{r^4}\right)(d\varphi+\cos(\theta))^2+d\theta^2+\sin^2(\theta)d\phi^2\right]
    \end{align} 

It can also be written using the following: 
\begin{align}
    \begin{split}
    \sigma_1&=\frac{1}{2}(-\cos(\varphi)d\theta-\sin(\theta)\sin(\varphi)d\phi)\\
    \sigma_2&=\frac{1}{2}(\sin(\varphi)d\theta-\sin(\theta)\cos(\varphi)d\phi)\\
    \sigma_3&=\frac{1}{2}(-d\varphi -\cos(\theta)d\phi)\\
    ds^2&=\left(1-\frac{a^4}{r^4}\right)^{-1}dr^2+r^2\left(\sigma_1^2+\sigma_2^2+\left(1-\frac{a^4}{r^4}\right)\sigma_3^2\right)
    \end{split}
\end{align}
Using the substitution, $u^4=r^4-a^4$ with $u>0$, deals with the singularity at $r=a$ giving us a definition for the Eguchi-Hanson metric on $\{u>0\}\times (S^3/\{\pm 1\}) $.
    \cite{EHmetric}
\end{defin}

\begin{prop}
     We can write the Eguchi-Hanson metric as a power series in $a$, which to the first order is the flat metric.
\end{prop}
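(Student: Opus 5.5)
The approach is to expand the metric of Definition \ref{EHspace}, written with the left-invariant forms $\sigma_1,\sigma_2,\sigma_3$, in powers of $a$ at fixed $r>a$. The claim is that every correction to the flat metric carries a factor $a^4/r^4$. Set $\epsilon=a^4/r^4$, which satisfies $0\le \epsilon<1$ on $\bar X_{EH}$. The geometric series then converges and gives
\begin{align}
\left(1-\frac{a^4}{r^4}\right)^{-1}=\sum_{k\ge 0}\frac{a^{4k}}{r^{4k}}.
\end{align}
Substituting this into the $\sigma$-form of the metric yields
\begin{align}
ds^2=\Big(dr^2+r^2(\sigma_1^2+\sigma_2^2+\sigma_3^2)\Big)+\frac{a^4}{r^4}\Big(dr^2-r^2\sigma_3^2\Big)+\sum_{k\ge 2}\frac{a^{4k}}{r^{4k}}\,dr^2 .
\end{align}
This is a convergent power series in $a$ whose only nonzero coefficients occur in degrees divisible by $4$. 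The convergence is uniform on regions of the form $\{r\ge r_0\}$ with $r_0>a$.

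The next step is to identify the order-zero term $g_0=dr^2+r^2(\sigma_1^2+\sigma_2^2+\sigma_3^2)$ as the flat Euclidean metric on $\mathbf{R}^4$, or more precisely its quotient by $\{\pm1\}$. I would use the standard fact that $\sigma_1,\sigma_2,\sigma_3$ are a left-invariant coframe on $S^3\cong SU(2)$. With the factor $\tfrac12$ in their definition, $\sigma_1^2+\sigma_2^2+\sigma_3^2$ is the round metric of the unit sphere. To verify this directly, I would compute
\begin{align}
\sigma_1^2+\sigma_2^2=\tfrac14(d\theta^2+\sin^2\theta\, d\phi^2), \qquad \sigma_3^2=\tfrac14(d\varphi+\cos\theta\, d\phi)^2,
\end{align}
using $\cos^2\varphi+\sin^2\varphi=1$ to cancel the cross terms in the first expression. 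I would then compare the result with the round metric written in Euler angles. One way to make the comparison is to write $z_1=r\cos(\theta/2)e^{i(\varphi+\phi)/2}$ and $z_2=r\sin(\theta/2)e^{i(\varphi-\phi)/2}$, and check that $|dz_1|^2+|dz_2|^2$ reproduces $g_0$. The range $0<\varphi\le 4\pi$ gives all of $S^3$, and the identification by $\pm1$ then produces the cone $\mathbf{R}^4/\{\pm1\}$.

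The main obstacle is this identification step, which rests on carefully matching the conventions: the factor $\tfrac12$ in the $\sigma_i$, the sign inside $(d\varphi+\cos\theta\,d\phi)$ (the paper's Definition \ref{EHspace} drops the $d\phi$ there), and the period of $\varphi$. The power series step itself is routine. Finally, I would note that the first correction $\frac{a^4}{r^4}(dr^2-r^2\sigma_3^2)$ has size $O(a^4r^{-4})$ in the $g_0$-norm, which makes precise the sense in which the Eguchi-Hanson metric is flat to first order. Repeating the expansion for the derivatives of the coefficients gives $|\nabla^k_{g_0}(g-g_0)|=O(a^4r^{-4-k})$. This decay is the form of the statement that will be needed later in the gluing.
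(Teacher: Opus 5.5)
Your proposal is correct and follows the same route as the paper: expand $(1-a^4/r^4)^{-1}$ as a geometric series, substitute it into the metric, and recognise the $a$-independent term as the flat metric on $\mathbf{R}^4/\{\pm1\}$. You go further than the paper in two respects. First, you actually verify via $z_1,z_2$ that $dr^2+r^2(\sigma_1^2+\sigma_2^2+\sigma_3^2)$ is flat, which the paper only asserts. Second, your normalisation $r^2\sigma_i^2$ is the one consistent with the $\sigma$-form of Definition \ref{EHspace}, whereas the paper's proof carries a stray factor $\tfrac14$ in front of $r^2(\sigma_1^2+\sigma_2^2+\sigma_3^2)$ and in the $a^4$ correction term.
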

 \textbf{Proof:} First writing this metric as an expansion relies on the following well known result:
 \begin{align}
     \left(1-\frac{a^4}{r^4}\right)^{-1}=\sum_{n=0}^{\infty}\left(\frac{a^4}{r^4}\right)^n
 \end{align}
So we can write it in the following way:
\begin{align}
    ds^2=\sum_{n=0}^{\infty}\left(\frac{a^4}{r^4}\right)^n dr^2+\frac{r^2}{4}(\sigma_1^2+\sigma_2^2)+\frac{r^2}{4}\left(1-\frac{a^4}{r^4}\right)\sigma_3^2
\end{align}
This gives rise to the following approximation:
\begin{align}
    ds^2=\left(dr^2+\frac{r^2}{4}(\sigma_1^2+\sigma_2^2+\sigma_3^2)\right)+\left(\frac{a^4}{r^4}dr^2-\frac{a^4}{4r^2}\sigma_3^2\right)+\sum_{n=2}^\infty\left(\frac{a^4}{r^4}\right)^ndr^2
\end{align}
Looking at the first component of this expansion, we can identify this as the flat metric, as desired. \qed

As one can see, the space we have defined the Eguchi-Hanson metric on is incomplete. To complete this, we look at what happens as $r$ approaches our variable $a$ and the expansion given above provides the perfect tool to do this. Consider the following for $r^2=a^2+\rho^2$ with $\rho>0$ as $r$ is close to $a$, so $\rho$ is small:
\begin{align}
    \begin{split}
    ds^2&=\sum_{n=0}^{\infty}\left(\frac{a^4}{r^4}\right)^n dr^2+\frac{r^2}{4}(\sigma_1^2+\sigma_2^2)+\frac{r^2}{4}\left(1-\frac{a^4}{r^4}\right)\sigma_3^2\\&=\sum_{n=0}^\infty\left(\frac{a^4}{(a^2+\rho^2)^2}\right)^n\frac{\rho^2}{a^2+\rho^2}d\rho^2+\frac{a^2+\rho^2}{4}(\sigma_1^2+\sigma_2^2)+\frac{a^2+\rho^2}{4}\frac{2a^2\rho^2+\rho^4}{(a^2+\rho^2)^2}\sigma_3^2
    \end{split}
\end{align}
Now, considering as $\rho$ goes to $0$:
\begin{align}
    \begin{split}
        \lim_{\rho\rightarrow 0} ds^2\approx \lim_{\rho\rightarrow0}\left(\frac{a^2+\rho^2}{2a^2+\rho^2}d\rho^2+\frac{2\rho^2(a^2+\rho^2)}{4(a^2+\rho^2)}\sigma_3^2\right)+\frac{a^2}{4}(\sigma_1^2+\sigma_2^2)\approx \frac{1}{2}d\rho^2+\frac{\rho^2}{2}\sigma_3^2+\frac{a^2}{4}(\sigma_1^2+\sigma_2^2)
    \end{split}
\end{align}
We can now see, by considering $\rho$ going to zero, that the $\sigma_3$ term vanishes leaving $S^2$. So, to complete the Eguchi-Hanson space it is natural to add an $S^2$ at $r=a$.

\begin{defin}
    We define the \textbf{Eguchi-Hanson} space, $X_{EH}$, to be the metric completion of the space $\bar{X}_{EH}$, \ref{EHspace}, as given above above. So, concretely $\{r>a\}\times (S^3/\{\pm1\})$ with $S^2$ added at $r=a$.
\end{defin}

\begin{prop}\label{dsigmas}
    $\sigma_1, \sigma_2$ and $\sigma_3$ given above satisfy the following relationships:
    \begin{align}
        d\sigma_1=2\sigma_2\wedge \sigma_3, d\sigma_2=2\sigma_3\wedge\sigma_1, d\sigma_3=2\sigma_1\wedge\sigma_2
    \end{align}
\end{prop}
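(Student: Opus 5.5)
The plan is a direct computation in the coordinates $(\theta,\phi,\varphi)$ of Definition \ref{EHspace}. Each $\sigma_i$ is a $1$-form whose coefficients depend only on $\theta$ and $\varphi$. So $d\sigma_i$ is obtained by differentiating those coefficients, wedging with the relevant differential, and collecting terms in the basis $d\theta\wedge d\phi$, $d\varphi\wedge d\theta$, $d\varphi\wedge d\phi$. In parallel I will expand the products $\sigma_j\wedge\sigma_k$ in the same basis and compare coefficients. No earlier result is needed beyond $d\circ d=0$ and the Leibniz rule.

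The easiest case, which I will do first as a sanity check on the sign conventions, is $d\sigma_3$. Since $d(d\varphi)=0$,
\begin{align}
d\sigma_3=-\tfrac12\, d(\cos\theta)\wedge d\phi=\tfrac12\sin\theta\, d\theta\wedge d\phi .
\end{align}
For $\sigma_1\wedge\sigma_2$, only the cross terms involving $d\theta\wedge d\phi$ survive. Their coefficients combine as $\tfrac14\sin\theta(\cos^2\varphi+\sin^2\varphi)$, so $\sigma_1\wedge\sigma_2=\tfrac14\sin\theta\,d\theta\wedge d\phi$. This gives $d\sigma_3=2\sigma_1\wedge\sigma_2$.

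Next I compute
\begin{align}
d\sigma_1=\tfrac12\left(\sin\varphi\, d\varphi\wedge d\theta-\cos\theta\sin\varphi\, d\theta\wedge d\phi-\sin\theta\cos\varphi\, d\varphi\wedge d\phi\right).
\end{align}
I then expand $\sigma_2\wedge\sigma_3=\tfrac14(\sin\varphi\,d\theta-\sin\theta\cos\varphi\,d\phi)\wedge(-d\varphi-\cos\theta\,d\phi)$, reordering each wedge into the chosen basis. The three resulting terms should match those of $d\sigma_1$ exactly, up to the factor $2$. The computation for $d\sigma_2=2\sigma_3\wedge\sigma_1$ is identical in structure, with the roles of $\sin\varphi$ and $\cos\varphi$ swapped and the corresponding sign changes.

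The only real obstacle is bookkeeping of signs when reordering wedge products, e.g.\ $d\theta\wedge d\varphi=-d\varphi\wedge d\theta$ and $d\phi\wedge d\varphi=-d\varphi\wedge d\phi$. The nonstandard signs in the definitions of the $\sigma_i$ are exactly what make the constants come out as $+2$ rather than $-2$. To guard against errors I will fix the ordered basis above once and for all, and reduce every term to it before comparing.
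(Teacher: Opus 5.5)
Your proposal is correct and takes the same route as the paper: a direct coordinate computation of $d\sigma_i$ and $\sigma_j\wedge\sigma_k$, compared term by term. The paper writes out only the $d\sigma_1=2\sigma_2\wedge\sigma_3$ case, which agrees with your expansion, and calls the others analogous; your explicit check of $d\sigma_3=2\sigma_1\wedge\sigma_2$ and your described pattern for $d\sigma_2=2\sigma_3\wedge\sigma_1$ both go through with the stated sign conventions.
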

\textbf{Proof:} For brevity, I will only show the first of these relationships but the other two can be checked in an almost identical way:
\begin{align}
    \begin{split}
    d\sigma_1&=\frac{1}{2}d(-\cos(\varphi)d\theta-\sin(\theta)\sin(\varphi)d\phi)\\&=\frac{1}{2}(\sin(\varphi)d\varphi\wedge d\theta-\cos(\theta)\sin(\varphi)d\theta\wedge d\phi-\sin(\theta)\cos(\varphi)d\varphi\wedge d\phi)\\
    \sigma_2\wedge\sigma_3&=\frac{1}{4}(-\sin(\theta)d\theta\wedge d\varphi-\sin(\varphi)\cos(\theta)d\theta\wedge d\phi+\sin(\theta)\cos(\varphi)d\phi\wedge d\varphi)\\
    \implies d\sigma_1&=2\sigma_2\wedge\sigma_3
    \end{split}
\end{align}
\qed

When discussing this manifold it is often useful to use the following basis, and we will be using it freely moving forward:
\begin{align} \label{basis}
    e_0=\frac{dr}{\sqrt{1-\frac{a^4}{r^4}}}\text{ , } e_1=r\sigma_1\text{ , } e_2=r\sigma_2\text{ , } e_3=r\sqrt{1-\frac{a^4}{r^4}}\sigma_3
\end{align}
Observe this satisfies $ds^2=e_0^2+e_1^2+e_2^2+e_3^2$.

\begin{defin}\label{Hyperk}
    We can define three almost complex structures on this manifold:
    \begin{align}
    \begin{split}
    I(e_0)&=e_3, I(e_3)=-e_0, I(e_1)=e_2, I(e_2)=-e_1\\
    J(e_0)&=e_1, J(e_1)=-e_0, J(e_2)=e_3, J(e_3)=-e_2\\
    K(e_0)&=e_2, K(e_2)=-e_0, K(e_3)=e_1, K(e_1)=-e_3
    \end{split}
\end{align}
\end{defin}
\textbf{Note:} Observe it is simple to check that $I^2=-id, J^2=-id, K^2=-id$. It is also interesting to point out, these complex structures are not the only three possible complex structures. They have been chosen because of how nicely they interact with our chosen basis $\{e_i\}$.

\begin{prop}
    The K\"ahler forms associated with the complex structures $I,J$ and $K$ \ref{Hyperk} are:
    \begin{align}
        \begin{split}
        \omega_I&=rdr\wedge\sigma_3+r^2\sigma_1\wedge \sigma_2\\
        \omega_J&=\frac{r}{\sqrt{1-\frac{a^4}{r^4}}} dr\wedge \sigma_1+r^2\sqrt{1-\frac{a^4}{r^4}}\sigma_2\wedge\sigma_3\\
        \omega_K&=\frac{r}{\sqrt{1-\frac{a^4}{r^4}}}dr\wedge\sigma_2+r^2\sqrt{1-\frac{a^4}{r^4}}\sigma_3\wedge\sigma_1
        \end{split}
    \end{align}
    and $I,J,K$ form a hyper-K\"ahler tripple. 
\end{prop}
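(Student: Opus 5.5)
The plan is to read off the Kähler forms from the orthonormal coframe (\ref{basis}), check that they are closed using Proposition \ref{dsigmas}, and then deduce that $I,J,K$ are integrable and satisfy the quaternion relations, so that they form a hyper-K\"ahler triple.

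\textbf{Step 1: the forms.} Since $ds^2=e_0^2+e_1^2+e_2^2+e_3^2$ and each of $I,J,K$ acts on the orthonormal frame by rotating pairs, the convention $\omega_I(X,Y)=g(IX,Y)$ gives, up to the overall sign convention,
\begin{align}
\omega_I=e_0\wedge e_3+e_1\wedge e_2,\quad \omega_J=e_0\wedge e_1+e_2\wedge e_3,\quad \omega_K=e_0\wedge e_2+e_3\wedge e_1 .
\end{align}
Substituting the definitions of $e_i$, write $s=\sqrt{1-a^4/r^4}$. In $e_0\wedge e_3$ the factors $s^{-1}$ and $s$ cancel, giving $r\,dr\wedge\sigma_3$. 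In $\omega_J$ and $\omega_K$ they do not cancel, which produces the coefficients $r/s$ and $r^2 s$ in the statement. This step is mechanical.

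\textbf{Step 2: closedness.} I use $d\sigma_i=2\sigma_j\wedge\sigma_k$ for cyclic $(i,j,k)$. For $\omega_I$,
\begin{align}
d(r\,dr\wedge\sigma_3)=-2r\,dr\wedge\sigma_1\wedge\sigma_2 .
\end{align}
For the second term,
\begin{align}
d(r^2\sigma_1\wedge\sigma_2)=2r\,dr\wedge\sigma_1\wedge\sigma_2+r^2(d\sigma_1\wedge\sigma_2-\sigma_1\wedge d\sigma_2).
\end{align}
The last bracket vanishes, since each product contains a repeated $\sigma_i$. The two contributions therefore cancel.

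For $\omega_J$, write $f=r/s$ and $h=r^2s$. The same computation gives
\begin{align}
d\omega_J=(h'-2f)\,dr\wedge\sigma_2\wedge\sigma_3 .
\end{align}
Since $h^2=r^4-a^4$, we have $h'=2r^3/h=2r/s=2f$, so $d\omega_J=0$. The computation for $\omega_K$ is identical with the indices permuted cyclically.

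\textbf{Step 3: quaternion relations.} Checking $IJ=-K$, $JK=-I$ and $KI=-J$ is a finite check on $e_0,\dots,e_3$. I would do it directly, perhaps fixing orientation conventions if a sign fails.

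\textbf{Step 4: integrability.} This is the main obstacle, because Theorem \ref{HperkIJK} presupposes that $I,J,K$ are genuinely complex structures, and Definition \ref{Hyperk} only gives almost complex structures. I would argue pointwise with the triple of closed forms. The form $\Omega_I=\omega_J+i\omega_K$ is a $(2,0)$-form for $I$; one checks this by noting that $\Omega_I(IX,Y)=i\,\Omega_I(X,Y)$, using the quaternion relations. It is nowhere vanishing and, by Step 2, closed.

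A closed nowhere-vanishing $(2,0)$-form forces integrability in real dimension 4. Indeed, $d\Omega_I$ has components of types $(3,0)$, $(2,1)$ and $(1,2)$. The $(3,0)$ part vanishes automatically in complex dimension 2. The $(1,2)$ part of $d\Omega_I$ is governed by the Nijenhuis tensor $N_I$ contracted against $\Omega_I$, and this contraction is injective because $\Omega_I$ is nondegenerate on $T^{1,0}$. Hence $d\Omega_I=0$ gives $N_I=0$. The same argument applied to $\omega_K+i\omega_I$ and $\omega_I+i\omega_J$ handles $J$ and $K$.

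Alternatively, I could invoke the standard lemma (Hitchin) that a triple of closed 2-forms satisfying the hyper-K\"ahler algebraic relations determines integrable complex structures. With integrability established, each $\omega$ is a closed Kähler form compatible with $g$, and $I,J,K$ form a hyper-K\"ahler triple.
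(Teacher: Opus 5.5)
Your argument is correct, and Steps 1--3 follow the paper's own proof. The paper likewise reads off $\omega_I=e_0\wedge e_3+e_1\wedge e_2$ from $\omega(u,v)=g(Iu,v)$, verifies $d\omega_I=0$ using Proposition \ref{dsigmas}, and declares the quaternion relations immediate from Definition \ref{Hyperk}.

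You go further than the paper in two places.

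First, the paper handles only $\omega_I$ and says the other two cases ``should be clear''. Your computation $d\omega_J=(h'-2f)\,dr\wedge\sigma_2\wedge\sigma_3$ shows they are not a verbatim repeat. Closedness there rests on the identity $\frac{d}{dr}\sqrt{r^4-a^4}=2r^3/\sqrt{r^4-a^4}$, which depends on the particular radial profile of the metric.

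Second, and more substantially, the paper treats positivity plus closedness as enough for being K\"ahler and never addresses integrability. Yet the definition of a hyper-K\"ahler triple requires genuine complex structures, and the proof of Theorem \ref{HperkIJK} uses integrability. Your Step 4 fills this gap. For $Z\in T^{1,0}$ and $\bar X,\bar Y\in T^{0,1}$ one has $d\Omega(Z,\bar X,\bar Y)=\Omega(Z,[\bar X,\bar Y])$. So a closed $\Omega$ that is nondegenerate on $T^{1,0}$ forces $[\bar X,\bar Y]\in T^{0,1}$, i.e.\ $N=0$; Hitchin's lemma is the packaged version of this. The paper's route is shorter, but yours is the one that yields integrable complex structures rather than just an almost-K\"ahler triple.

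One detail on Step 3: the direct check gives $IJ=K$, $JK=I$, $KI=J$, the opposite of the signs stated in the paper. For instance, $IJ(e_0)=I(e_1)=e_2=K(e_0)$. So your hedge about conventions is warranted. These are exactly the relations under which $\omega_J+i\omega_K$ satisfies $\Omega(IX,Y)=i\,\Omega(X,Y)$. With the paper's signs it would be of type $(0,2)$ and you would use its conjugate instead, so your integrability argument is unaffected.
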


\textbf{Proof:} For simplicity, we will only show this for $\omega_I$, but it should be clear that it holds for $\omega_J$ and $\omega_K$. First, observe that by using the $e_i$'s, defined previously \ref{basis}, we can rewrite $\omega_I=e_0\wedge e_3+e_1\wedge e_2$. Recalling that a K\"ahler form, $\omega$, complex structure, $I$, and the Riemannian metric $g$ are related in the following way $\omega(u,v)=g(Iu,v)$. By applying this to our complex structure $I$, from \ref{Hyperk}, our Eguchi-Hanson metric $g$ and our $e_i$'s we have the following as the only two non-vanishing components:
\begin{align}
    \omega_I(e_0, e_3)=g(I(e_0), e_3)=g(e_3, e_3)=1, \omega_I(e_1, e_2)=g(I(e_1), e_2)=1
\end{align}
It follows that $\omega_I=e_0\wedge e_3+e_1\wedge e_2$ as desired. All that remains is to show that this is indeed a K\"ahler form. We can see immediately from the above argument that this form is positive, so all that is left is to show that it is closed. To do this, consider the following:
\begin{align}
    \begin{split}
    d(\omega_I)&=d(rdr\wedge\sigma_3+r^2\sigma_1\wedge \sigma_2)=d(rdr\wedge\sigma_3)+d(r^2\sigma_1\wedge\sigma_2)\\
    &=d(rdr)\wedge\sigma_3-rdr\wedge d\sigma_3+2rdr\wedge\sigma_1\wedge\sigma_2+r^2(d\sigma_1\wedge\sigma_2-\sigma_1\wedge d\sigma_2)
    \end{split}
\end{align}
We can now use the identities we found earlier, $d\sigma_i=2\epsilon_{ijk}\sigma_j\wedge\sigma_k$ \ref{dsigmas}, to get that the following holds:
\begin{align}
    \begin{split}
    d(\omega_I)&=-2rdr\wedge \sigma_1\wedge \sigma_2+2rdr\wedge\sigma_1\wedge\sigma_2+2r^2(\sigma_2\wedge\sigma_3\wedge\sigma_2-\sigma_1\wedge\sigma_3\wedge\sigma_1)\\&=-2rdr\wedge \sigma_1\wedge \sigma_2+2rdr\wedge\sigma_1\wedge\sigma_2=0
    \end{split}
\end{align}
So we have $\omega_I$ is closed. The final step to show that we have a hyper-K\"ahler triple is showing that the following relationships $IJ=-K, JK=-I, KI=-J$ hold. These are immediate from our definition of structures $I,J,K$ \ref{Hyperk} so we have a hyper-K\"ahler triple. 

\qed

\begin{cor} 
    The Eguchi-Hanson metric $\omega_{EH}$ is Calabi-Yau. \cite{Simpap, Joyce2021}
\end{cor}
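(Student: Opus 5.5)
This corollary should follow directly by combining the preceding proposition with Theorem \ref{HperkIJK}. The preceding proposition shows that $I,J,K$ from Definition \ref{Hyperk} form a hyper-K\"ahler triple for the Eguchi-Hanson metric on $\bar{X}_{EH}$: each $\omega_I,\omega_J,\omega_K$ is closed and positive, and $IJ=-K$, $JK=-I$, $KI=-J$. Theorem \ref{HperkIJK} then gives $\text{Hol}(g)\subseteq SU(2)$, so $\omega_{EH}$ (which we take to be the K\"ahler form $\omega_I$, say) is Calabi-Yau in the sense of Definition \ref{CYdef} on the incomplete space.

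The order of steps is as follows. First, I would make explicit that $\omega_J$ and $\omega_K$ are closed, since the earlier proof only did $\omega_I$ in detail. Using Proposition \ref{dsigmas}, $d(r^2 f\,\sigma_2\wedge\sigma_3)$ contributes $(r^2f)'\,dr\wedge\sigma_2\wedge\sigma_3$, where $f=\sqrt{1-a^4/r^4}$. The term $d\big(\tfrac{r}{f}dr\wedge\sigma_1\big)=-\tfrac{r}{f}dr\wedge 2\sigma_2\wedge\sigma_3$ contributes the rest. Closedness therefore reduces to the ODE identity $(r^2f)'=2r/f$, which holds because $(r^2 f)^2=r^4-a^4$ has derivative $4r^3=2(r^2f)(r^2f)'$. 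Second, I would note that Theorem \ref{HperkIJK} also uses integrability of $I$. Since a hyper-K\"ahler triple was defined as three K\"ahler structures, I would either take that as given, or justify it by the standard fact that three closed 2-forms pairing to an orthonormal quaternionic structure force integrability (Hitchin's lemma).

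Third, I would extend the conclusion across the bolt $S^2$ at $r=a$. Using $u^4=r^4-a^4$, or the coordinate $\rho$ from the expansion above, the metric near $r=a$ is asymptotically $\tfrac12(d\rho^2+\rho^2\sigma_3^2)+\tfrac{a^2}{4}(\sigma_1^2+\sigma_2^2)$. With $\varphi$ of period $2\pi$ after the $\{\pm1\}$ quotient, this closes smoothly, so $g$ and the forms $\omega_I,\omega_J,\omega_K$ extend smoothly to $X_{EH}$. Closedness, the quaternion relations, and parallelism of $I,J,K$ then hold on the dense open set $\{r>a\}$, and hence everywhere by continuity. Parallel transport along loops through the bolt therefore still commutes with $I_p,J_p,K_p$, and the holonomy argument of Theorem \ref{HperkIJK} applies verbatim on all of $X_{EH}$.

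The main obstacle is this third step: checking smoothness of the metric and the three 2-forms at $r=a$. The period of $\varphi$ must match exactly the cone angle $2\pi$ in the $(\rho,\varphi)$ plane, and the factors of $1/2$ in $\sigma_3$ and in the $\frac12 d\rho^2$ term have to be tracked carefully. The coefficients $\tfrac{r}{f}$ in $\omega_J,\omega_K$ blow up as $r\to a$, so I would rewrite them as $\tfrac{r^3}{u^3}\,u\,du$-type expressions to confirm they extend boundedly. If this becomes messy, a fallback is to note that holonomy of the completed space is determined by the open dense set, where parallel forms are already established, and that a bounded extension of the parallel tensors $I,J,K$ by continuity suffices.
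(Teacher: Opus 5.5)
Your core argument is the paper's own: its proof of this corollary is the single sentence that the result follows from the preceding proposition together with Theorem \ref{HperkIJK}. The rest of your proposal goes beyond what the paper does, and it is worth doing. The preceding proposition is computed in the frame $e_0,\dots,e_3$, which only exists on $\{r>a\}$. So the paper's one-liner really proves the statement on $\bar{X}_{EH}$, whereas it is the completed $X_{EH}$ that gets glued in later. Your closedness check for $\omega_J,\omega_K$ via $(r^2f)'=2r/f$ is correct. Invoking Hitchin's lemma (closedness of the three forms forces integrability) fills a point the paper only asserts.

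One correction is needed in your third step. The expansion $\tfrac12(d\rho^2+\rho^2\sigma_3^2)+\tfrac{a^2}{4}(\sigma_1^2+\sigma_2^2)$, which you took from the paper, is only right if the $\sigma_i$ in it are read as $2\sigma_i$. The paper's expansion silently uses the prefactor $\tfrac{r^2}{4}$ from the first form of the metric. With the $\sigma_i$ as actually defined (carrying the factor $\tfrac12$), substituting $r^2=a^2+\rho^2$ gives
\[
g\approx \tfrac12\, d\rho^2+2\rho^2\sigma_3^2+a^2(\sigma_1^2+\sigma_2^2)=\tfrac12\bigl(d\rho^2+\rho^2(d\varphi+\cos\theta\, d\phi)^2\bigr)+\tfrac{a^2}{4}\bigl(d\theta^2+\sin^2\theta\, d\phi^2\bigr).
\]
If you instead plug the halved $\sigma_3$ into the formula as you wrote it, the fibre metric becomes $\tfrac12(d\rho^2+\tfrac{\rho^2}{4}d\varphi^2)$. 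With $\varphi$ of period $2\pi$ this is a cone of angle $\pi$, i.e.\ an orbifold point, which contradicts your smoothness claim.

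With the corrected coefficient, the fibre metric is $\tfrac12(d\rho^2+\rho^2d\varphi^2)$. Period $2\pi$ for $\varphi$ is exactly the $\{\pm1\}$ quotient, since $\varphi\mapsto\varphi+2\pi$ sends $(z_1,z_2)\mapsto(-z_1,-z_2)$ under $P$. So the metric closes smoothly across the bolt, and the rest of your extension-by-continuity argument goes through.
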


\textbf{Proof:} This result follows immediately from the above proposition and \ref{HperkIJK} and will be important going forward. \qed

\begin{defin}
    A real valued function, $\varphi$, on a K\"ahler manifold $(X, \omega)$, which satisfies $\omega=i\partial\bar{\partial}\varphi$ is called a \textbf{K\"ahler potential}. \cite[p.50]{Cman}
\end{defin}

\begin{theorem} \label{KahlerPot}
    The K\"ahler potential for $\omega_I$, $\varphi_a(r)$, associated with the Eguchi-Hanson metric can be written as the following on $\{r>a\}\times (S^3/\{\pm1\})$:
    \begin{align}
         \varphi_a(r)=\frac{r^2}{2}+\frac{a^2}{4}\log \left(\frac{r^2-a^2}{r^2+a^2}\right)
    \end{align}
    
\end{theorem}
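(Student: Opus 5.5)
The plan is a direct computation. I will rewrite $i\partial\bar\partial$ as a real operator, evaluate it on a function of $r$ alone using the coframe $\{e_i\}$ from \ref{basis} and the structure equations of \ref{dsigmas}, and match the result against $\omega_I=r\,dr\wedge\sigma_3+r^2\sigma_1\wedge\sigma_2$. Concretely, I will use $2i\partial\bar\partial=dd^c$, where $d^cf=I^*df$ up to a sign convention; I fix the normalisation and sign once, consistently with how the paper has been using $i\partial\bar\partial$. Since $\varphi_a$ depends only on $r$, we have $d\varphi_a=\varphi_a'(r)\,dr$. The whole computation then reduces to finding $I^*dr$ and differentiating once more.

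\textbf{Step 1: compute $I^*dr$.} Write $dr=\sqrt{1-a^4/r^4}\,e_0$. Definition \ref{Hyperk} gives $I(e_0)=e_3$ on the orthonormal frame, so on the dual coframe $I^*e_0=\pm e_3$. Since $e_3=r\sqrt{1-a^4/r^4}\,\sigma_3$, this gives
\begin{align}
 d^c\varphi_a=\pm\varphi_a'(r)\,r\left(1-\frac{a^4}{r^4}\right)\sigma_3=:h(r)\,\sigma_3 .
\end{align}

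\textbf{Step 2: apply $d$.} Using $d\sigma_3=2\sigma_1\wedge\sigma_2$ from \ref{dsigmas},
\begin{align}
 dd^c\varphi_a=h'(r)\,dr\wedge\sigma_3+2h(r)\,\sigma_1\wedge\sigma_2 .
\end{align}
Comparing with $\omega_I$, we need $h=c\,r^2/2$ and $h'=c\,r$ for the normalisation constant $c$. These two conditions are compatible, which confirms that a radial potential exists at all. They are equivalent to requiring
\begin{align}
 \varphi_a'(r)=\kappa\,\frac{r}{1-a^4/r^4}=\kappa\,\frac{r^5}{r^4-a^4}
\end{align}
for a fixed constant $\kappa$ determined by the convention.

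\textbf{Step 3: differentiate the given potential.} A short computation gives
\begin{align}
 \varphi_a'(r)=r+\frac{a^2}{4}\left(\frac{2r}{r^2-a^2}-\frac{2r}{r^2+a^2}\right)=r+\frac{ra^4}{r^4-a^4}=\frac{r^5}{r^4-a^4},
\end{align}
which is exactly the required form with $\kappa=1$. It also reduces to the flat potential $r^2/2$ as $a\to0$, consistent with the proposition that the metric is flat to first order in $a$.

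The main obstacle is the bookkeeping of constants and signs rather than any conceptual point. There are three sources: the convention relating $i\partial\bar\partial$ to $dd^c$; whether $I$ acts on $1$-forms by $I^*$ or by $-I^*$; and the factor $\tfrac12$ built into the $\sigma_i$. This is precisely where the paper's potential differs from the one in \cite{Joyce2021, JoyceBook}. I would resolve it by checking the flat case $a=0$ first, requiring that $r^2/2$ reproduce the flat form $r\,dr\wedge\sigma_3+r^2\sigma_1\wedge\sigma_2$. That fixes $\kappa=1$ unambiguously, after which Steps 1 to 3 carry over verbatim for $a>0$. Finally, I would note that the formula is valid only on $\{r>a\}\times(S^3/\{\pm1\})$, since $\log\frac{r^2-a^2}{r^2+a^2}$ diverges at $r=a$.
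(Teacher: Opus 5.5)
Your proposal is correct and follows essentially the same route as the paper. The paper likewise computes $d(I\,d\varphi_a)$ for a radial function using the coframe $\{e_i\}$ and $d\sigma_3=2\sigma_1\wedge\sigma_2$, matches against $\omega_I$ to obtain $\varphi_a'(r)=r^5/(r^4-a^4)$, and then integrates, where you instead differentiate the given formula. The remaining differences are bookkeeping: the paper fixes the constant by proving the identity relating $d(I\,df)$ to $i\partial\bar\partial f$ in holomorphic coordinates rather than calibrating on the flat case, and it compares only the $\sigma_1\wedge\sigma_2$ coefficient after writing $d(h\sigma_3)=dh\wedge\sigma_3-h\,d\sigma_3$, so your correct Leibniz sign and your check that both coefficients are compatible are, if anything, more careful.
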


\textbf{Proof:} We will start by making the following claim which we will prove at the end:
\begin{align}
   \omega_I=-\frac{1}{2}d(I(d\varphi_a)
\end{align}
We already found $\omega_I=rdr\wedge \sigma_3+r^2\sigma_1\wedge \sigma_2=e_0\wedge e_3+e_1\wedge e_2$ above. So by finding the RHS of the above expression we can compare coefficients to find our K\"ahler potential. To begin, observe that our K\"ahler potential only depends on $r$. %
\begin{align}
    I(d\varphi(r))=I\left(\frac{\partial\varphi}{\partial r}dr\right)=I\left( \frac{\partial\varphi}{\partial r}\sqrt{1-\frac{a^4}{r^4}}e_0\right)=\frac{\partial\varphi}{\partial r}\sqrt{1-\frac{a^4}{r^4}}e_3
\end{align}
We can now apply $d$ to this once more and then compare:
\begin{align}
    \begin{split}
    d(I(d\varphi_a))&=d\left(\frac{\partial\varphi}{\partial r}\sqrt{1-\frac{a^4}{r^4}}e_3\right)=d\left(\frac{\partial\varphi}{\partial r}\sqrt{1-\frac{a^4}{r^4}}(\sqrt{1-\frac{a^4}{r^4}}r\sigma_3)\right)=d\left(\frac{\partial\varphi}{\partial r}\left(r-\frac{a^4}{r^3}\right)\sigma_3\right) \\
    &=d\left(\frac{\partial\varphi}{\partial r}\left(r-\frac{a^4}{r^3}\right)\right)\wedge\sigma_3-\frac{\partial\varphi}{\partial r}\left(r-\frac{a^4}{r^3}\right)d\sigma_3\\&
    =\left(\frac{\partial^2\varphi}{\partial r^2}\left(r-\frac{a^4}{r^3}\right)+\frac{\partial\varphi}{\partial r}\left(1+\frac{3a^4}{r^4}\right)\right)dr\wedge \sigma_3-2\frac{\partial\varphi}{\partial r}\left(r-\frac{a^4}{r^3}\right)\sigma_1\wedge\sigma_2
    \end{split}
\end{align}
By comparing the coefficient before the $\sigma_1\wedge\sigma_2$ we get the following, making sure to include the factor of $-\frac{1}{2}$ from our claim:
\begin{align}
    \begin{split}
    \frac{\partial\varphi_a}{\partial r}\left(r-\frac{a^4}{r^3}\right)&=r^2
    \implies \frac{\partial\varphi}{\partial r}=\frac{r^5}{r^4-a^4} \text{ , therefore}\\
    \varphi(r)&=\int\frac{r^5}{r^4-a^4}dr=\int\left(\frac{a^4r}{r^4-a^4}+r\right)dr=\frac{r^2}{2}+a^4\int\frac{1}{4v^2-a^4}dv\\&=\frac{r^2}{2}+a^4\int\frac{1}{2a^2(2v-a^2)}dv-a^4\int\frac{1}{2a^2(2v+a^2)}dv\\
    &=\frac{r^2}{2}+a^2\frac{\log (2v-a^2)}{4}-a^2\frac{\log (2v+a^2)}{4},
    \end{split}
\end{align}
where $v=\frac{r^2}{2}$ for ease when solving the integral. By combining the $\log$s we have the desired result if our initial claim holds. So now we need to show our claim, to do this consider a complex manifold with coordinated $z_i=x_i+iy_i$, note that we can write $d=\partial+\bar\partial$ and our complex structure acts in the following way $I(dx_i)=dy_i, I(dy_i)=-dx_i$. So for a function $f$ we get the following:
\begin{align}
    I(df)=I\left(\sum_i\left(\frac{\partial f}{\partial x_i}dx_i+\frac{\partial f}{dy_i}dy_i\right)\right)=\sum_i\left(\frac{\partial f}{\partial x_i}I(dx_i)+\frac{\partial f}{\partial y_i}I(dy_i)\right)=\sum_i\left(\frac{\partial f}{\partial x_i}dy_i-\frac{\partial f}{\partial y_i}dx_i\right)
\end{align}


We can then rewrite $I(df)$ as follows using known identities for $dx_i, dy_i, \frac{\partial}{\partial x_i}$ and $\frac{\partial}{\partial y_i}$:

\begin{align}
    \begin{split}
    I(df)&=\sum_i\left(\frac{\partial f}{\partial x_i}\frac{1}{2i}(dz_i-d\bar{z})-\frac{\partial f}{\partial y_i}\frac{1}{2}(dz_i+d\bar{z}_i)\right)\\
    &=\sum_i\left(dz_i\frac{1}{2}\left(-i\frac{\partial f}{\partial x_i}-\frac{\partial f}{\partial y_i}\right)+d\bar{z}_i\frac{1}{2}\left(i\frac{\partial f}{\partial x_i}-\frac{\partial f}{\partial y_i}\right)\right)\\
    &=i\sum_i\left(dz_i\frac{\partial f}{\partial z_i}-d\bar{z}_i\frac{\partial f}{\partial\bar{z}_i}\right)=i(\partial f-\bar\partial f)
    \end{split}
\end{align}
We can now consider the whole expression:
\begin{align}
    d(I(d))=i(\partial+\bar\partial)(\partial-\bar\partial)=i(\partial^2-\partial\bar\partial+\bar\partial\partial-\bar\partial^2)
\end{align}
Now using the following known identities $\partial^2=\bar\partial^2=0$ and $\bar\partial\partial=-\partial\bar\partial$, we get the desired result, where $\varphi$ and $\omega$ are the relevant K\"ahler potential and form:
$d(I(d\varphi_a))=-2\partial\bar{\partial}\varphi=-2\omega$. So we have that the claim is true and so our result holds. 
\qed

\begin{cor}
    Transforming from $r$ to $u$ by $r^4=u^4+a^4$, so from $\{r>a\}\times (S^3/\{\pm1\})$ to $\{u>0\}\times (S^3/\{\pm1\})$ gives the K\"ahler potential:
    \begin{align}
         \varphi_a(u)=\frac{\sqrt{u^4+a^4}}{2}+\frac{a^2}{4}\log \left(\frac{\sqrt{u^4+a^4}-a^2}{\sqrt{u^4+a^4}+a^2}\right)
    \end{align}
\end{cor}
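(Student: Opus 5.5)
The plan is to treat this as a pure change of variables in the scalar function $\varphi_a$ given by Theorem \ref{KahlerPot}. The defining relation $\omega_I = i\partial\bar\partial\varphi_a$ is coordinate-independent: $\partial$, $\bar\partial$ and $\omega_I$ are intrinsic to the complex manifold. So if $\Psi:\{u>0\}\times(S^3/\{\pm1\})\to\{r>a\}\times(S^3/\{\pm1\})$ is the diffeomorphism $(u,\theta,\phi,\varphi)\mapsto (r(u),\theta,\phi,\varphi)$, then $\varphi_a\circ\Psi$ is a K\"ahler potential for $\Psi^*\omega_I$. Here $\Psi^*\omega_I$ is exactly the form $\omega_I$ written in the $u$-coordinate, since $\Psi$ fixes the angular variables, and so it commutes with $I$ and the $\sigma_i$. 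No recomputation of $d(I(d\,\cdot))$ is therefore needed.

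The first step is to check that $\Psi$ is a genuine diffeomorphism onto the region where Theorem \ref{KahlerPot} applies. The map $u\mapsto r=(u^4+a^4)^{1/4}$ is smooth and strictly increasing on $u>0$, with $r\to a^+$ as $u\to0^+$ and $r\to\infty$ as $u\to\infty$. Its inverse $u=(r^4-a^4)^{1/4}$ is smooth on $r>a$. Hence $\Psi$ is a bijection $\{u>0\}\to\{r>a\}$, and the potential from Theorem \ref{KahlerPot} pulls back to the whole $u$-domain.

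The second step is the substitution itself. Since $\varphi_a$ depends on $r$ only through $r^2$, I use $r^2=\sqrt{u^4+a^4}$ (the positive root, valid because $r>0$). This turns $\frac{r^2}{2}$ into $\frac{\sqrt{u^4+a^4}}{2}$, and $\frac{r^2-a^2}{r^2+a^2}$ into $\frac{\sqrt{u^4+a^4}-a^2}{\sqrt{u^4+a^4}+a^2}$. I will also note that the argument of the logarithm stays positive on $u>0$, because $\sqrt{u^4+a^4}>a^2$ there. This gives the stated formula.

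There is no real obstacle here. The only point needing care is the justification that the potential transforms as a scalar, which rests on $\partial\bar\partial$ being intrinsic. A possible sanity check is that $\varphi_a(u)\sim\log u$ as $u\to0$, consistent with the potential not extending smoothly across the added $S^2$. The statement only concerns $u>0$, so this does not affect the proof.
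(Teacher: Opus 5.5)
Your proposal is correct and matches the paper. The paper gives no separate proof of this corollary; it treats it as an immediate consequence of Theorem \ref{KahlerPot}, obtained by substituting $r^2=\sqrt{u^4+a^4}$ into $\varphi_a(r)$. Your extra justifications are sound and only make explicit what the paper leaves implicit: that $u\mapsto (u^4+a^4)^{1/4}$ is a diffeomorphism from $\{u>0\}$ onto $\{r>a\}$, that the potential transforms as a scalar under this coordinate change, and that the argument of the logarithm stays positive.
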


\begin{cor}
   The K\"ahler potential we found above, $\varphi_a(u)$, differs from a standard K\"ahler potential found in the following, \cite[p.160]{JoyceBook}, by a factor of 2, $2\varphi_a(u)=\phi_a(u)$. So, they both define the same K\"ahler form up to our choice of normalisation for the $\sigma_i$'s. The potential, $\phi_a$, is given on $\{u>0\}\times (S^3/\{\pm1\})$ as $\phi_a(u)=\sqrt{u^4+a^4}-a^2\log\left(\frac{\sqrt{u^4+a^4}+a^2}{u^2}\right)$.
\end{cor}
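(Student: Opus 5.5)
The plan is a direct algebraic comparison. I would first substitute $r^2=\sqrt{u^4+a^4}$ into the expression for $\varphi_a(u)$ from the preceding corollary, which itself comes from Theorem \ref{KahlerPot}, and multiply by $2$. This gives
\begin{align}
    2\varphi_a(u)=\sqrt{u^4+a^4}+\frac{a^2}{2}\log\left(\frac{\sqrt{u^4+a^4}-a^2}{\sqrt{u^4+a^4}+a^2}\right).
\end{align}
The polynomial parts then already agree with the first term of $\phi_a(u)$. So the whole identity reduces to matching the logarithmic terms.

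For the logarithm, the key step is to rationalise the argument by multiplying numerator and denominator by $r^2+a^2$. Using $r^4-a^4=u^4$, this gives
\begin{align}
    \frac{r^2-a^2}{r^2+a^2}=\frac{r^4-a^4}{(r^2+a^2)^2}=\left(\frac{u^2}{\sqrt{u^4+a^4}+a^2}\right)^2.
\end{align}
Taking logarithms then yields
\begin{align}
    \frac{a^2}{2}\log\left(\frac{r^2-a^2}{r^2+a^2}\right)=-a^2\log\left(\frac{\sqrt{u^4+a^4}+a^2}{u^2}\right),
\end{align}
which is exactly the second term of $\phi_a(u)$. This establishes $2\varphi_a(u)=\phi_a(u)$ identically on $\{u>0\}\times(S^3/\{\pm1\})$, with no additive constant needed. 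All quantities are positive for $u>0$, so the logarithms are well defined.

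The remaining, more interpretive, step is to explain why a factor of $2$ still describes the same K\"ahler form. Since $i\partial\bar\partial$ is linear, $\phi_a$ produces $2\omega_I$ relative to our conventions. I would trace this factor back to our normalisation of the coframe: our $\sigma_i$ carry a factor $\tfrac12$ and satisfy $d\sigma_i=2\sigma_j\wedge\sigma_k$ (Proposition \ref{dsigmas}), whereas Joyce's conventions effectively rescale the round metric on $S^3$. I would also check the relation $\omega=-\tfrac12 d(I\,d\varphi)$ from the proof of Theorem \ref{KahlerPot} against the convention in \cite{JoyceBook}. Rescaling the $\sigma_i$ by a factor of $2$, or equivalently changing the convention $\omega=i\partial\bar\partial\varphi$ by the corresponding constant, absorbs the factor.

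I expect this convention bookkeeping to be the only real obstacle, since the identity itself is a short computation. To handle it, I would compare the leading flat term $r^2/2$ versus $r^2$ with the flat K\"ahler potential in each normalisation, which pins down the factor unambiguously.
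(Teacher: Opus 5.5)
Your computation of $2\varphi_a(u)=\phi_a(u)$ is correct and is essentially the paper's proof. The paper starts from $\phi_a$, sets $s=\sqrt{u^4+a^4}$, and writes $\log u^2=\tfrac12\log\bigl((s-a^2)(s+a^2)\bigr)$; this rests on the same identity $u^4=(s-a^2)(s+a^2)$ that you use when rationalising.

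The paper's proof stops there and gives no argument for the ``same K\"ahler form'' clause. If you want to justify that clause, drop the coframe explanation. Renormalising the $\sigma_i$ only changes how the fixed 2-form $\omega_I$ is written, not $\omega_I$ itself or its potential. So it is not equivalent to changing the potential convention. Replacing $\sigma_i$ by $2\sigma_i$ in the metric itself would not help either, since it would make the asymptotic cone non-flat.

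The factor $2$ comes purely from the convention: $\omega=i\partial\bar\partial\varphi$ here, versus one equivalent to $\omega=\tfrac{i}{2}\partial\bar\partial\phi$ for $\phi_a$. Your flat-term check shows this directly, since $i\partial\bar\partial(u^2/2)=\tfrac{i}{2}\partial\bar\partial(u^2)=\omega_{\mathbf{T}^4}$ with $u^2=|z_1|^2+|z_2|^2$.
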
 

\textbf{Proof:} We begin with $\phi_a$ and use the following substitution, to simplify the expression, $s=\sqrt{u^4+a^4}$ $\implies u^2=\sqrt{s^2-a^4}$:
\begin{align}
    \begin{split}
    \phi_a(u)&=\sqrt{u^4+a^4}-a^2\log\left(\frac{\sqrt{u^4+a^4}+a^2}{u^2}\right)=s+a^2\log(u^2)-a^2\log(s+a^2)\\
    &=s+\frac{a^2}{2}\log(s^2-a^4)-a^2\log(s+a^2)=s+\frac{a^2}{2}\log((s-a^2)(s+a^2))-a^2\log(s+a^2)\\
    &=s+\frac{a^2}{2}(\log(s-a^2)-\log(s+a^2))=s+\frac{a^2}{2}\log\left(\frac{s-a^2}{s+a^2}\right)\\
    &=\sqrt{u^4+a^4}+\frac{a^2}{2}\log\left(\frac{\sqrt{u^4+a^4}-a^2}{\sqrt{u^4+a^4}+a^2}\right)=2\varphi_a(u)
    \end{split}
\end{align}
\qed

\section{The Gibbons-Hawking Ansatz}

In addition to the applications of the Eguchi-Hanson space in regards to the Kummer surface, it also comes with strong motivation from physics. Some of this physical importance can be seen in the original paper describing this space in 1978, \cite{EHmetric}. It is also seen by its equivalence to a Gibbons-Hawking anstaz with 2 centers, which we show explicitly below \ref{EHtoGH}.

\begin{defin}
     For $\mathbf{x}_i\in\mathbf{R}^3$, take $U=\mathbf{R}^3\backslash\{\mathbf{x}_1, ..., \mathbf{x}_k\}$, fix 'charges', $n_i\in\mathbf{Z}$, and fix $\epsilon\geq 0$. Then define the following harmonic function on $U$:
    \begin{align}
        V(\textbf{x})=\epsilon +\sum_i\frac{n_i}{|\mathbf{x}-\mathbf{x}_i|}
    \end{align}
    Let $\pi:M\rightarrow U$ be a principal $S^1$ bundle over $U$ with a connection 1-form $A$ satisfying $\nabla\times A=\nabla V$, where $\nabla\times$ and $\nabla$ are taken with respect to the Euclidean metric on $\mathbf{R}^3$, $d\mathbf{x}.d\mathbf{x}$. Then the \textbf{Gibbons-Hawking ansatz} is the following metric on $M$:
    \begin{align}
        ds^2=V(\textbf{x})^{-1}(d\psi+A)^2+V(\textbf{x})d\textbf{x}.d\textbf{x}
    \end{align} \cite{GH1979}
\end{defin}


\begin{theorem} \label{EHtoGH}
    The Gibbons-Hawking ansatz with 2 centers, of 'charge' $1$, and $\epsilon=0$ is isometric to the Eguchi-Hanson metric \ref{EHspace}, up to rescaling and choice of coordinates.
\end{theorem}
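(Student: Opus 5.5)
Place the two centres symmetrically on the $x_3$-axis at $\mathbf{x}_\pm=(0,0,\pm c)$ and use prolate spheroidal (elliptic) coordinates on $\mathbf{R}^3$ adapted to them. Write $R_\pm=|\mathbf{x}-\mathbf{x}_\pm|$ and set
\begin{align}
    R_++R_-=2c\cosh\eta,\qquad R_+-R_-=2c\cos\theta,
\end{align}
with $\phi$ the azimuthal angle about the axis. In these coordinates the flat metric becomes
\begin{align}
    d\mathbf{x}.d\mathbf{x}=c^2(\cosh^2\eta-\cos^2\theta)(d\eta^2+d\theta^2)+c^2\sinh^2\eta\sin^2\theta\,d\phi^2 .
\end{align}
The potential is
\begin{align}
    V=\frac{1}{R_+}+\frac{1}{R_-}=\frac{R_++R_-}{R_+R_-}=\frac{4\cosh\eta}{2c(\cosh^2\eta-\cos^2\theta)},
\end{align}
using $R_+R_-=c^2(\cosh^2\eta-\cos^2\theta)$.

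Next, find the connection. By axial symmetry take $A=A_\phi\,d\phi$. The monopole potential of a single centre is $\cos\theta_\pm\,d\phi$, where $\theta_\pm$ is the polar angle about $\mathbf{x}_\pm$, so
\begin{align}
    A=(\cos\theta_++\cos\theta_-)\,d\phi .
\end{align}
Expressing $\cos\theta_\pm=(x_3\mp c)/R_\pm$ in spheroidal coordinates should give
\begin{align}
    A=\frac{2\sinh^2\eta\cos\theta}{\cosh^2\eta-\cos^2\theta}\,d\phi,
\end{align}
possibly up to sign and an additive constant, which is a gauge choice. I would verify this directly by checking $dA=*_3dV$, the form version of $\nabla\times A=\nabla V$. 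This computation, together with getting the periodicity of $\psi$ right, is the main obstacle: the Dirac-string constants must be chosen so that $\psi$ combines with $\phi$ into the Euler angle $\varphi\in(0,4\pi]$ with the correct $\mathbf{Z}_2$ quotient, giving $S^3/\{\pm1\}$ rather than $S^3$.

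Then substitute and regroup. Set $\psi=\varphi'$, suitably rescaled, and look for a radial change of variables $r=r(\eta)$. The natural guess is $r^2=a^2\cosh\eta$ with $a^2$ proportional to $c$, since then
\begin{align}
    1-\frac{a^4}{r^4}=\frac{\sinh^2\eta}{\cosh^2\eta} .
\end{align}
Expand $V^{-1}(d\psi+A)^2+V\,d\mathbf{x}.d\mathbf{x}$ and complete squares so that the $d\phi^2$, $d\psi\,d\phi$ and $d\psi^2$ terms reorganise into
\begin{align}
    \frac{r^2}{4}\left[\left(1-\frac{a^4}{r^4}\right)(d\psi+\cos\theta\,d\phi)^2+\sin^2\theta\,d\phi^2\right].
\end{align}
The key identity used here is that $(\cosh^2\eta-\cos^2\theta)$ cancels between $V$, the flat metric and $A$. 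After this, the $d\theta^2$ term should give $\frac{r^2}{4}d\theta^2$, and the $d\eta^2$ term should give $(1-a^4/r^4)^{-1}dr^2$ via $dr=\frac{a^2\sinh\eta}{2r}d\eta$.

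Finally, fix the overall constant scale and the range of $\psi$ so that the result is exactly the metric in \ref{EHspace}, with $\eta\to0$ corresponding to the bolt $r=a$. This is the segment between the centres, which is the $S^2$ added in the definition of $X_{EH}$. Doing so exhibits the isometry up to rescaling and coordinates, as claimed.
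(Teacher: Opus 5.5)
Your set-up is the paper's: prolate spheroidal coordinates and $r^2\propto\cosh\eta$. Your $A=\frac{2\sinh^2\eta\cos\theta}{\cosh^2\eta-\cos^2\theta}\,d\phi$ is correct, and the $d\eta^2$, $d\theta^2$ checks work. However, the regrouping step fails as you have set it up, because you put the wrong circle in the role of the Eguchi--Hanson fibre angle. After substitution,
\begin{align*}
ds^2=2c\cosh\eta\,(d\eta^2+d\theta^2)+\frac{c(\cosh^2\eta-\cos^2\theta)}{2\cosh\eta}\,d\psi^2+\frac{2c\sinh^2\eta\cos\theta}{\cosh\eta}\,d\psi\,d\phi+\frac{2c\sinh^2\eta}{\cosh\eta}\,d\phi^2 .
\end{align*}
The factor $\cosh^2\eta-\cos^2\theta$ cancels everywhere except in the $d\psi^2$ coefficient. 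That coefficient is $|\partial_\psi|^2=V^{-1}$, and it genuinely depends on $\theta$. In your target $\frac{r^2}{4}\bigl[(1-\frac{a^4}{r^4})(d\psi+\cos\theta\,d\phi)^2+\sin^2\theta\,d\phi^2\bigr]$, the $d\psi^2$ coefficient is $\frac{r^2}{4}(1-\frac{a^4}{r^4})$, a function of $\eta$ alone. A constant rescaling of $\psi$ cannot fix this. Neither can a gauge shift $\psi\mapsto\psi+k\phi$, since it leaves the Killing field $\partial_\psi$ and its length unchanged. So with $\psi\leftrightarrow\varphi$ and $\phi\leftrightarrow\phi$, no completion of squares yields the Eguchi--Hanson form. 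Conceptually, $\partial_\psi$ preserves all three Gibbons--Hawking K\"ahler forms. By contrast, $\partial_\varphi$ on $X_{EH}$ rotates $\sigma_1,\sigma_2$, and hence $\omega_J,\omega_K$. The tri-holomorphic circle on $X_{EH}$ is generated by $\partial_\phi$, because the $\sigma_i$ do not depend on $\phi$.

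The repair is to swap the two angles and complete the square in $d\phi$ instead. The angular part becomes
\begin{align*}
\frac{2c\sinh^2\eta}{\cosh\eta}\Bigl(d\phi+\tfrac12\cos\theta\,d\psi\Bigr)^2+\frac{c\cosh\eta}{2}\sin^2\theta\,d\psi^2 .
\end{align*}
Take $r^2=8c\cosh\eta$, i.e.\ $a^2=8c$; the paper's $a^2=2\alpha$, with $\alpha$ your $c$, gives the same up to an overall factor $4$. Then the Gibbons--Hawking azimuth $\phi$ becomes the Eguchi--Hanson fibre angle $\varphi$, and $\psi/2$ becomes the Eguchi--Hanson azimuth. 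This identification is implicit in the paper's notation: it names the Gibbons--Hawking azimuth $\varphi$ and lets it reappear in $d\varphi+\cos\theta\,d\phi$.

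Your periodicity worry also changes character. Smoothness at the centres forces $\psi$ to have period $4\pi$, so $\psi/2$ has period $2\pi$. The azimuth $\phi$ has period $2\pi$, which is the period of the fibre angle on $S^3/\{\pm1\}$ (it would be $4\pi$ on $S^3$). The $\mathbf{Z}_2$ quotient is forced by there being two centres, since the circle bundle at infinity has degree $2$. It does not come from a choice of Dirac-string constants.
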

\textbf{Proof:} For simplicity we are going to take the 2 centers to lie on the z axis at $\pm \alpha$. So, we have $|\mathbf{x}-\mathbf{x}_i|^2=x^2+y^2+z_i^2=R_i^2$, where $z_1=z+\alpha$ and $z_2=z-\alpha$, $x$ and $y$ are the difference in the $x$ and $y$ coordinates. We can then rewrite the Gibbons-Hawking ansatz as:
 \begin{align}
     ds^2=\left(\frac{1}{R_1}+\frac{1}{R_2}\right)^{-1}(d\psi+A)^2+\left(\frac{1}{R_1}+\frac{1}{R_2}\right)d\mathbf{x}.d\mathbf{x}
 \end{align}
We now need to find an expression for $A$ and to do this we will exploit the relationship $\nabla\times A=\nabla V$, we will also swap to cylindrical coordinates, $(\rho, z, \varphi)$ here to make things simpler, note $R_i$'s will only be in terms of $\rho=\sqrt{x^2+y^2}$ and $z$, so we have that $R_1=\sqrt{\rho^2+(z+\alpha)^2}$ and $ R_2=\sqrt{\rho^2+(z-\alpha)^2}$ also take $\{e_\rho, e_z, e_\varphi\}$ as a orthonormal frame:
\begin{align}
\begin{split}
    \text{grad} V&=\text{grad}\left(\frac{1}{R_1}+\frac{1}{R_2}\right)= \partial_\rho\left(\frac{1}{R_1} +\frac{1}{R_2}\right)e_\rho+\partial_z\left(\frac{1}{R_1}+\frac{1}{R_2}\right)e_z\\
    &=-\left(\frac{\rho}{R_1^3}+\frac{\rho}{R^3_2}\right)e_\rho-\left(\frac{z_+}{R_1^3}+\frac{z_-}{R^3_2}\right)e_z
    \end{split}
\end{align}
As we have that $\nabla\times A=\nabla V$ we can write $A$ as a 1-form with only $d\varphi$, so we have that $A=A_\varphi(\rho, z)d\varphi$. This relationship then reduces to the following set of equations:
\begin{align}
-\partial_zA_\varphi=\partial_\rho V \text{ and }\frac{1}{\rho}\partial_\rho(\rho A_\varphi)=\partial_z V
\end{align}
Dealing with the first of these we integrate with respect to $z$ we will deal with each $\frac{\rho}{R^3_i}$ to make things clearer:
\begin{align}
    \int\frac{\rho}{R^3_i}dz=\int\frac{\rho}{(\rho^2+(z\pm \alpha)^2)^{3/2}}dz=\frac{z\pm \alpha}{\rho R_i}+C
\end{align}
Observe that by a suitable choice of gauge we can take $C=0$. This then gives us that:
\begin{align}
    A=\left(\frac{z+\alpha}{\rho R_1}+\frac{z-\alpha}{\rho R_2}\right)d\varphi
\end{align}
At this point, we should rewrite the Gibbons-Hawking ansatz in terms of the cylindrical coordinates we have been using:
\begin{align}
    ds^2=V^{-1}(d\psi+A)^2+V(d\rho^2+\rho^2 d\varphi^2+dz^2)
\end{align}
 Now most of the work is done and we have to take a couple of transformation to get to the familiar Eguchi-Hanson space. The first of these is to transform to a prolate spheroid with focuses at $\pm \alpha$ using coordinates $(\mu,\nu, \varphi)$, $\mu=\frac{R_1+R_2}{2\alpha}\geq 1$, $\nu=\frac{R_1-R_2}{2\alpha}\in [-1,1]$. Applying this transformation we have $V=\frac{2\mu}{\alpha(\mu^2-\nu^2)}$ and $A=\frac{2\nu(\mu^2-1)}{\rho(\mu^2-\nu^2)}d\varphi$. The last element to calculate is $d\mathbf{x}^2$ in these new coordinates, as this is routine but quite long it has been omitted:
 \begin{align}
     d\mathbf{x}^2=\alpha^2(\mu^2-\nu^2)\left(\frac{d\mu^2}{\mu^2-1}+\frac{d\nu^2}{1-\nu^2}\right)+\alpha^2(\mu^2-1)(1-\nu^2)d\varphi^2
 \end{align}
So our Gibbons-Hawking ansatz becomes:
\begin{align}
    ds^2=V^{-1}(d\psi+A)^2+V(\alpha^2(\mu^2-\nu^2)\left(\frac{d\mu^2}{\mu^2-1}+\frac{d\nu^2}{1-\nu^2}\right)+\alpha^2(\mu^2-1)(1-\nu^2)d\varphi^2)
\end{align}
We now have another series of transformations to get the familiar Eguchi-Hanson metric. Take $\mu=\cosh(u)=\frac{r^2}{2\alpha}$ and $\nu=\cos(\theta)$, $u\geq 0, \theta\in[0,\pi]$, these are taken as they give a useful pair of relationships, $\mu^2-1=\sinh^2(u)$ and $1-\nu^2=\sin^2(\theta)$, the variable $r$ is included as this makes things easier to write. Now, rewriting the metric with these variables gives us:
\begin{align}
    ds^2=&\frac{\alpha(\cosh^2u-\cos^2\theta)}{2\cosh u}\left(d\psi+\frac{2\cos\theta\sinh^2 u}{\cosh^2u-\cos^2\theta}d\varphi\right)^2\\&+2\alpha\cosh u\left(du^2+d\theta^2+\frac{\sinh^2 u\sin^2\theta}{\cosh^2u-\cos^2\theta}d\varphi^2\right)
\end{align}
Now recalling how we defined our $\sigma_i$'s in \ref{EHspace}:
\begin{align}
    \begin{split}
    \sigma_1&=\frac{1}{2}(-\cos(\varphi)d\theta-\sin(\theta)\sin(\varphi)d\phi)\\
    \sigma_2&=\frac{1}{2}(\sin(\varphi)d\theta-\sin(\theta)\cos(\varphi)d\phi)\\
    \sigma_3&=\frac{1}{2}(-d\varphi -\cos(\theta)d\phi)
    \end{split}
\end{align}
By expanding the above expression, putting $\sigma_i$'s into the expression and using $a^2=2\alpha$ we recover the familiar Eguchi Hanson metric \ref{EHspace}:
\begin{align}
    &ds^2=\left(1-\frac{a^4}{r^4}\right)^{-1}dr^2+\frac{r^2}{4}\left[\left(1-\frac{a^4}{r^4}\right)(d\varphi+\cos(\theta))^2+d\theta^2+\sin^2(\theta)d\phi^2\right]   
\end{align} \qed

\chapter{A Calabi-Yau Metric on the Kummer Surface}

\section{The Kummer Surface}

In this section, we provide a construction for the well known Kummer surface. Note, this is not the only known way to construct this surface; in fact, there are quite a number of different ways to construct this same surface. This construction will quotient $\mathbf{T}^4$ by $\mathbf{Z}_2$ and revolve the singularities by blowing them up and then identify these blow ups with the Eguichi-Hanson space we discussed in the previous section, \ref{EHspace}. \\
\\

Begin by considering the 4-torus $\mathbf{T}^4=\mathbf{C}^2/\Lambda$. We will then quotient by the following involution $\sigma:\mathbf{T}^4\rightarrow\mathbf{T}^4$ where $\sigma(z_1,z_2)=(-z_1, -z_2)$ for $ z_1, z_2\in \mathbf{C}$. 
\begin{prop}
    The map, $\sigma:\mathbf{T}^4\rightarrow\mathbf{T}^4$, has 16 fixed points, $p_1, ..., p_{16}$, and so the quotient $\mathbf{T}^4/\sigma$ is a manifold except for 16 singular points.
\end{prop}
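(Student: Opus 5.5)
The plan is to work in the universal cover $\mathbf{C}^2\cong\mathbf{R}^4$ and lift the fixed point condition there. A point $[z]\in\mathbf{T}^4=\mathbf{C}^2/\Lambda$ is fixed by $\sigma$ exactly when $-z\equiv z \pmod \Lambda$, that is, when $2z\in\Lambda$, or equivalently $z\in\frac{1}{2}\Lambda$.

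The first step is to count these points. Choose a real basis $\lambda_1,\dots,\lambda_4$ of the lattice $\Lambda\cong\mathbf{Z}^4$, so that $\Lambda$ is a full-rank lattice in $\mathbf{R}^4$. Write $z=\sum_j t_j\lambda_j$ with $t_j\in\mathbf{R}$. The condition $2z\in\Lambda$ then reads $2t_j\in\mathbf{Z}$ for every $j$. Modulo $\Lambda$, each $t_j$ may be reduced into $[0,1)$, which leaves $t_j\in\{0,\tfrac12\}$. Hence the fixed point set is $\frac12\Lambda/\Lambda\cong(\mathbf{Z}/2)^4$, giving exactly $2^4=16$ points $p_1,\dots,p_{16}$. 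Distinct choices of the $t_j$ give distinct classes, because their difference has coefficients in $\{0,\pm\frac12\}$ and so is not in $\Lambda$ unless it is zero.

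The second step is to show the quotient is a manifold away from these points. Since $\sigma$ is an involution, $\{1,\sigma\}\cong\mathbf{Z}_2$ acts on $\mathbf{T}^4$ and acts freely on $\mathbf{T}^4\setminus\{p_1,\dots,p_{16}\}$. A free action of a finite group on a manifold has a manifold quotient: for a non-fixed $x$, pick a small ball $B$ around $x$ with $B\cap\sigma(B)=\emptyset$ (possible by Hausdorffness, since $\sigma(x)\neq x$ and $\sigma$ is continuous). The projection then maps $B$ homeomorphically, indeed holomorphically, onto its image, and this gives charts. Moreover $\sigma$ is holomorphic and an isometry of the flat metric, so the quotient inherits a complex structure and a flat Kähler metric away from the $p_i$.

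The third step, which I expect to be the main point needing care, is to justify calling the $p_i$ genuinely singular. Near each $p_i$, translate so that $p_i$ corresponds to $0$. A small neighbourhood then looks like $B/\{\pm1\}$ for a ball $B\subset\mathbf{C}^2$, which is the cone $\mathbf{C}^2/\{\pm1\}$, with link $S^3/\{\pm1\}=\mathbf{RP}^3$. This is not a smooth point: removing $p_i$ from a small neighbourhood leaves a space with fundamental group $\mathbf{Z}_2$, whereas a punctured ball in $\mathbf{R}^4$ is simply connected. The same local model is exactly the asymptotic end $\{r>a\}\times S^3/\{\pm1\}$ of the Eguchi-Hanson space in \ref{EHspace}, which motivates the later resolution.
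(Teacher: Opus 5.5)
Your proof is correct and follows the same route as the paper. Both lift to $\mathbf{C}^2$, reduce the fixed-point condition to $2z\in\Lambda$, and identify the fixed points with the $2^4$ half-lattice points $\frac{1}{2}\Lambda/\Lambda$, which the paper simply lists in lattice coordinates. Your second and third steps add the freeness of the action away from the $p_i$ and the non-manifold nature of the cone over $\mathbf{RP}^3$, both of which the paper only asserts.

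The $\pi_1$ comparison needs one tightening: it must rule out every small neighbourhood of $p_i$, not just the cone neighbourhood you computed. One way is to sandwich a putative Euclidean ball $V$ between cone neighbourhoods $C_{\epsilon_2}\subset V\subset C_{\epsilon_1}$. The inclusion $C_{\epsilon_2}\setminus\{p_i\}\hookrightarrow C_{\epsilon_1}\setminus\{p_i\}$ is an isomorphism of fundamental groups (both $\mathbf{Z}_2$), yet it would factor through the simply connected $V\setminus\{p_i\}$. Alternatively, the local homology $H_2(\mathbf{T}^4/\sigma,\,(\mathbf{T}^4/\sigma)\setminus\{p_i\})\cong\tilde H_1(\mathbf{RP}^3)\cong\mathbf{Z}_2\neq 0$ gives the same conclusion.
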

\textbf{Proof:} A point $z\in \mathbf{C}^2$ is fixed iff $z=-z\pmod\Lambda$, so $2z\in\Lambda$. So we can write the set of fixed points as $\{z\in\mathbf{T}^4:2z=0 \text{ in } \mathbf{T}^4\}$. We can then just wite out the fixed points under this map:
\begin{align}
    \begin{split}
    &(0,0,0,0), \left(0,0,0,\frac{1}{2}\right), \left(0,0,\frac{1}{2},0\right), \left(0,\frac{1}{2},0,0\right),\left(\frac{1}{2},0,0,0\right)
    \left(0,0,\frac{1}{2},\frac{1}{2}\right), \left(0,\frac{1}{2},0\frac{1}{2}\right),\\ &\left(\frac{1}{2},0,0,\frac{1}{2}\right)
    \left(0,\frac{1}{2},\frac{1}{2},0\right),\left(\frac{1}{2},0,\frac{1}{2},0\right), \left(\frac{1}{2},\frac{1}{2},0,0\right), \left(0,\frac{1}{2},\frac{1}{2},\frac{1}{2}\right), \left(\frac{1}{2},0,\frac{1}{2},\frac{1}{2}\right),\left(\frac{1}{2}, \frac{1}{2}, 0, \frac{1}{2}\right),\\&\left(\frac{1}{2}, \frac{1}{2}, \frac{1}{2}, 0\right)
     \text{and } \left(\frac{1}{2}, \frac{1}{2}, \frac{1}{2}, \frac{1}{2}\right)
    \end{split}
\end{align}
  so we have 16 fixed points. When we quotient by a map with 16 fixed points this gives us the 16 singular points, $(p_1, ..., p_{16})$. \qed\\
\\

$\mathbf{T}^4/\sigma$ can be called an orbifold, as it is a topological space equipped with charts, much like a manifold except that each chart is modeled as $\mathbf{R}^n/G$ where $G$ is a finite group. Around each of the singular points we can model our orbifold, $\mathbf{T}^4/\sigma$, as $\mathbf{C}^2/\{\pm1\}$ with the singular point at zero, and the involution on this local model given by $\sigma_0$, this follows from our definition of $\sigma$. Now consider 16 disjoint regions, $\tilde{V}_1, ..., \tilde{V}_{16}$, each of which contains a singular point, each of these regions can be seen as $B_{p_i}(\frac{1}{9})/\{\pm1\}$. We are now going to apply a blow-up centered at the singular point in each of these regions.

\begin{prop}
    The map $\sigma_0$ lifts to a map $\bar{\sigma}$ in the blow-up, $Bl_0(\mathbf{C}^2)$. \label{prop1}
\end{prop}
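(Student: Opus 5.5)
We will argue with explicit coordinates, using the standard description of the blow-up
\begin{align}
    Bl_0(\mathbf{C}^2)=\{((z_1,z_2),[w_1:w_2])\in\mathbf{C}^2\times\mathbf{CP}^1 : z_1w_2=z_2w_1\},
\end{align}
together with the projection $\pi:Bl_0(\mathbf{C}^2)\rightarrow\mathbf{C}^2$, $\pi((z_1,z_2),[w_1:w_2])=(z_1,z_2)$. The map $\pi$ restricts to a biholomorphism from $Bl_0(\mathbf{C}^2)\backslash\pi^{-1}(0)$ onto $\mathbf{C}^2\backslash\{0\}$, and the exceptional set is $E=\pi^{-1}(0)=\{0\}\times\mathbf{CP}^1$.

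The candidate lift is the map
\begin{align}
    \bar\sigma((z_1,z_2),[w_1:w_2])=((-z_1,-z_2),[w_1:w_2]),
\end{align}
that is, $\sigma_0\times\text{id}_{\mathbf{CP}^1}$. The key steps, in order, are as follows.

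\textbf{Well defined.} The incidence relation is preserved, since $(-z_1)w_2=(-z_2)w_1$ holds if and only if $z_1w_2=z_2w_1$. Moreover $[w_1:w_2]$ is left unchanged, so there is no ambiguity of homogeneous representative. Equivalently, $[-w_1:-w_2]=[w_1:w_2]$, so the map agrees with $\sigma_0\times\mathbf{P}(\sigma_0)$, the natural map induced on lines.

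\textbf{Lift.} We check that $\pi\circ\bar\sigma=\sigma_0\circ\pi$, which is immediate from the formula.

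\textbf{Holomorphic involution.} We verify that $\bar\sigma$ is holomorphic in the two standard charts $U_1=\{w_1\neq0\}$ and $U_2=\{w_2\neq0\}$. On $U_1$ we use coordinates $(z_1,t)$ with $z_2=tz_1$, and there $\bar\sigma(z_1,t)=(-z_1,t)$. The chart $U_2$ behaves symmetrically, with $(s,z_2)\mapsto(s,-z_2)$. These are linear maps, so $\bar\sigma$ is holomorphic, and clearly $\bar\sigma^2=\text{id}$.

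\textbf{Uniqueness and fixed set.} Because $\pi$ is a biholomorphism off $E$ and $E$ is nowhere dense, any continuous lift must agree with $\bar\sigma$ on a dense set, so the lift is unique. We also record that $\bar\sigma$ fixes $E$ pointwise and has no other fixed points. This is what will make $Bl_0(\mathbf{C}^2)/\bar\sigma$ smooth later: in the chart above, the quotient by $(z_1,t)\mapsto(-z_1,t)$ is smooth, with coordinates $(z_1^2,t)$.

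The main obstacle is a matter of care rather than difficulty: making sure the action on the exceptional $\mathbf{CP}^1$ is genuinely well defined, since the coordinates $w_i$ are only homogeneous. This is exactly the point where we use that $\sigma_0=-\text{id}$ is linear and acts as a scalar, so that it induces the identity on the space of lines through the origin.
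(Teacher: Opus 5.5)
Your proposal is correct and follows the same route as the paper. The paper writes down the same lift $\bar\sigma((x,y),[u,v])=((-x,-y),[u,v])$ and checks only $\pi\circ\bar\sigma=\sigma_0\circ\pi$; your further checks (the incidence relation is preserved, holomorphicity in charts, uniqueness of the lift, and that $\bar\sigma$ fixes exactly the exceptional curve with local quotient coordinates $(z_1^2,t)$) are not in the paper, and the last one supplies the justification missing from the paper's next proposition on the smoothness of $Bl_0(\mathbf{C}^2)/\langle\bar\sigma\rangle$.
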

\textbf{Proof:} By the definition of the blow-up map, found in \cite[p.98]{Cman}, we have that $Bl_0(\mathbf{C}^2)=\{((x,y), [u,v])\in\mathbf{C}^2\times\mathbf{CP}^1:xv=yu\}$. We can now define maps, $\bar{\sigma}:Bl_0(\mathbf{C}^2)\rightarrow Bl_0(\mathbf{C}^2)$ and $\pi:Bl_0(\mathbf{C}^2)\rightarrow \mathbf{C}^2$, in the following natural way:
\begin{align}
    \begin{split}
    \bar\sigma((x,y),[u,v])&=((-x, -y), [u,v])\\
    \pi((x,y), [u,v])&=(x,y)
    \end{split}
\end{align}
Considering the following:
\begin{align}
    \begin{split}
    \pi\circ\bar\sigma((x,y), [u,v])=(-x, -y)=\sigma_0\circ\pi((x,y),[u,v]), 
    \end{split}
\end{align}
which shows us that $\bar\sigma$ is a lift of $\sigma_0$ as desired. \qed
\begin{prop}
    The quotient $Bl_0(\mathbf{C}^2)/\langle\bar\sigma\rangle$ is a complex manifold of complex dimension 2. 
\end{prop}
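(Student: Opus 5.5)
The plan is to cover $Bl_0(\mathbf{C}^2)$ by the two standard affine charts and show that $\bar\sigma$ has no isolated fixed points in either chart. Concretely, I will show that in each chart $\bar\sigma$ acts as a reflection in a single complex coordinate. The quotient is then locally the image of a map that is visibly holomorphic and bijective, and it gives the charts directly.

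Write
\[
U_1=\{u\neq 0\},\quad (x,t)\mapsto ((x,xt),[1,t]),
\]
\[
U_2=\{v\neq 0\},\quad (s,y)\mapsto ((sy,y),[s,1]).
\]
Both charts are biholomorphic to $\mathbf{C}^2$, and on the overlap the transition map is $s=1/t$, $y=xt$. Since $\bar\sigma((x,y),[u,v])=((-x,-y),[u,v])$, the map $\bar\sigma$ preserves each $U_i$. In coordinates it acts by $(x,t)\mapsto(-x,t)$ on $U_1$ and by $(s,y)\mapsto(s,-y)$ on $U_2$.

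On $U_1$ the $\langle\bar\sigma\rangle$-invariant holomorphic functions are generated by $w=x^2$ and $t$. The map $U_1\to\mathbf{C}^2$, $(x,t)\mapsto(x^2,t)$, is surjective, and its fibres are exactly the $\bar\sigma$-orbits, because $x^2=x'^2$ forces $x'=\pm x$. It therefore descends to a continuous bijection $U_1/\langle\bar\sigma\rangle\to\mathbf{C}^2$. This bijection is a homeomorphism because the quotient map is open and $x\mapsto x^2$ is open. The same argument on $U_2$ gives coordinates $(s,y^2)$.

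Next I check that the charts are holomorphically compatible. On the overlap $t\neq 0$, the transition is
\[
(w,t)\mapsto(s,y^2)=(1/t,\,wt^2).
\]
This is holomorphic with holomorphic inverse $(s,z)\mapsto(z s^2,1/s)$, so the two charts form a holomorphic atlas of complex dimension $2$.

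For Hausdorffness and second countability, I note that $Bl_0(\mathbf{C}^2)$ is Hausdorff and second countable and that the group is finite. The quotient of a Hausdorff space by a finite group is Hausdorff, and second countability passes to the quotient since the quotient map is open.

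The only genuinely delicate step is showing that the quotient is smooth at the fixed locus. The fixed set of $\bar\sigma$ is the exceptional curve $\{x=y=0\}\cong\mathbf{CP}^1$. At these points the group does not act freely, so the usual free-quotient theorem does not apply. The local computation above resolves this. Because the fixed set has complex codimension one, $\bar\sigma$ acts near it as a complex reflection, and $\mathbf{C}/\{\pm1\}\cong\mathbf{C}$ via $x\mapsto x^2$. This contrasts with the isolated fixed point of $\sigma_0$ on $\mathbf{C}^2$, where $\mathbf{C}^2/\{\pm1\}$ is singular. I would emphasise this point, since it is exactly why the blow-up resolves the singularities.
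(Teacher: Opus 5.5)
Your proof is correct. It uses the same two affine charts $U_1,U_2$ as the paper, but it proves the quotient statement, which the paper's argument does not actually establish.

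The paper computes the transition maps $\phi_{12}(x,t)=(1/t,xt)$ and $\phi_{21}(s,y)=(sy,1/s)$ and concludes that $Bl_0(\mathbf{C}^2)$ is a complex manifold. It then passes to the quotient only by citing the existence of the lift $\bar\sigma$ (Proposition \ref{prop1}), and never examines the fixed points of $\bar\sigma$. That step cannot suffice on its own. As you point out, $\mathbf{C}^2/\{\pm1\}$ is also the quotient of a complex manifold by a holomorphic involution, and it is singular.

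Your argument supplies exactly what is missing. You show that $\bar\sigma$ acts as $(x,t)\mapsto(-x,t)$ and $(s,y)\mapsto(s,-y)$, i.e.\ as a reflection whose fixed set is the exceptional curve, of complex codimension one. You then use the invariant coordinates $(x^2,t)$ and $(s,y^2)$, and check the transition $(w,t)\mapsto(1/t,wt^2)$. Together with the Hausdorff and second-countability checks, this gives a genuine holomorphic atlas on the quotient.

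The paper's version gains only brevity. Yours gives an actual proof, and it also identifies the quotient concretely: the rule $z=wt^2$ is the transition function of the total space of $\mathcal{O}(-2)\to\mathbf{CP}^1$. That identification shows why blowing up resolves the $\mathbf{C}^2/\{\pm1\}$ singularity.
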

\textbf{Proof:} First observe that the only condition which is not immediate is that the transition maps are holomorphic. Now, observe that $Bl_0(\mathbf{C}^2)$ is covered by two charts $U_1=\{((x,y),[u,v])\in Bl_0(\mathbf{C}^2):u\neq0\}$ and $U_2=\{((x,y),[u,v])\in Bl_0(\mathbf{C}^2):v\neq0\}$. Also, recall the definition for the map $\pi:Bl_0(\mathbf{C}^2)\rightarrow \mathbf{C}^2$ given in the proof of the last proposition. Taking a point, $((x,y),[u,v])$ in $U_1$ we can write it as $((x,y),[1,t])$ with $t=\frac{v}{u}$ and as we are in $Bl_0(\mathbf{C}^2)$ this implies $xt=y$, so $\pi((x,y), [u,v])=(x,xt)$. Similarly in $U_2$ we get that $\pi((x,y), [u,v])=(sy,y)$ where $s=\frac{u}{v}$. Now, define the following parameterizations $\phi_1:\mathbf{C}^2\rightarrow U_1$ and $\phi_2:\mathbf{C}^2\rightarrow U_2$ in the following way:
\begin{align}
    \phi_1(x,t)=((x,xt,[1,t]) \text{ and }\phi_2(s,y)=((sy,y),[s,1])
\end{align}
Now, working on the intersection, $U_1\cap U_2$, we can consider the transition functions $\phi_{12}$ and $\phi_{21}$ and calculate them directly as follows:
\begin{align}
    \begin{split}
    \phi_{12}(x,t)&=\phi_2^{-1}\circ\phi_1(x,t)=\phi^{-1}_2((x,xt),[1,t])=\phi_2^{-1}((x,xt),[\frac{1}{t},1])=(\frac{1}{t},xt)\\
    \phi_{21}(s,y)&=\phi_1^{-1}\circ\phi_2(s,y)=\phi_1^{-1}((sy,y),[s,1])=\phi_1^{-1}((sy,y),[1,\frac{1}{s}])=(sy,\frac{1}{s})
    \end{split}
\end{align}
Both of these transition functions are holomorphic on $U_1\cap U_2$ and so we have that $Bl_0(\mathbf{C}^2)$ is a complex manifold and by the prior proposition \ref{prop1} we can conclude that $Bl_0(\mathbf{C}^2)/\{\pm1\}$ is a complex manifold of complex dimension 2 as desired. \qed\\
\\
Taking each of the regions $\tilde{V}_1, ..., \tilde{V}_{16}$, which we recall locally looks like $\mathbf{C}^2/\{\pm 1\}$, and replacing them by a blow up centred at the respective singularity, we will have the Kummer surface. We will call each of these replacement regions, $V_1, ..., V_{16}$, with the numbering chosen arbitrarily, each of these can be locally viewed as $Bl_0(\mathbf{C}^2)/\{\pm 1\}$. For simplicity we will think of each of these $V_i$ as balls centred about the respective singularity, these balls should have radii \label{zeta}$\zeta=\frac{1}{9}$, chosen so that $V_i\cap V_j=\emptyset, \forall i\neq j$. This can clearly be done as each of our singularities are isolated.

\begin{defin}
    The \textbf{Kummer surface} can be defined as follows:
    \begin{align}
        Z=(T^4 \backslash  \{p_1, ..., p_{16}\})/\langle\sigma\rangle\cup_\pi(\bigcup_{j=1}^{16}V_j) 
    \end{align}
  Here $\cup_\pi$ denotes the disjoint union up to the following equivalence relationship. For $\pi:Bl_0(\mathbf{C}^2)\rightarrow \mathbf{C}^2$ as given prior. We can then define $\sim$ for $z\in\bigcup_{i=1}^{16}\tilde{V}_i\backslash\{p_1, ..., p_{16}\}$ and $y\in \bigcup_{i=1}^{16}V_i$, $z\sim y$ iff $\pi(y)=z$. 
\end{defin}
\textbf{Note:} This can be thought of as matching up each of the $\tilde{V}_i$ and $V_i$ except for the points $p_i$ and $\pi^{-1}(p_i)$ respectively. The notation $Z$ is chosen to agree with another example of the Kummer construction, namely \cite{Simpap}. If one wishes to reconcile our definitions with those used in Donaldson's article note he deals with just a single region with one singular point and then observes it easily extends to all 16 points. Also note, Donaldson uses the notation $X$ to denote $\mathbf{T}^4\backslash\{p_1, ..., p_{16}\}/\sigma$ and $Y$ to represent a single $V_i$.

\begin{cor}
    The Kummer surface is a complex 2 dimensional manifold. 
\end{cor}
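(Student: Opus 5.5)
The plan is to show that $Z$ is covered by two kinds of open sets, each already known to be a complex surface, and that they are glued by biholomorphisms on their overlaps. The two pieces are $X^*=(\mathbf{T}^4\backslash\{p_1,\dots,p_{16}\})/\langle\sigma\rangle$ and the sixteen regions $V_1,\dots,V_{16}$. Each $V_j$ is an open subset of $Bl_0(\mathbf{C}^2)/\langle\bar\sigma\rangle$, so it is a complex manifold of complex dimension $2$ by the preceding proposition. For $X^*$, the point is that $\sigma(z_1,z_2)=(-z_1,-z_2)$ is holomorphic and acts freely on $\mathbf{T}^4\backslash\{p_1,\dots,p_{16}\}$, since by the fixed point proposition its only fixed points are the $p_i$. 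The group $\{1,\sigma\}$ is finite, so the action is also properly discontinuous. Hence the quotient is a complex manifold: charts come from small balls in $\mathbf{C}^2$ on which the projection is injective, and the transition maps are compositions of translations by $\Lambda$ and $\pm\mathrm{id}$, which are holomorphic.

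Next I will check the gluing. On an overlap, a point $y\in V_j\backslash\pi^{-1}(p_j)$ is identified with $\pi(y)\in\tilde V_j\backslash\{p_j\}$. The blow-down $\pi:Bl_0(\mathbf{C}^2)\backslash\pi^{-1}(0)\to\mathbf{C}^2\backslash\{0\}$ is a biholomorphism. Its inverse is $(x,y)\mapsto((x,y),[x,y])$, and in the charts $\phi_1,\phi_2$ from the previous proof this reads $(x,y)\mapsto(x,y/x)$ or $(x/y,y)$, which is holomorphic. The map $\pi$ intertwines $\bar\sigma$ and $\sigma_0$ by Proposition \ref{prop1}, so it descends to a biholomorphism $(V_j\backslash\pi^{-1}(p_j))/\bar\sigma\to(\tilde V_j\backslash\{p_j\})/\sigma_0$. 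Consequently the charts of $X^*$ and of the $V_j$ are holomorphically compatible on overlaps, and $Z$ carries a holomorphic atlas in which every chart has values in $\mathbf{C}^2$.

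Finally I will verify the topological conditions, Hausdorff and second countable. Second countability is immediate, since $Z$ is a finite union of second countable open sets. The Hausdorff property is the only slightly delicate point, because separating a point of the exceptional curve $\pi^{-1}(p_j)/\bar\sigma$ from a point of $X^*$ needs care. I will handle it by noting that such a point $x\in X^*$ projects to some point $\pi(x)\neq p_j$ in the orbifold. Take a neighbourhood of the exceptional curve of the form $\pi^{-1}(B_{p_j}(\epsilon))$, with $\epsilon$ smaller than the distance from $p_j$ to $\pi(x)$, together with a small ball around $x$; these two sets are disjoint. Pairs of points inside a single piece are separated within that piece. The choice $\zeta=\frac19$, which makes the $V_i$ pairwise disjoint, guarantees that the exceptional curves are separated from each other. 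I expect this Hausdorff check to be the main, if mild, obstacle; the remaining steps follow directly from the preceding propositions.
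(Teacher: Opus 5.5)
Your proposal is correct and takes the same route as the paper, which simply deduces the statement from the preceding propositions on $Bl_0(\mathbf{C}^2)/\langle\bar\sigma\rangle$. You additionally make explicit what the paper leaves implicit: the freeness of $\sigma$ off the $p_i$, the holomorphic compatibility of the gluing via $\pi$, and the Hausdorff and second-countability checks. Like the paper, you take the smoothness of $Bl_0(\mathbf{C}^2)/\langle\bar\sigma\rangle$ along the exceptional curve from the preceding proposition, even though $\bar\sigma$ fixes that curve pointwise. This is justified, because in the chart $\phi_1$ it acts as $(x,t)\mapsto(-x,t)$, so $(x^2,t)$ give holomorphic coordinates on the quotient.
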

\textbf{Proof:} This result follows directly from our prior proposition \ref{prop1}. \qed

\begin{prop}
    The Eguchi-Hanson space, $X_{EH}$, is diffeomorphic to $Bl_0(\mathbf{C}^2)/\{\pm1\}$. 
\end{prop}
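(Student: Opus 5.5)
We construct an explicit diffeomorphism. Both spaces fibre naturally over $\mathbf{CP}^1\cong S^2$ with the zero section (respectively the bolt $r=a$) as a distinguished sphere. On the blow-up side, $Bl_0(\mathbf{C}^2)$ is the total space of the tautological line bundle $\mathcal{O}(-1)\to\mathbf{CP}^1$. The lifted involution $\bar\sigma$ of Proposition \ref{prop1} acts as $-1$ on the fibres and fixes the zero section pointwise. Hence $Bl_0(\mathbf{C}^2)/\{\pm1\}$ is the total space of $\mathcal{O}(-1)^{\otimes 2}=\mathcal{O}(-2)=T^*\mathbf{CP}^1$, via the fibrewise squaring map $\ell\mapsto \ell\otimes\ell$. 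This map is smooth and bijective on the quotient, and it is a local diffeomorphism because $\bar\sigma$ acts freely away from the zero section, in the charts $\phi_1,\phi_2$ of the previous proposition. At the zero section, the chart $(x,t)\mapsto(x^2,t)$ on $\{x\}/\pm$ gives a smooth structure on the quotient, which we take as the definition of its manifold structure.

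On the Eguchi--Hanson side we use the coordinates of Definition \ref{EHspace}. Away from $r=a$, the space is $(a,\infty)\times S^3/\{\pm1\}$. The Hopf map $S^3\to S^2$, $(\theta,\phi,\varphi)\mapsto(\theta,\phi)$, descends to $S^3/\{\pm1\}=\mathbf{RP}^3$, whose fibre circle $\varphi\in(0,4\pi]$ mod the antipodal map is the unit circle bundle of $\mathcal{O}(-2)$. Indeed, $S^3$ is the unit circle bundle of $\mathcal{O}(-1)$, and quotienting by $-1$ in each fibre doubles the degree. We would define the map on the complement of the bolt by
\[
F(r,\theta,\phi,\varphi)=\bigl(p(\theta,\phi),\; f(r)\,e^{i\varphi/2}\bigr)\in \mathcal{O}(-2),
\]
with a trivialisation chosen so that $e^{i\varphi/2}$ is a well-defined unit fibre vector over $p$. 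Here $f:(a,\infty)\to(0,\infty)$ is any diffeomorphism. Concretely, in the coordinates of $\mathbf{C}^2/\{\pm1\}$, this is simply the identification $(z_1,z_2)=u\cdot(\text{point of }S^3)$ with $u^4=r^4-a^4$, followed by the inverse of the blow-down $\pi$, which is a diffeomorphism off the exceptional curve.

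The main obstacle is extending $F$ smoothly and diffeomorphically across the bolt $S^2=\{r=a\}$, which must go to the zero section. For this we use the local expansion computed after Definition \ref{EHspace}. With $r^2=a^2+\rho^2$, the metric near $\rho=0$ is $\tfrac12 d\rho^2+\tfrac{\rho^2}{2}\sigma_3^2+\tfrac{a^2}{4}(\sigma_1^2+\sigma_2^2)$. Since $\sigma_3=-\tfrac12(d\varphi+\cos\theta\,d\phi)$, the fibre angle $\psi=\varphi/2$ has period $\pi$ on $S^3/\{\pm1\}$, and the normal part of the metric is $\tfrac12(d\rho^2+\tfrac{\rho^2}{4}d\varphi^2)=\tfrac12(d\rho^2+\rho^2d\psi^2)$ for that effective period. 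This shows that $(\rho,\varphi)$ are polar coordinates on a smooth disc bundle, with no conical singularity exactly because of the $\mathbf{Z}_2$ quotient. We then choose $f$ so that $f(r)\sim c\rho^2$ near $r=a$ and $f(r)=u^2$ for large $r$, say. Then $w=\rho^2 e^{i\varphi}$, a smooth complex coordinate on the disc, is matched with the fibre coordinate $x^2$ (respectively $y^2$) of $\mathcal{O}(-2)$ in the chart $(x^2,t)$ above. The Jacobian is then nonzero at $\rho=0$, so $F$ extends to a diffeomorphism $X_{EH}\to Bl_0(\mathbf{C}^2)/\{\pm1\}$. Gluing the two charts over $\mathbf{CP}^1$ is routine, since the transition function $(s,y)\mapsto(sy,1/s)$ squares to the transition function of $\mathcal{O}(-2)$, matching the Hopf-coordinate transition in $\varphi$.
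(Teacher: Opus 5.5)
Your overall plan is sound and more careful than the paper: you identify $Bl_0(\mathbf{C}^2)/\{\pm1\}$ with the total space of $\mathcal{O}(-2)$ and then check smoothness at the bolt by a local computation. The paper only matches $(a,\infty)\times S^3/\{\pm1\}$ with $(\mathbf{C}^2\setminus\{0\})/\{\pm1\}$ and the bolt with $\mathbf{CP}^1$, and never checks smoothness across $r=a$. However, the step you flag as the main obstacle is carried out incorrectly.

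The near-bolt expansion $\tfrac{\rho^2}{2}\sigma_3^2$ you took from the paper carries a spurious factor $\tfrac14$ relative to Definition \ref{EHspace}. Even taken at face value, your formula $\tfrac12(d\rho^2+\rho^2d\psi^2)$ with $\psi$ of period $\pi$ is a cone of total angle $\pi$, i.e.\ singular. That contradicts your claim that there is no conical singularity. Working directly from Definition \ref{EHspace}, $(1-\tfrac{a^4}{r^4})^{-1}dr^2\to\tfrac12 d\rho^2$ and $\tfrac{r^2}{4}(1-\tfrac{a^4}{r^4})\to\tfrac{\rho^2}{2}$. So the normal metric is $\tfrac12\bigl(d\rho^2+\rho^2(d\varphi+\cos\theta\,d\phi)^2\bigr)$. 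This is smooth precisely because $\varphi$ has period $2\pi$ on $S^3/\{\pm1\}$; on $S^3$ it would be a cone of angle $4\pi$. The smooth transverse coordinate is therefore $\zeta=\rho e^{i(\varphi+\phi)}$ near $\theta=0$ (respectively $\rho e^{i(\varphi-\phi)}$ near $\theta=\pi$). It is linear in $\rho$, not $\rho^2e^{i\varphi}$.

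Consequently your choice $f(r)\sim c\rho^2$ breaks the argument. Transversally to the bolt your $F$ is $\zeta\mapsto c\,\zeta|\zeta|$. This is a homeomorphism, but it is not $C^2$, its differential vanishes along the whole bolt, and its inverse is not differentiable. So $F$ is not a diffeomorphism for the smooth structure in which $\omega_{EH}$ extends smoothly, which is the one the gluing needs.

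The fix is $f\sim c\rho$, and in fact no modification is needed. The function $u^2=\sqrt{r^4-a^4}=\rho\sqrt{2a^2+\rho^2}$ is already linear in $\rho$. So the map you call ``concrete'', namely $\pi^{-1}$ composed with $(z_1,z_2)=u\cdot(\text{point of }S^3)$, is itself the diffeomorphism. To verify it, use the quotient chart $(X,t)=(z_1^2,z_2/z_1)$. There $t=\tan(\theta/2)e^{-i\phi}$ and $X=\zeta\sqrt{2a^2+|\zeta|^2}/(1+|t|^2)$. This is smooth in $(\zeta,t)$, with $\partial_\zeta X=\sqrt2\,a/(1+|t|^2)\neq0$ and $\partial_{\bar\zeta}X=0$ at $\zeta=0$. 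The chart $(z_2^2,z_1/z_2)$ near $\theta=\pi$ is analogous.

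Relatedly, the phase $e^{i\varphi/2}$ in your formula for $F$ is not well defined on $S^3/\{\pm1\}$, since $\varphi\mapsto\varphi+2\pi$ flips its sign. It is the phase of $z_1$, an $\mathcal{O}(-1)$ quantity. The $\mathcal{O}(-2)$ fibre coordinate $z_1^2$ has phase $e^{i(\varphi+\phi)}$.
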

To see this, we will split this problem into two cases, namely showing $Bl_0(\mathbf{C}^2\backslash\{\pi^{-1}(0)\})=\mathbf{C}^2\backslash\{0\}\cong \{r<a\}\times S^3$ and then the other case is immediate, as we have $Bl_0(\mathbf{C}^2)/(\mathbf{C}^2\backslash\{0\})=\mathbf{CP}^1\cong S^2$, these are the remaining unidentified parts from the first case. Consider the following to show the first case:
\begin{align}
    \tilde{P}&:\{r<a\}\times S^3\rightarrow \mathbf{C}^2/\{0\}\text{ ,  }\tilde{P}(r,u)=ru\\
    \tilde{P}^{-1}&:\mathbf{C}^2/\{0\}\rightarrow \{r<a\}\times S^3\text{ ,  }\tilde{P}^{-1}(z)=(|z|, z/|z|)
\end{align}
Both of these maps are diffeomorphisms so we have $Bl_0(\mathbf{C}^2/\{0\})\cong \{r<a\}\times S^3$. We can then define our diffeomorphism, $P:X_{EH} \rightarrow Bl_0(\mathbf{C}^2)/\{\pm1\}$, where this map identifies each case as outlined above, including the quotient by $\mathbf{Z}_2$ in the first case. \qed

\section{An Approximate Calabi-Yau Metric}

Now that we have an understanding of the Kummer surface, we want to produce an approximate Calabi-Yau metric. To do this we use that the flat metric on $\mathbf{T^4}$ and the Eguchi-Hanson metric are Calabi-Yau, so by introducing a gluing region as we vary smoothly between these two metrics. The resulting glued metric will be almost Calabi-Yau. We then also provide a nowhere vanishing holomorphic 2-form over the whole Kummer surface, this sets us up to produce a Calabi-Yau equation, \ref{CYequation}, which we will solve in the final section, \ref{4}.

\begin{prop}
    The canonical K\"ahler metric on $\mathbf{T}^4$ is the flat metric, with the flat K\"ahler form $\omega_{\mathbf{T}^4}=\frac{i}{2}(dz_1\wedge d\bar{z}_1+dz_2\wedge d \bar{z}_2)$.
\end{prop}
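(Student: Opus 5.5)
The plan is to start from the standard flat structure on $\mathbf{C}^2$ and check that it descends to $\mathbf{T}^4=\mathbf{C}^2/\Lambda$. On $\mathbf{C}^2$ with coordinates $z_j=x_j+iy_j$, take the Euclidean metric $g_0=\sum_j(dx_j^2+dy_j^2)$ and the standard complex structure $I(dx_j)=dy_j$, $I(dy_j)=-dx_j$, as in the proof of \ref{KahlerPot}. Since $\Lambda$ acts by translations, $g_0$, $I$ and all constant-coefficient forms are $\Lambda$-invariant. They therefore descend to $\mathbf{T}^4$, and the holomorphic coordinates $z_1,z_2$ give local holomorphic charts on the quotient.

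Next I compute the associated K\"ahler form $\omega(u,v)=g_0(Iu,v)$, with the same convention used for $\omega_I$ on the Eguchi-Hanson space. This gives $\omega=\sum_j dx_j\wedge dy_j$. Substituting $dz_j\wedge d\bar z_j=(dx_j+idy_j)\wedge(dx_j-idy_j)=-2i\,dx_j\wedge dy_j$ shows $dx_j\wedge dy_j=\frac{i}{2}dz_j\wedge d\bar z_j$, and hence $\omega=\omega_{\mathbf{T}^4}$ as stated. The form is closed because its coefficients are constant, and it is positive because $g_0$ is positive definite. So $(\mathbf{T}^4,g_0,I)$ is K\"ahler. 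Equivalently, $\omega_{\mathbf{T}^4}=i\partial\bar\partial\big(\tfrac14(|z_1|^2+|z_2|^2)\big)$ locally; with the paper's convention $\omega=i\partial\bar\partial\varphi$ one checks $i\partial\bar\partial|z|^2=i\,dz\wedge d\bar z$, so the potential is $\frac12\cdot\frac12|z|^2$. This potential is only local, since it is not $\Lambda$-periodic, but that is all a K\"ahler potential needs here.

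Finally, I justify calling this metric canonical in the Calabi-Yau sense. Its Levi-Civita connection in the coordinates $x_j,y_j$ is trivial, so the curvature vanishes and $\mathrm{Ric}=0$. Moreover, $\chi=dz_1\wedge dz_2$ is a nowhere vanishing parallel holomorphic $(2,0)$-form, so Proposition \ref{Ballmann} gives holonomy in $SU(2)$; in fact the holonomy is trivial. In addition, $\omega_{\mathbf{T}^4}^2=\frac{1}{2}\cdot\frac{1}{4}\cdot 2\, dz_1\wedge d\bar z_1\wedge dz_2\wedge d\bar z_2$ is a constant multiple of $\chi\wedge\bar\chi$, which is exactly the equation in Theorem \ref{CYequation}. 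The metric is also invariant under $\sigma$, which is the property needed later when passing to $\mathbf{T}^4/\sigma$.

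The only real obstacle is interpretive: deciding what canonical means. I would take it to mean the translation-invariant, equivalently flat, K\"ahler metric induced from $\mathbf{C}^2$, and not a uniqueness claim, since any constant Hermitian form would also give a flat K\"ahler metric. Beyond that, the only care needed is with the factors of $\frac{i}{2}$ and the sign conventions.
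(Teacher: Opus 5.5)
Your proof is correct and follows the paper's route: the paper simply observes that the flat K\"ahler structure on $\mathbf{C}^2$ is translation invariant and hence descends to $\mathbf{C}^2/\Lambda$, and your computation $g_0(I\cdot,\cdot)=\sum_j dx_j\wedge dy_j=\frac{i}{2}\sum_j dz_j\wedge d\bar z_j$ makes the identification of the form explicit. Two side remarks contain arithmetic slips that do not affect the main argument: since $i\partial\bar\partial|z|^2=i\,dz\wedge d\bar z$, the local potential is $\frac12(|z_1|^2+|z_2|^2)$ (consistent with the paper's $\varphi_{flat}=\frac{r^2}{2}$), not $\frac14(|z_1|^2+|z_2|^2)$; and $\omega_{\mathbf{T}^4}^2=-\frac12\,dz_1\wedge d\bar z_1\wedge dz_2\wedge d\bar z_2=\frac12\,\chi\wedge\bar\chi$, so $\lambda=\frac12>0$, whereas your coefficient has the wrong size and sign.
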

\textbf{Proof:} This follows from noting that the flat K\"ahler metric on $\mathbf{C}^2$ is translation invariant and so under quotient of $\Lambda$, to give $\mathbf{T}^4=\mathbf{C}^2/\Lambda$, it is preserved. Given the metric is preserved, the K\"ahler form is also clearly preserved. \qed

\begin{prop}
    The flat K\"ahler form induces a K\"ahler form on $Z\backslash\bigcup_{i=1}^{16}V_i$.
\end{prop}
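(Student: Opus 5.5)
The plan is to show that $\omega_{\mathbf{T}^4}$ descends through the two identifications that produce $Z\backslash\bigcup_{i=1}^{16}V_i$. These are the quotient by the lattice $\Lambda$, already handled in the previous proposition, and the quotient by the involution $\sigma$. The key observation is that, by the definition of $Z$, the region $Z\backslash\bigcup_{i=1}^{16}V_i$ is canonically identified with $(\mathbf{T}^4\backslash\bigcup_{i}B_{p_i}(\zeta))/\langle\sigma\rangle$. The blow-up regions $V_i$ are glued only over the balls $\tilde V_i$, so away from them $Z$ coincides with the orbifold $\mathbf{T}^4/\sigma$ with the singular points removed. It therefore suffices to produce a K\"ahler form on this open subset of $\mathbf{T}^4/\sigma$.

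First, I will check that $\sigma$ preserves $\omega_{\mathbf{T}^4}$. Since $\sigma(z_1,z_2)=(-z_1,-z_2)$ is induced from a linear map on $\mathbf{C}^2$ that preserves $\Lambda$, we have $\sigma^*dz_j=-dz_j$ and $\sigma^*d\bar z_j=-d\bar z_j$. Hence
\begin{align}
\sigma^*\omega_{\mathbf{T}^4}=\frac{i}{2}\left((-dz_1)\wedge(-d\bar z_1)+(-dz_2)\wedge(-d\bar z_2)\right)=\omega_{\mathbf{T}^4}.
\end{align}
The same computation shows that $\sigma$ is holomorphic, since it is complex linear, and that it is an isometry of the flat metric.

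Second, I will note that on $\mathbf{T}^4\backslash\{p_1,\dots,p_{16}\}$ the group $\langle\sigma\rangle\cong\mathbf{Z}_2$ acts freely, because the $p_i$ are exactly the fixed points of $\sigma$ (previous proposition). The action is also properly discontinuous, since the group is finite. So the quotient map $q$ is a holomorphic double covering onto a complex manifold. Locally $q$ restricts to a biholomorphism on small open sets $U$ with $U\cap\sigma(U)=\emptyset$, and on such an $U$ I define $\omega$ by pushing forward $\omega_{\mathbf{T}^4}|_U$. Well-definedness on overlaps, where the other local lift is $\sigma(U)$, follows exactly from $\sigma^*\omega_{\mathbf{T}^4}=\omega_{\mathbf{T}^4}$.

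Finally, closedness, positivity and the $(1,1)$ type are local properties preserved by biholomorphisms, so $\omega$ is K\"ahler because $\omega_{\mathbf{T}^4}$ is. Restricting to the complement of $\bigcup_i\tilde V_i$ gives the form on $Z\backslash\bigcup_{i=1}^{16}V_i$. The main point requiring care is the identification of this region of $Z$ with a subset of $(\mathbf{T}^4\backslash\{p_i\})/\sigma$, including its complex structure, under the equivalence $\cup_\pi$. I would make this explicit using that $\pi$ is a biholomorphism away from $\pi^{-1}(0)$.
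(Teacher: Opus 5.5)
Your proposal is correct and follows the same route as the paper. The paper's whole argument is the observation that $d(-z_j)\wedge d(-\bar z_j)=dz_j\wedge d\bar z_j$, so $\sigma^*\omega_{\mathbf{T}^4}=\omega_{\mathbf{T}^4}$ and the form descends away from the fixed points; your extra steps (freeness of the $\mathbf{Z}_2$-action off $\{p_i\}$, local descent through the double cover, and the identification of $Z\backslash\bigcup_i V_i$ with an open subset of $(\mathbf{T}^4\backslash\{p_i\})/\langle\sigma\rangle$ using that $\pi$ is a biholomorphism away from $\pi^{-1}(0)$) just make explicit what the paper leaves implicit.
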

\textbf{Proof:} To show this it will be sufficient to show that the K\"ahler form and metric on $\mathbf{T}^4$ and $\mathbf{T}^4/\sigma$ are the same away from the singular points. This result can be seen by the following observation: $d(\sigma(z_i))\wedge d(\sigma(\bar{z}_i))=d(-z_i)\wedge d(-\bar{z}_i)=dz_i\wedge d\bar{z}_i, \forall i\in\{1,2\}$ and so we see that both the K\"ahler form and metric are preserved under $\sigma$. \qed

From the general idea given at the start of this section, the reader can probably guess that defining a metric over the Kummer surface will require that we define a metric which 'connects' $\omega_{T^4}$ and $\omega_{EH}$. We will do this in the following way. Begin by defining some smooth cut off function, $\beta:\mathbf{R}\rightarrow [0,1]$, recalling that $\zeta$ is the diameter of the region $V_i$ around our singularity $p_i$, \ref{zeta}:  
\begin{align}
    \beta(t)=\begin{cases}
       1 & \forall t\leq\frac{\zeta}{4}\\
       \text{Smooth} & \frac{\zeta}{4}<t<\frac{\zeta}{2}\\
       0 & \forall t\geq\frac{\zeta}{2}
    \end{cases}
\end{align}
We then define the following using our K\"ahler potential, $\varphi_a$, \ref{KahlerPot}, and the flat K\"ahler potential in our coordinates $\varphi_{flat}=\frac{r^2}{2}$, $G=\varphi_a-\varphi_{flat}=\frac{a^2}{4}\log(\frac{r^2-a^2}{r^2+a^2})$. Now we define the following:
\begin{align}
\omega_{I,a}=\omega_{T^4}+i\partial\bar\partial((\beta\circ r)G)
\end{align}
Note that when $\beta(r)=0$ $\omega_{I,a}=\omega_{flat}=\omega_{T^4}$ and when $\beta(r)=1$ we have $\omega_{I,a}=\omega_{EH}$, so we can see that the metric varies smoothly from $\omega_{EH}$ to $\omega_{T^4}$ within each $V_i$. So, defining the metric around a singularity, $p_i$, we have:
\begin{align}
    \omega_0^i=\begin{cases}
        \omega_{T^4} & Z/V_i \\
        \omega_{I,a} & V_i
    \end{cases}
\end{align}
So to define a metric over all the $p_i$'s we need to define $r_i$ as the distance from singularity $p_i$ so we get the following metric defined over the whole of the Kummer surface,  $Z$: 
\begin{align} \label{omega0}
\omega_0=\omega_{T^4}+i\sum_{i=1}^{16}\partial\bar\partial((\beta\circ r_i)G)
\end{align}
This metric takes the form of the Eguchi-Hanson metric 'close' to each $p_i$ and the flat metric away from the $p_i$'s and varies smoothly between them within each of the $V_i$'s.

\begin{prop} \label{Omega}
    $\Omega=\omega_J+i\omega_K$ is a holomorphic $(2,0)$-form more particularly $\Omega=P^*(dz_1\wedge dz_2)$, where $P:X_{EH}\rightarrow \mathbf{C}^2/\{\pm1\}$. 
\end{prop}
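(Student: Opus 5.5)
We prove that $\Omega=\omega_J+i\omega_K$ is a closed $(2,0)$-form with respect to $I$, and then identify it with $P^*(dz_1\wedge dz_2)$ by an explicit comparison in the coordinates of \ref{EHspace}.

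\textbf{Step 1: $\Omega$ is of type $(2,0)$ for $I$.} A complex $2$-form $\alpha$ is of type $(2,0)$ exactly when $\alpha(IX,Y)=i\alpha(X,Y)$ for all $X,Y$. Write $\omega_J=e_0\wedge e_1+e_2\wedge e_3$ and $\omega_K=e_0\wedge e_2+e_3\wedge e_1$, and use the dual action of $I$ from \ref{Hyperk}. Expanding in the basis \ref{basis} gives the factorisation
\begin{align}
\Omega=(e_0+ie_3)\wedge(e_1+ie_2).
\end{align}
Up to the sign convention for how $I$ acts on covectors, both $e_0+ie_3$ and $e_1+ie_2$ are $(1,0)$-forms for $I$. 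This can be checked directly from $I(e_0)=e_3$ and $I(e_1)=e_2$. Hence $\Omega$ is of type $(2,0)$.

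\textbf{Step 2: $\Omega$ is holomorphic.} By the proposition showing $\omega_J$ and $\omega_K$ are K\"ahler forms (they are closed), $d\Omega=0$. For a $(2,0)$-form on a complex surface we have $\partial\Omega=0$ automatically, since there are no $(3,0)$-forms. So $d\Omega=0$ forces $\bar\partial\Omega=0$, which means $\Omega$ is holomorphic. The form is nowhere vanishing because $\Omega\wedge\bar\Omega$ is a nonzero multiple of $e_0\wedge e_1\wedge e_2\wedge e_3$.

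\textbf{Step 3: identification with $P^*(dz_1\wedge dz_2)$.} Parametrise $\mathbf{C}^2\setminus\{0\}$ by $z=\tilde r u$ with $u\in S^3$, written in the Euler angles $(\theta,\phi,\varphi)$ underlying the $\sigma_i$:
\begin{align}
z_1=\tilde r\cos(\theta/2)e^{i(\varphi+\phi)/2},\qquad z_2=\tilde r\sin(\theta/2)e^{i(\varphi-\phi)/2}.
\end{align}
In these coordinates a routine calculation gives
\begin{align}
dz_1\wedge dz_2=\tilde r\, e^{i\varphi}\,(d\tilde r+ i\tilde r\sigma_3')\wedge \tilde r(\sigma_1'+i\sigma_2'),
\end{align}
up to conventions on signs and labelling of the $\sigma$'s. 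The radial profile is then chosen so that $P$ matches $\Omega$. Take $\tilde r^2=u^2=\sqrt{r^4-a^4}$, equivalently the substitution $r^4=u^4+a^4$ already used in \ref{EHspace}. Then
\begin{align}
\tilde r\,d\tilde r=\frac{r^3}{\sqrt{r^4-a^4}}\,dr=r\,e_0,\qquad \tilde r^2\sigma_3=r\,e_3\cdot\text{(matching factor)},
\end{align}
and we check that $\tilde r(d\tilde r+i\tilde r\sigma_3)\wedge\tilde r(\sigma_1+i\sigma_2)$ equals $(e_0+ie_3)\wedge(e_1+ie_2)$ up to a constant. The phase $e^{i\varphi}$ is removed by rotating the frame $\sigma_1+i\sigma_2$, which is just a choice of left- versus right-invariant forms.

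The $\pm1$ quotient is compatible with this identification: the map $z\mapsto -z$ shifts $\varphi$ by $2\pi$, and $dz_1\wedge dz_2$ is invariant under it. The resulting map $P$ extends over the zero section $S^2$ as the blow-down, because $\Omega$ is smooth at $r=a$ in the $u$-coordinates.

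\textbf{Main obstacle.} The delicate part is Step 3. The factors of $2$ in the normalisation of the $\sigma_i$, the sign conventions, and the left- versus right-invariant frames must all be chosen so that the identity holds exactly. The paper's own coordinate diffeomorphism $P$, built from $\tilde P(r,u)=ru$, is too crude for this, since it uses $r$ directly. I would therefore first compute $dz_1\wedge dz_2$ in Euler angles and only then fix the radial reparametrisation $\tilde r^4=r^4-a^4$, allowing an overall constant that can be absorbed into $\Omega$.
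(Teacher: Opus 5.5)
Your Step 3 does not close as written. The displayed expansion of $dz_1\wedge dz_2$ has a spurious factor of $\tilde r$. The form $dz_1\wedge dz_2$ is homogeneous of degree $2$ under $z\mapsto\lambda z$, while $\tilde r\,(d\tilde r+i\tilde r\sigma_3')\wedge\tilde r(\sigma_1'+i\sigma_2')$ has degree $3$. Use $\tilde r\,d\tilde r=re_0$, $\tilde r^2\sigma_3=re_3$ and $\sigma_1+i\sigma_2=r^{-1}(e_1+ie_2)$. Then your check gives $\tilde r(d\tilde r+i\tilde r\sigma_3)\wedge\tilde r(\sigma_1+i\sigma_2)=\tilde r\,(e_0+ie_3)\wedge(e_1+ie_2)$. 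This differs from $\Omega$ by the non-constant factor $\tilde r$, not by a constant. Absorbing constants into $\Omega=\omega_J+i\omega_K$ is not an option anyway, since $\Omega$ is fixed by definition. The correct expansion, obtained from $|z|^2\,dz_1\wedge dz_2=(\bar z_1dz_1+\bar z_2dz_2)\wedge(z_1dz_2-z_2dz_1)$, is
\[
dz_1\wedge dz_2=e^{i\varphi}\,\tilde r\,(d\tilde r+i\tilde r\sigma_3')\wedge(\sigma_1'+i\sigma_2'),\qquad \sigma_3'=\tfrac12(d\varphi+\cos\theta\,d\phi),\quad \sigma_1'+i\sigma_2'=\tfrac12(d\theta-i\sin\theta\,d\phi).
\]
On the $\Omega$ side, with $\tilde r=u$ one has $\tilde r(d\tilde r+i\tilde r\sigma_3)\wedge(\sigma_1+i\sigma_2)=(e_0+ie_3)\wedge(e_1+ie_2)=\Omega$ exactly, so no constant is needed.

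The phase $e^{i\varphi}$ also needs a precise treatment, not an appeal to left- versus right-invariant frames. The paper's $\sigma_1+i\sigma_2=-\tfrac12e^{-i\varphi}(d\theta+i\sin\theta\,d\phi)$ already carries it, and one finds $e^{i\varphi}(\sigma_1'+i\sigma_2')=-(\sigma_1-i\sigma_2)$ and $\sigma_3'=-\sigma_3$. So with the paper's definitions taken literally, $P^*(dz_1\wedge dz_2)=-\overline{\Omega}$. The statement then holds after composing $P$ with the $\pm1$-equivariant antiholomorphic map $(z_1,z_2)\mapsto(\bar z_2,\bar z_1)$. Your caveat about conventions is therefore genuinely needed, but state the fix as this conjugation rather than a change of frame. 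The paper's proof lands on $dz_1\wedge dz_2$ exactly only because its formulas for $\sigma_1$ and $\sigma_3$ in terms of the $z_j$ have the opposite sign to its own definitions. Also, $P$ extends over the bolt as the blow-down because $|P|=u\to0$ there, not because $\Omega$ is smooth.

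With that repaired, your Step 3 is the paper's route. The paper's proof already parametrises with radius $u$, $u^4=r^4-a^4$, so your objection to $\tilde P(r,u)=ru$ applies to the earlier diffeomorphism proposition, not to this proof. Your Steps 1 and 2 are correct and genuinely different. The paper infers $\bar\partial\Omega=0$ from the claim that $P$ is a composition of holomorphic maps, which is essentially what has to be proved. Your factorisation $\Omega=(e_0+ie_3)\wedge(e_1+ie_2)$, together with $d\omega_J=d\omega_K=0$, gives type $(2,0)$, $\bar\partial\Omega=0$ and non-vanishing intrinsically. Combined with the corrected identity, it also gives holomorphy of the conjugated $P$: $P^*dz_j\wedge\Omega=P^*(dz_j\wedge dz_1\wedge dz_2)=0$ forces each $P^*dz_j$ to have type $(1,0)$.
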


\textbf{Note:} Below I slightly abuse notation by not including the map $P$, you can consider $P$ or $P^{-1}$ to be applied to whole expressions where appropriate. \\
\\

\textbf{Proof:} First, we will write $\omega_J$ and $\omega_K$ in terms of $u$ so we are considering them over the space $\{u>0\}\times (S^3/\{\pm 1\})$, recall $u^4=r^4-a^4\implies \frac{du}{dr}=\frac{r^3}{u^3}$. 
\begin{align}
    \begin{split}
    \omega_J&=\frac{r}{\sqrt{1-\frac{a^4}{r^4}}} dr\wedge \sigma_1+r^2\sqrt{1-\frac{a^4}{r^4}}\sigma_2\wedge\sigma_3=\frac{r^3}{\sqrt{r^4-a^4}} dr\wedge \sigma_1+\sqrt{r^4-a^4}\sigma_2\wedge\sigma_3\\
    &=udu\wedge\sigma_1+u^2\sigma_2\wedge\sigma_3 \text{ , and similarly we get: }\omega_K=...=udu\wedge\sigma_2+u^2\sigma_3\wedge\sigma_1\\
    \Omega&=\omega_J+i\omega_K=udu\wedge\sigma_1+u^2\sigma_2\wedge\sigma_3+i(udu\wedge\sigma_2+u^2\sigma_3\wedge\sigma_1)\\
    &=udu\wedge(\sigma_1+i\sigma_2)+u^2\sigma_3\wedge(i\sigma_1-\sigma_2)
    \end{split}
\end{align}
We now need to find expressions for $\sigma_i$'s and $du$ in terms of $dz_1, dz_2$ and their conjugates. The expression for $du$ follows directly from the following identity $u^2=|z_1|^2+|z_2|^2$. For the $\sigma_i$'s we write $z_1=u\cos(\frac{\theta}{2})e^{\frac{i}{2}(\varphi+\phi)}$ and $z_2=u\sin(\frac{\theta}{2})e^{\frac{i}{2}(\varphi-\phi)}$ and use this to write out $dz_1, dz_2, d\bar{z}_1$ and $d\bar{z}_2$ and then by comparing to out $\sigma_i$'s we have the desired expressions. As this is quite routine but messy it has been omitted. The results are the following 1-form relationships:
\begin{align}
    \begin{split}
    du&=\frac{1}{2u}(\bar{z}_1dz_1+z_1d\bar{z}_1+\bar{z}_2dz_2+z_2d\bar{z}_2)\\
    \sigma_1&=-\frac{1}{2u^2}(z_2dz_1-z_1dz_2+\bar{z}_2d\bar{z}_1-\bar{z}_1d\bar{z}_2)\\
    \sigma_2&=\frac{i}{2u^2}(z_2dz_1-z_1dz_2-\bar{z}_2d\bar{z}_1+\bar{z}_1d\bar{z}_2)\\
    \sigma_3&=-\frac{i}{2u^2}(\bar{z}_1dz_1-z_1d\bar{z}_1+\bar{z}_2dz_2-z_2d\bar{z}_2)
    \end{split}
\end{align}
We now want to consider $\sigma_1+i\sigma_2$:
\begin{align}
    \begin{split}
    \sigma_1+i\sigma_2&=-\frac{1}{2u^2}(z_2dz_1-z_1dz_2-\bar{z}_2d\bar{z}_1+\bar{z}_1d\bar{z}_2+z_2dz_1-z_1dz_2+\bar{z}_2d\bar{z}_1-\bar{z}_1d\bar{z}_2)\\
    &=\frac{1}{u^2}(z_1dz_2-z_2dz_1)
    \end{split}
\end{align}
We now also have $i\sigma_1-\sigma_2=\frac{i}{u^2}(z_1dz_2-z_2dz_1)$ by multiplying $\sigma_1+i\sigma_2$ by $i$. We can now consider $\Omega=\omega_J+i\omega_K$, so we have that:
\begin{align}
    \begin{split}
    \Omega&=\frac{1}{u}du\wedge(z_1dz_2-z_2dz_1)+\sigma_3\wedge(z_1dz_2-z_2dz_1)=(\frac{1}{u}du+i\sigma_3)\wedge(z_1dz_2-z_2dz_1)\\
    &=\frac{1}{2u^2}((\bar{z}_1dz_1+z_1d\bar{z}_1+\bar{z}_2dz_2+z_2d\bar{z}_2)+(\bar{z}_1dz_1-z_1d\bar{z}_1+\bar{z}_2dz_2-z_2d\bar{z}_2))\wedge(z_1dz_2-z_2dz_1)\\
    &=\frac{1}{u^2}(\bar{z}_1dz_1+\bar{z}_2dz_2)\wedge(z_1dz_2-z_2dz_1)=\frac{1}{u^2}(|z_1|^2dz_1\wedge dz_2-|z_2|^2dz_2\wedge dz_1)\\
    &=\frac{|z_1|^2+|z_2|^2}{u^2}dz_1\wedge dz_2=dz_1\wedge dz_2
    \end{split}
\end{align} 
All we have left to show is that $\Omega$ is a holomorphic $(2,0)$-form, to do this we need to re-introduce our map $P:X_{EH}\rightarrow Bl_0( \mathbf{C}^2)/\{\pm 1\}$. First, observe that this clearly takes our $(2,0)$-form $dz_1\wedge dz_2$ to a $(2,0)$-form, so all that remains to be shown is that $\bar\partial P=P\bar\partial$, i.e. $P$ is a holomorphic map. Consider the following:
\begin{align}
    P(u, \theta, \varphi, \phi)=(u\cos(\frac{\theta}{2})e^{\frac{i}{2}(\varphi+\phi)}\text{ , }u\sin(\frac{\theta}{2})e^{\frac{i}{2}(\varphi-\phi)})=(z_1, z_2)
\end{align}
This is a composition of holomorphic maps for $u>0$ so $P$ is holomorphic so we have that $\Omega$ is holomorphic by  $\bar\partial\Omega=\bar\partial(P(dz_1\wedge dz_2))=P(\bar\partial(dz_1\wedge dz_2))=0$ and we have that $\Omega$ is a holomorphic $(2,0)$-form.
\qed\\
\\

We can now define a holomorphic volume form over the whole of $Z$. To begin with, we will define it over $Z\setminus\{\pi^{-1}(p_i)\}_i$ and then we can apply Hartogs theorem, \cite[p.6]{Cman}, to extend it to the whole of $Z$.\\
\\
We start by recalling the holomorphic volume form on $\textbf{T}^4/\{\pm 1\}$, away from $\{p_i\}$, is just the standard one for $\textbf{T}^4$, so $dz_1\wedge dz_2$. From our proposition above, \ref{Omega}, we have $\Omega=dz_1\wedge dz_2$ on the Eguchi-Hanson space, so we have that these forms exactly agree. Considering, the punctured ball, $\mathbf{C}^2\setminus\{p_i\}$, is biholomorphic to the Eguchi-Hanson space with the exceptional 2-sphere removed, we can make this exact identification. This means $\Omega=dz_1\wedge dz_2$ is our holomorphic volume form on $Z\setminus\{\pi^{-1}(p_i)\}_i$. By now applying Hartogs theorem we can extend this over $\{\pi^{-1}(p_i)\}_i$ so we have the final holomorphic volume form. \\
\\


\section{Estimates for $\mathbf{e_a}$, the Laplace Operator and Q} 

Using our Calabi-Yau equation, \ref{CYequation}, and perturbing our approximate metric by $\bar\partial\partial\phi$, we can now reduce finding a Calabi-Yau metric to finding a function, $\phi$, satisfying a perturbed Calabi-Yau equation, \ref{CYequation0}. This equation breaks up into three parts: an unperturbed term, a linear term, and a quadratic term. We then present an estimate for each of these parts: \ref{ea}, \ref{laplace} and \ref{quadratic}.\\
\\

Taking $\mathcal{D}=2i\bar\partial\partial$, we can consider our approximate K\"ahler form, $\omega_0$, \ref{omega0}, and a function $\phi$. We then have $\omega_\phi=\omega_0+\mathcal{D} \phi$, is K\"ahler if $\omega_0+\mathcal{D} \phi>0$. So, to find our Calabi-Yau metric we need to solve the following Calabi-Yau equation, \ref{CYequation}, for $\phi$:
\begin{align}\label{CYequation0}
    (\omega_0+\mathcal{D} \phi)^2=\lambda\chi\wedge\bar\chi:\omega_0+\mathcal{D} \phi>0
\end{align}
For $\lambda>0$. We can then expand and rearrange this equation, splitting it into different parts to make the analysis easier:
\begin{align}
    \begin{split}
    &\omega_0^2-\lambda\chi\wedge\bar\chi+\omega_0\wedge\mathcal{D} f+\mathcal{D}\phi\wedge\omega_0+(\mathcal{D} \phi)^2=0\\
    &\tilde{e}_a=\omega_0^2-\lambda\chi\wedge\bar\chi\text{ , }  L(\phi)=2\omega_0\wedge\mathcal{D} \phi\text{ , }\tilde{Q}(\phi)=(\mathcal{D} \phi)^2\\
    &\tilde{e}_a+L(\phi)+\tilde{Q}(\phi)=0
    \end{split}
\end{align}
From here it is then convenient to divide by a factor of $\omega_0^2$ so we are considering functions and not $(2,2)$-forms. Doing this, we observe that $L/\omega_0^2=\Delta_{\omega_0}$, which is the familiar Laplacian, from now we will write this as $\Delta$. This gives us:
\begin{align}
    \begin{split}
    &e_a+\Delta(\phi)+Q(\phi)=0\\
    &e_a=(\omega_0^2-\lambda \chi\wedge\chi)/\omega_0^2\text{ , } \Delta \phi=2\omega_0\wedge \mathcal{D} \phi/\omega_0^2\text{ , } Q(\phi)=(\mathcal{D} \phi)^2/\omega_0^2
    \end{split}
\end{align}

We will now compute suitable estimates for $e_a$, $\Delta^{-1}$ and $Q$. To do this consider the following 3 metrics which will prove essential in the following analysis, $M$ is a compact manifold which is covered by balls, $B_r(p)$, with $r=\mathcal{O}(a)$:
\begin{align}
    \begin{split}
\|.\|_X&=a^{-4+\epsilon}\|.\|_{L^2_2(M)}+a^\alpha\|.\|_{C^{2,\alpha}(M)}\\
    \|.\|_Y&=a^{-4+\epsilon}\|.\|_{L^2(M)}+\|.\|_{C^{0,\alpha}(M)}\\
    \|.\|_{C^{k,\alpha}(M)}&=\max_{{p\in M}}\|.\|_{C^{k, \alpha}(B_r(p))}
    \end{split}
\end{align}
where $a$ is our familiar variable from the Eguchi-Hanson metric \ref{EHspace}, $\alpha\in(0,\frac{1}{3})$\label{alpha} and $\epsilon=2-\frac{n}{p}=\frac{4}{3}>0$, for $n=4$ and $p=6$, the choice of $p$ here is not unique and where the choice becomes important will be mentioned below. We then define the spaces $X$ and $Y$ as the sets of smooth, mean zero functions completed with respect to the metrics $\|.\|_X$ and $\|.\|_Y$ respectively. The norms, $\|.\|_X$ and $\|.\|_Y$, and the following analysis are similar to the ones used in \cite{Platt}. 

\begin{prop} \label{ea}
    We have the following inequality, for $c\in \mathbf{R}^+$ and $a$ the familiar constant from the Eguchi-Hanson space:
    \begin{align}
        \|e_a\|_Y\leq c a^\epsilon
    \end{align}
\end{prop}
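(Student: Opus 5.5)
The plan is to show that $e_a$ is $\mathcal{O}(a^4)$ in every norm that appears, and that it is concentrated on the gluing annuli, up to a constant of the same size. Since $\epsilon=\frac{4}{3}<4$ and $a$ is small, a bound of order $a^4$ is more than enough for the $C^{0,\alpha}$ part of $\|.\|_Y$. It is also exactly what is needed for the $L^2$ part: $a^{-4+\epsilon}\cdot a^4=a^\epsilon$.

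\textbf{Step 1: where $e_a$ is a constant.} First I will identify the regions on which $e_a$ is constant. Away from $\bigcup V_i$ and where $\beta\circ r_i=0$, we have $\omega_0=\omega_{\mathbf{T}^4}$. There a direct computation gives $\omega_{\mathbf{T}^4}^2=\lambda_0\,dz_1\wedge dz_2\wedge d\bar z_1\wedge d\bar z_2$ for an explicit constant $\lambda_0$ (e.g. $\lambda_0=\frac14$ with the normalisation $\frac{i}{2}dz\wedge d\bar z$). Where $\beta\circ r_i=1$, we have $\omega_0=\omega_{EH}=\omega_I$. On a hyper-K\"ahler 4-manifold, $\omega_I^2=\omega_J^2=\omega_K^2$ and $\omega_J\wedge\omega_K=0$, so $\Omega\wedge\bar\Omega=(\omega_J+i\omega_K)\wedge(\omega_J-i\omega_K)=2\omega_I^2$. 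By Proposition \ref{Omega}, $\Omega=dz_1\wedge dz_2=\chi$, and this gives the same constant $\lambda_0$ once the normalisations are checked. Hence $\omega_0^2-\lambda_0\chi\wedge\bar\chi$ vanishes identically outside the annuli $A_i=\{\zeta/4<r_i<\zeta/2\}$.

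\textbf{Step 2: the annuli.} On $A_i$ I will expand $G=\frac{a^2}{4}\log\frac{r^2-a^2}{r^2+a^2}=-\frac{a^4}{2r^2}+\mathcal{O}(a^8)$. Since $r$ is bounded below by $\zeta/4$ there, every $C^k$ norm of $G$ on $A_i$ is $\mathcal{O}(a^4)$. So is every $C^k$ norm of $(\beta\circ r_i)G$, because $\beta$ is fixed and independent of $a$. Therefore $\omega_0=\omega_{\mathbf{T}^4}+\eta$ with $\|\eta\|_{C^k(A_i)}=\mathcal{O}(a^4)$. It follows that $\omega_0^2-\lambda_0\chi\wedge\bar\chi=2\omega_{\mathbf{T}^4}\wedge\eta+\eta^2$ and $\omega_0^2=\omega_{\mathbf{T}^4}^2(1+\mathcal{O}(a^4))$. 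Consequently $e_a^{(0)}:=1-\lambda_0\chi\wedge\bar\chi/\omega_0^2$ satisfies $\|e_a^{(0)}\|_{C^{1}(A_i)}\le Ca^4$, which controls the $C^{0,\alpha}$ norm on each small ball $B_r(p)$.

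\textbf{Step 3: the choice of $\lambda$ and the mean-zero condition.} The space $Y$ consists of mean-zero functions, so $\lambda$ must be chosen by $\int_Z\omega_0^2=\lambda\int_Z\chi\wedge\bar\chi$, i.e. so that $\int e_a\,\omega_0^2=0$. Then
\begin{align}
\lambda-\lambda_0=\frac{\int_Z(\omega_0^2-\lambda_0\chi\wedge\bar\chi)}{\int_Z\chi\wedge\bar\chi}.
\end{align}
The numerator is supported on $\bigcup A_i$ by Step 1 and is $\mathcal{O}(a^4)$ there by Step 2. The denominator is bounded below, since it is at least the torus volume minus the small balls. Hence $|\lambda-\lambda_0|\le Ca^4$. (Alternatively, the integral is cohomological and can be computed exactly, but the crude bound suffices.)

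Writing $e_a=e_a^{(0)}-(\lambda-\lambda_0)\chi\wedge\bar\chi/\omega_0^2$, the second term has $C^{0,\alpha}$ norm $\mathcal{O}(a^4)$. This uses that $\chi\wedge\bar\chi/\omega_0^2$ is a constant on the flat and Eguchi-Hanson regions and is uniformly controlled on the annuli. Since the volume of $Z$ is bounded, $\|e_a\|_{L^2}\le Ca^4$ and $\|e_a\|_{C^{0,\alpha}}\le Ca^4\le Ca^\epsilon$. Combining the two parts of $\|.\|_Y$ then gives $\|e_a\|_Y\le ca^\epsilon$.

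The main obstacle I expect is Step 1: checking carefully that the normalisations of $\omega_{\mathbf{T}^4}$, $\omega_I$ and $\Omega$ produce the same constant $\lambda_0$ in both regions. Any mismatch would leave an $\mathcal{O}(1)$ error on the Eguchi-Hanson cores. I would verify this directly in the frame $e_0,\dots,e_3$, using $\omega_I^2=2e_0\wedge e_1\wedge e_2\wedge e_3$ together with the explicit form $\Omega=(\frac{1}{u}du+i\sigma_3)\wedge(z_1dz_2-z_2dz_1)$.
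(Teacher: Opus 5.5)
Your proposal is correct and follows the same route as the paper. The paper's argument is that $e_a$ is constant away from the gluing annuli and $\mathcal{O}(a^4)$, together with its derivatives, on them, so that $a^{-4+\epsilon}\cdot a^4=a^\epsilon$ handles the $L^2$ part. You also make explicit two points the paper passes over: you choose $\lambda$ so that $e_a$ actually lies in the mean-zero space $Y$, which moves $\lambda$ from the common flat/Eguchi--Hanson value by only $\mathcal{O}(a^4)$, and you control $[e_a]_\alpha$ through a $C^1$ bound rather than the paper's expression $\mathcal{O}(a^4)d(x,y)^{-\alpha}$.

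One small slip: with $\omega_{\mathbf{T}^4}=\frac{i}{2}(dz_1\wedge d\bar z_1+dz_2\wedge d\bar z_2)$ you get $\omega_{\mathbf{T}^4}^2=\frac{1}{2}\chi\wedge\bar\chi$, so $\lambda_0=\frac12$ rather than $\frac14$. This is exactly the constant your identity $\Omega\wedge\bar\Omega=2\omega_I^2$ gives on the Eguchi--Hanson cores, so the normalisations do match and there is no $\mathcal{O}(1)$ error there.
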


\textbf{Proof:} Consider the following which we know to be true as the Eguchi-Hanson space and $\mathbf{T}^4$ are Calabi-Yau also note that for $\lambda>0$ :
\begin{align}
    \begin{split}
    \omega_{EH,a}^2-\lambda\chi\wedge\bar\chi&=0 \text{, on the Eguch-Hanson space}\\
    \omega_{flat}^2-\lambda\chi\wedge\bar\chi&=0\text{, on }\mathbf{T}^4
    \end{split}
\end{align}
Now by considering the explicit formulas for $\omega_{EH,a}$ and $\omega_{flat}$ given earlier, in the gluing region we get that:
\begin{align}
     \omega_{EH,a}^2-\omega_{flat}^2=\mathcal{O}(a^4)
\end{align}
So we have that $\omega_0^2-\lambda\chi\wedge\bar\chi=0$ everywhere except the gluing region. We can now rewrite this in the following way:
\begin{align}
    \omega_0^2-\lambda\chi\wedge\bar\chi=(\omega_{flat}^2-\lambda\chi\wedge\bar\chi)+(\omega_0^2-\omega_{flat}^2)=0+\mathcal{O}(a^4)
\end{align}

We can now consider $\|e_a\|_Y$ directly:

\begin{align}
    \begin{split}
    \|e_a\|_Y&=a^{-4+\epsilon}\|e_a\|_{L^2(Z)}+\|e_a\|_{C^{0,\alpha}(Z)}=a^{-4+\epsilon}\|e_a\|_{L^2(Z)}+\|e_a\|_{C^0(Z)}+[e_a]_\alpha\\
    &=a^{-4+\epsilon}\mathcal{O}(a^4)+\mathcal{O}(a^4)+\mathcal{O}(a^4)d(x,y)^{-\alpha}=\mathcal{O}(a^\epsilon)\leq c a^\epsilon 
    \end{split}
\end{align}
for small $a$.
\qed

\textbf{Note:} There is an assumption in the above argument which requires some justification, namely that the two $\lambda$'s in $\omega_{EH,a}^2-\lambda\chi\wedge\bar\chi=0$ and $\omega_{flat}^2-\lambda\chi\wedge\bar\chi=0$ are the same. This follows from the construction, as in general there is no reason these should be equal in two separate spaces but in our construction we require that we rescale the Eguchi-Hanson space to 'fit' into the region $V_i$, this rescaling is such that we have $G=\varphi_a-\varphi_{flat}=\frac{a^2}{4}\log(\frac{r^2-a^2}{r^2+a^2})$, which then gives us that $|\omega_{EH,a}^2-\omega_{flat}^2|=\mathcal{O}(a^4)$ but if $\lambda_{EH}\neq\lambda_{flat}$ we also have $|\omega_{EH,a}^2-\omega_{flat}^2|=\mathcal{O}(a^4)+|(\lambda_{EH}-\lambda_{flat})\chi\wedge\bar\chi|$, which is clearly contradictory.\\
\\
 Going forward, we are going to use the following symbol, $\lesssim$, to denote the following $x\leq Cy$, where $C$ is a constant independent of $a$, the familiar variable from the Eguchi-Hanson space. So, $x\lesssim y\implies x\leq Cy$.

\begin{prop} \label{laplace}
    The map $\Delta:X\rightarrow Y$ has a bounded inverse, specifically $\|\Delta^{-1}\|_{Y\rightarrow X}\leq 1$.
\end{prop}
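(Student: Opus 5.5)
Given $f\in Y$ (mean zero), I will first produce a mean-zero solution $u$ of $\Delta u=f$ on the compact Kummer surface $Z$ with metric $\omega_0$. The Laplacian is self-adjoint with kernel the constants, so it is an isomorphism between mean-zero functions in $L^2_2$ and mean-zero $L^2$ functions; elliptic regularity then upgrades $u$ to $C^{2,\alpha}$. The real content is the quantitative estimate $\|u\|_X\lesssim\|f\|_Y$ with constant independent of $a$. To get the stated normalisation $\le 1$, I would rescale the weights in $\|\cdot\|_X$, which only changes constants. I will split $\|u\|_X$ into its $L^2_2$ part and its $C^{2,\alpha}$ part and bound each separately.

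\textbf{Sobolev part.} By the Poincar\'e inequality (\ref{Poincare}), $\|u\|_{L^2}\le\lambda_1^{-1}\|\nabla u\|_{L^2}$. Integrating by parts gives $\|\nabla u\|_{L^2}^2=\int u f\le\|u\|_{L^2}\|f\|_{L^2}$, and combining the two yields $\|u\|_{L^2}+\|\nabla u\|_{L^2}\lesssim\|f\|_{L^2}$. For this constant to be independent of $a$, I will invoke the Li--Yau estimate (\ref{LiYau}). The metric $\omega_0$ has diameter bounded independently of $a$, since it is flat outside small balls and Eguchi--Hanson inside. It is also Ricci flat away from the gluing annuli, where $\omega_0-\omega_{flat}=O(a^4 r^{-4})$ with derivatives, so the Ricci curvature is uniformly bounded there. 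Hence $\lambda_1$ is bounded below uniformly. For the second derivatives I will use the integrated Bochner formula (\ref{Bochner}), which gives $\|\nabla^2u\|_{L^2}\le\|\Delta u\|_{L^2}$. The Ricci term there is nonnegative except in the gluing region, and there it is $O(a^4)$ and is absorbed using the first-order bound. Together these give $a^{-4+\epsilon}\|u\|_{L^2_2}\lesssim a^{-4+\epsilon}\|f\|_{L^2}\le\|f\|_Y$.

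\textbf{H\"older part.} I will apply Proposition \ref{improveSch} on each ball $B_r(p)$ with $r=\mathcal{O}(a)$. To get constants independent of $a$, I will rescale each ball by $a^{-1}$ to unit size. After rescaling the metric is uniformly controlled: it is either Euclidean or a fixed Eguchi--Hanson metric with parameter $1$. Undoing the scaling, the $C^{2,\alpha}$ seminorm of order $k+\alpha$ scales like $a^{-(k+\alpha)}$, the $C^{0,\alpha}$ norm of $\Delta u$ picks up $a^{2}$, and the $L^p$ norm picks up $a^{-4/p}$. With $p=6$ this is where $\epsilon=2-4/p$ enters. The outcome is an estimate of the form
\begin{align}
a^\alpha\|u\|_{C^{2,\alpha}(B_r)}\lesssim\|f\|_{C^{0,\alpha}(B_{2r})}+a^{-2-\epsilon}\|u\|_{L^p(B_{2r})}\cdot a^{\,\cdots}.
\end{align}
The last term I will control using the Sobolev embedding $L^2_2\hookrightarrow L^6$ in dimension four together with the Sobolev part, which turns it into $a^{-4+\epsilon}\|f\|_{L^2}$.

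\textbf{Main obstacle.} The main difficulty is getting the powers of $a$ in the rescaling to balance exactly. This is what dictates $\epsilon=2-n/p$ and $\alpha<1/3$. A second difficulty is making the elliptic constants in the Schauder and Calder\'on--Zygmund estimates uniform across the degenerating Eguchi--Hanson necks. I would handle this first by noting that after rescaling only two model geometries appear, flat and $a=1$ Eguchi--Hanson, both with bounded geometry on unit balls. Boundedness of the inverse then follows by taking the maximum over $p$.
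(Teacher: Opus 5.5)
Your proposal follows the paper's proof. The $a^{-4+\epsilon}L^2_2$ part comes from the Li--Yau lower bound on $\lambda_1$ (using uniformly bounded diameter and Ricci curvature) plus the integrated Bochner formula. The $a^\alpha C^{2,\alpha}$ part comes from the $L^p$-Schauder estimate of Proposition \ref{improveSch} rescaled to balls of radius $\mathcal{O}(a)$; there the $L^p$ term carries exactly $a^{-(2+4/p)}=a^{-4+\epsilon}$, so nothing further needs to balance, and it is closed off with $L^2_2\hookrightarrow L^6$ and the Sobolev bound. You are more careful than the paper in absorbing the $\mathcal{O}(a^4)$ Ricci error in Bochner and in getting uniform elliptic constants by rescaling to the flat and parameter-one Eguchi--Hanson models. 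However, like the paper you obtain a uniform constant rather than literally $1$, and you still need to justify that the $L^2_2\hookrightarrow L^6$ constant is independent of $a$ (the paper appeals to Hebey). Also, $\alpha<1/3$ plays no role here; it is needed only in the later contraction step.
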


\textbf{Proof:} In order to show this, it is sufficient to show that $\| u\|_X\lesssim\|\Delta u\|_Y$. Recall:
\begin{align}
    \|u\|_X=a^{-4+\epsilon}\|u\|_{L^2_2}+a^\alpha\|u\|_{C^{2, \alpha}}
\end{align}
We will deal with each part of this separately:
\begin{align}\label{L22}
    \begin{split}
    \|u\|^2_{L^2_2}&=\sum_{j=0}^2\|\nabla^ju\|^2_{L^2}=\|u\|^2_{L^2}+\|\nabla u\|^2_{L^2}+\|\nabla^2u\|^2_{L^2}\lesssim \|u\|^2_{L^2}+\|\nabla^2u\|^2_{L^2}\\&\underset{\ref{Bochner}}{\lesssim}\|u\|^2_{L^2}+\|\Delta u\|^2_{L^2}\underset{\ref{LiYau}}{\lesssim} \frac{1}{\lambda}\|\Delta u\|^2_{L^2} +\|\Delta u\|_{L^2}^2\lesssim \|\Delta u\|^2_{L^2}\\
    \implies a^{-4+\epsilon}\|u\|_{L^2_2}&\lesssim\|\Delta u\|_Y
    \end{split}
\end{align}
Observe that we can apply the Li-Yau estimate, \ref{LiYau}, because our construction has Ricci curvature and diameter bounded independently from $a$, so the first non-zero eigen value of the Laplacian, $\lambda$ is bounded independently of $a$. To see these two terms are bounded we can first consider the diameter, we can see that this is bounded as the glued in Eguchi-Hanson parts are uniformly bounded and the orbifold has some constant diameter, so the only bit not considered is the gluing region, but the diameter here, from our construction, is also clearly bounded for small $a$, so the diameter is bounded. Now considering the Ricci curvature, this follows from our construction of $\omega_0$ as we have that this is Ricci flat everywhere except the gluing region, and in this region it is $\omega_{EH,a}^2-\omega_{\text{flat}}^2=\mathcal{O}(a^4)$, from this it then follows that Ricci curvature is bounded for small $a$ from the following calculation:
\begin{align}
    \log(\omega_{EH,a}^2)=\log(\omega^2_{\text{flat}}(1+\mathcal{O}(a^4))=\log(\omega_{\text{flat}}^2)+\mathcal{O}(a^4)
\end{align}
Now to look at the $a^\alpha\|u\|_{C^{2,\alpha}}$ term, by considering the Schauder estimate given earlier, \ref{schauder}, on balls $B_1$ and $B_2$ we have:
\begin{align}
     \|u\|_{C^{2,\alpha}(B_1)}\leq C(\|f\|_{C^{0,\alpha}(B_2)}+\|u\|_{L^p(B_2)})
\end{align}
We want to rescale the balls $B_1$ and $B_2$ by a factor of $r$. To do this, we will look at how each term rescales under $x_r(q)=x(rq)$, $q\in B_k$:
\begin{align}
    \begin{split}
    \|x_r\|_{C^{2,\alpha}(B_1)}&=\|x_r\|_{C^2(B_1)}+[\nabla^2x_r]_\alpha=r^2\|x\|_{C^2(B_r)}+r^{2+\alpha}[\nabla^2x]_\alpha\\
    &\gtrsim r^{2+\alpha}\|x\|_{C^{2,\alpha}(B_r)}\\
    \|\Delta x_r\|_{C^{0,\alpha}(B_2)}&=\|\Delta x_r\|_{C^0(B_2)}+[\Delta x_r]_\alpha\lesssim r^2\|\Delta x\|_{C^0(B_{2r})}+r^{2+\alpha}[\Delta x]_\alpha\\
    &\lesssim r^2\|\Delta x\|_{C^{0,\alpha}(B_{2r})}\\
    \|x_r\|^p_{L^p(B_2)}&= r^{-n}\|x\|_{L^p(B_{2r})}^p, \implies \|x_r\|_{L^p(B_2)}= r^{-n/p}\|x\|_{L^p(B_{2r})}
    \end{split}
\end{align}
Note in our second inequality that the last inequality follows from choosing small enough $r$. From this we have the rescaled Schauder estimate:
\begin{align}\label{estBr}
    \|u\|_{C^{2,\alpha}(B_r)}\lesssim r^{-(2+\alpha)}(r^2\|\Delta u\|_{C^{0,\alpha}(B_{2r})}+r^{-n/p}\|u\|_{L^p(B_{2r})})
\end{align}
We now need to expand this over the Kummer surface, $Z$, we do this by using harmonic coordinates, see \cite[p.298]{JoyceBook}, we can cover $Z$ in balls of radius $r=\mathcal{O}(a)$, which are close to Euclidean. On each of these balls we can then apply our estimate, \ref{estBr}, this is analogue of \cite[Theorem G1, p.295]{JoyceBook}. Note, above we used $B_r$ to denote Euclidean balls of radius $r$ and below we will use $B_r(p)$ to denote almost Euclidean balls of radius $r$ on $Z$. We also choose $p$ below, such that $\|.\|_{C^{2,\alpha}(Z)}=\|.\|_{C^{2,\alpha}(B_r(p))}$. 
\begin{align}
\begin{split}
    \|u\|_{C^{2,\alpha}(Z)}&=\|u\|_{C^{2,\alpha}(B_r(p))}\lesssim r^{-\alpha}\|\Delta u\|_{C^{0,\alpha}(B_{2r}(p))}+r^{-(2+\alpha+n/p)}\|u\|_{L^p(B_{2r}(p))}\\&\lesssim r^{-\alpha}\|\Delta u\|_{C^{0,\alpha}(B_{2r}(p))}+r^{-(2+\alpha+n/p)}\|u\|_{L^p(Z)}
    \end{split}
\end{align}
By covering the ball $B_{2r}(p)$ by $N$ balls of radius $r$ centred at $p_i$ we have:
\begin{align}
    \begin{split}
    \|u\|_{C^{2,\alpha}(Z)}&\lesssim r^{-\alpha}\sum_{i=1}^N\|\Delta u\|_{C^{0,\alpha}(B_{r}(p_i))}+r^{-(2+\alpha+n/p)}\|u\|_{L^p(Z)}\\&\lesssim Nr^{-\alpha}\|\Delta u\|_{C^{0,\alpha}(B_r(p))}+r^{-(2+\alpha+n/p)}\|u\|_{L^p(Z)}
    \\&\lesssim a^{-\alpha}\|\Delta u\|_{C^{0,\alpha}(Z)}+a^{-(2+\alpha+n/p)}\|u\|_{L^p(Z)}
    \end{split}
\end{align}
Observe that $N$ is independent of both $r$ and $a$ and the last relationship comes from $r=\mathcal{O}(a)$. It then follows, by combining the above inequalities, that:
\begin{align}
    \begin{split}
   a^\alpha \|u\|_{C^{2,\alpha}(Z)}&\lesssim \|\Delta u\|_{C^{0,\alpha}(Z)}+a^{-(2+n/p)}\|u\|_{L^p(Z)}\lesssim\|\Delta u\|_{C^{0,\alpha}(Z)}+a^{\epsilon-4}\|u\|_{L^2_2(Z)}\\
   &\lesssim \|\Delta u\|_{C^{0,\alpha}(Z)}+\|\Delta u\|_Y\lesssim \|\Delta u\|_Y
   \end{split}
\end{align}
For the second relationship, we have used $\epsilon=4-2-n/p$ and taking $n=4$ and $p=6$ we have that $L^2_2\hookrightarrow L^p$. Note that the choice for the value of $p$ here is not unique and by choosing a different value for $p$ we get a different range of possible values for $\alpha$. For the third relationship, we have used \ref{L22}. The embedding $L^2_2\hookrightarrow L^p$ is not trivial in this setting and could depend on $a$. However, using that the Ricci curvature, diameter and volume are bounded for small $a$ we can apply \cite[Theorem 4.2, p9]{Hebey1996} and our estimate, \ref{estBr}, to see that this result also holds in our setting. So we have:
\begin{align}
    \|u\|_X=a^{\epsilon-4}\|u\|_{L^2_2(Z)}+a^\alpha\|u\|_{C^{2,\alpha}(Z)}\lesssim\|\Delta u\|_Y+\|\Delta u\|_Y\lesssim\|\Delta u\|_Y
\end{align}
which is the desired result. \qed

\begin{prop} \label{quadratic}
    We have the following estimate for small $a$:
    \begin{align}
        \|Qu_1-Qu_2\|_Y\lesssim a^{-2\alpha}\|u_1-u_2\|_X\|u_1+u_2\|_X
    \end{align}
\end{prop}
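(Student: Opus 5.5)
The plan is to use the algebraic structure of $Q$ first and then estimate the two pieces of the $Y$-norm separately. Since $Q(u)=(\mathcal{D}u)^2/\omega_0^2$ is a symmetric bilinear expression evaluated on the diagonal, the identity
\begin{align}
    Q(u_1)-Q(u_2)=\frac{(\mathcal{D}u_1)^2-(\mathcal{D}u_2)^2}{\omega_0^2}=\frac{\mathcal{D}(u_1-u_2)\wedge\mathcal{D}(u_1+u_2)}{\omega_0^2}
\end{align}
holds because $2$-forms commute under the wedge product. Writing $v=u_1-u_2$ and $w=u_1+u_2$, it is therefore enough to show that the bilinear quantity $B(v,w)=\mathcal{D}v\wedge\mathcal{D}w/\omega_0^2$ satisfies $\|B(v,w)\|_Y\lesssim a^{-2\alpha}\|v\|_X\|w\|_X$.

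\textbf{Pointwise control.} Pointwise, $|B(v,w)|\lesssim|\nabla^2v|\,|\nabla^2w|$. The implied constant depends only on the comparison of $\omega_0$ with the flat or Eguchi--Hanson metric, so it is uniform in $a$. This uniformity uses that $\omega_0$ is uniformly equivalent to a Calabi--Yau metric, as in the proof of \ref{ea}.

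\textbf{H\"older part.} Working on each ball $B_r(p)$ with $r=\mathcal{O}(a)$, the H\"older product rule $[fg]_\alpha\le\|f\|_{C^0}[g]_\alpha+[f]_\alpha\|g\|_{C^0}$ gives
\begin{align}
    \|B(v,w)\|_{C^{0,\alpha}(Z)}\lesssim\|v\|_{C^{2,\alpha}(Z)}\|w\|_{C^{2,\alpha}(Z)}\le a^{-2\alpha}\bigl(a^\alpha\|v\|_{C^{2,\alpha}}\bigr)\bigl(a^\alpha\|w\|_{C^{2,\alpha}}\bigr)\le a^{-2\alpha}\|v\|_X\|w\|_X .
\end{align}
This step is what produces the factor $a^{-2\alpha}$. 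One technical point must be checked here: the H\"older seminorm of the coefficient $1/\omega_0^2$, and of the tensor contraction, must stay bounded on balls of radius $\mathcal{O}(a)$. This is plausible because, after rescaling by $a$, the Eguchi--Hanson geometry is fixed, and the flat region contributes nothing.

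\textbf{$L^2$ part.} Here I would use
\begin{align}
    a^{-4+\epsilon}\|B(v,w)\|_{L^2}\lesssim a^{-4+\epsilon}\|\nabla^2 v\|_{L^2}\|\nabla^2 w\|_{C^0}\le a^{-4+\epsilon}\|v\|_{L^2_2}\|w\|_{C^{2,\alpha}} .
\end{align}
The factor $a^{-4+\epsilon}\|v\|_{L^2_2}$ is bounded by $\|v\|_X$, and $\|w\|_{C^{2,\alpha}}\le a^{-\alpha}\|w\|_X\le a^{-2\alpha}\|w\|_X$ for $a<1$. Adding the H\"older and $L^2$ estimates gives the claim.

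\textbf{Main obstacle.} I expect the hardest step to be the uniformity in $a$ of the constants in the pointwise bound and in the H\"older product estimate on the small balls. This covers the bounded geometry of $\omega_0$ in the gluing and Eguchi--Hanson regions, and the fact that the local norms $\|\cdot\|_{C^{k,\alpha}(B_r(p))}$, taken as a maximum over balls, interact correctly with products. I would first try to handle this by pulling back each ball under the scaling by $a$ to a fixed geometry, as in the rescaling step in the proof of \ref{laplace}.
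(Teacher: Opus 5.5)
Your proposal is correct and matches the paper's proof. The paper also factors $Qu_1-Qu_2=\mathcal{D}(u_1-u_2)\wedge\mathcal{D}(u_1+u_2)/\omega_0^2$, bounds the weighted $L^2$ part by a $C^0\times L^2$ H\"older inequality (giving $a^{-\alpha}$, with the roles of $u_1\pm u_2$ swapped relative to yours), and bounds the $C^{0,\alpha}$ part by the product estimate (giving $a^{-2\alpha}$); the paper simply drops $\omega_0^2$, and the obstacle you flag is milder than you expect: $\omega_0$ and its complex structure are parallel for the Levi-Civita connection of $\omega_0$, so $B(v,w)$ is a parallel bilinear expression in $\nabla^2 v,\nabla^2 w$ that commutes with the parallel transport used in the H\"older seminorm, and the constants are purely dimensional with no rescaling needed.
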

\textbf{Proof:} To begin observe the following, which will be useful going forward:
\begin{align} \label{longexpansion}
    \begin{split}
    \|u_1-u_2\|_X\|u_1+u_2\|_X&=(a^{-4+\epsilon})^2\|u_1-u_2\|_{L^2_2(Z)}\|u_1+u_2\|_{L^2_2(Z)}\\&+a^{-4+\epsilon+\alpha}(\|u_1-u_2\|_{L^2_2(Z)}\|u_1+u_2\|_{C^{2,\alpha}(Z)}+\|u_1+u_2\|_{L^2_2(Z)}\|u_1-u_2\|_{C^{2,\alpha}(Z)})\\
    &+a^{2\alpha}\|u_1-u_2\|_{C^{2,\alpha}(Z)}\|u_1+u_2\|_{C^{2,\alpha}(Z)}
    \end{split}
\end{align}
Now recalling our definition for our quadratic operator, $Q$, we have the following:
\begin{align}
    \|Qu_1-Qu_2\|_Y=\|((\mathcal{D}u_1)^2-(\mathcal{D}
u_2)^2)/\omega_0^2\|_Y=\|((\mathcal{D}u_1-\mathcal{D}u_2)(\mathcal{D}u_1+\mathcal{D}u_2))/\omega_0^2\|_Y
\end{align}
Now for simplicity we are going to omit the $\omega_0$ as this makes no material difference in the analysis. So we have:
\begin{align}
    \|Qu_1-Qu_2\|_Y=a^{-4+\epsilon}\|(\mathcal{D}u_1-\mathcal{D}u_2)(\mathcal{D}u_1+\mathcal{D}u_2)\|_{L^2(Z)}+\|(\mathcal{D}u_1-\mathcal{D}u_2)(\mathcal{D}u_1+\mathcal{D}u_2)\|_{C^{0,\alpha}(Z)}=\text{I}+\text{II}
\end{align}
We will now consider I and II separately.
\begin{align}
    \begin{split}
    \text{I}&\leq t^{-4+\epsilon}\|\mathcal{D}u_1-\mathcal{D}u_2\|_{C^0(Z)}\|\mathcal{D}u_1+\mathcal{D}u_2\|_{L^2(Z)}=a^{-4+\epsilon}\|\mathcal{D}(u_1-u_2)\|_{C^0(Z)}\|\mathcal{D}(u_1+u_2)\|_{L^2(Z)}\\
    &=a^{-4+\epsilon}\|\nabla^2(u_1-u_2)\|_{C^0(Z)}\|\nabla^2(u_1+u_2)\|_{L^2(Z)}\leq a^{-4+\epsilon}\|u_1-u_2\|_{C^2(Z)}\|u_1+u_2\|_{L^2_2(Z)}\\
    &\leq a^{-\alpha}\|u_1-u_2\|_X\|u_1+u_2\|_X
    \end{split}
\end{align}
Here, the first inequality follows from the standard H\"older inequality and the last inequality is from noticing that this matches the third term in our expression earlier \ref{longexpansion}. Now to consider II:
\begin{align}
\begin{split}
    \text{II}&\leq \|\mathcal{D}u_1-\mathcal{D}u_1\|_{C^{0,\alpha}(Z)}\|\mathcal{D}u_1-\mathcal{D}u_2\|_{C^{0,\alpha}(Z)}\leq \|u_1-u_2\|_{C^{2,\alpha}(Z)}\|u_1+u_2\|_{C^{2,\alpha}(Z)}\\&\leq a^{-2\alpha}\|u_1-u_2\|_X\|u_1+u_2\|_X
    \end{split}
\end{align}
Here, the first inequality is from the definition and the last two are the same arguments as used in I, but this time we identify the match with the final term in \ref{longexpansion}. So, considering a sufficiently small $a$ we have the desired result. \qed

\section{A Calabi-Yau Metric}\label{4}
Now that we have estimates for $e_a$, $\Delta$ and $Q$ we can further reduce our problem to finding a fixed point of a suitable map, which represents the problem given earlier, \ref{CYequation0}. This reduces to showing that we can locally apply the Banach fixed point theorem, it is here that we have to apply our estimates. Once we have this, we have our local solution and so our Calabi-Yau metric on the Kummer surface.\\
\\

Now, considering our problem as a whole, begin by considering the following map, $F$, and equation for a fixed point of said map:
\begin{align}
    F=-e_a-Q\circ \Delta^{-1}\text{ , fixed point} \implies \psi=-e_a-Q\circ \Delta^{-1}(\psi)
\end{align}
Now let $\phi=\Delta^{-1}\psi$:
\begin{align}
    \Delta(\phi)=-e_a-Q\circ \Delta^{-1}(\Delta(\phi))
\end{align}
Which is exactly our initial problem. So we need to find the fixed points, $\psi$, of $F$ and then by applying $L^{-1}$ we have our solutions, $\phi$. To show this, we are going to apply the Banach Fixed Point theorem.

\begin{theorem}
    (Banach Fixed Point theorem) Let $(X, d)$ be a non-empty complete metric space: and let $F:X\rightarrow X$ be a contraction,  i.e:
    \begin{align}
        d(F(x_1), F(x_2))\leq kd(x_1, x_2) \text{, for } x_1, x_2\in X\text{ and } k<1.
    \end{align}
    Then $F$ has a unique fixed point, $x=F(x)$. \cite[p.138]{Sobolev}
\end{theorem}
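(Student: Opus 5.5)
The plan is the classical contraction-mapping argument. First, existence: take any $x_0\in X$ and form the Picard iterates $x_{n+1}=F(x_n)$. Induction on $n$ gives $d(x_{n+1},x_n)\le k^n d(x_1,x_0)$. For $m>n$ the triangle inequality and the geometric series then give
\begin{align}
d(x_m,x_n)\le \sum_{j=n}^{m-1}k^j d(x_1,x_0)\le \frac{k^n}{1-k}\,d(x_1,x_0).
\end{align}
Since $0\le k<1$, the right-hand side tends to $0$ as $n\to\infty$, so $(x_n)$ is Cauchy. Completeness of $(X,d)$, which is where non-emptiness and completeness enter, gives a limit $x=\lim x_n$.

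Next, $x$ is a fixed point. A contraction is Lipschitz, hence continuous, so
\begin{align}
d(F(x),x)\le d(F(x),F(x_n))+d(x_{n+1},x)\le k\,d(x,x_n)+d(x_{n+1},x)\to 0 .
\end{align}
This forces $F(x)=x$.

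Then uniqueness: if $x=F(x)$ and $y=F(y)$, we have $d(x,y)=d(F(x),F(y))\le k\,d(x,y)$. This gives $(1-k)d(x,y)\le 0$, and since $k<1$, $d(x,y)=0$.

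None of these steps is a genuine obstacle. The only care needed is in the Cauchy estimate: one must sum the geometric series rather than use only consecutive distances, and use $k<1$ strictly, since $k=1$ fails (translations on $\mathbf{R}$, say). For the later application in this section, the same argument works verbatim on a closed ball in the Banach space $Y$. A closed subset of a complete space is complete, so there one additionally checks, using \ref{ea}, \ref{laplace} and \ref{quadratic}, that $F$ maps the ball into itself.
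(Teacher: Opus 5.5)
Your proof is correct and is the standard Picard-iteration argument. The paper does not prove this theorem itself but cites Brezis (p.~138), whose proof is this same iteration: the Cauchy bound $d(x_n,x_{n+p})\le \frac{k^n}{1-k}\,d(x_0,x_1)$, then passage to the limit, then the one-line uniqueness argument. One cosmetic point: you tacitly assume $k\ge 0$, which costs nothing since you may replace $k$ by $\max(k,0)$.
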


\begin{theorem}
    There exists $R>0$ and $B_R=\{u\in Y:\|\psi\|_Y\leq R\}$, such that for small $a$, $F:Y\rightarrow Y$ has a unique fixed point $\psi$ in $B_R$.
\end{theorem}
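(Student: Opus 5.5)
The plan is to apply the Banach Fixed Point theorem to $F=-e_a-Q\circ\Delta^{-1}$ on the closed ball $B_R\subset Y$. I will take the radius to depend on $a$, namely $R=2c\,a^\epsilon$, where $c$ is the constant from \ref{ea}. Since $Y$ is a Banach space (the completion of smooth mean-zero functions) and $B_R$ is closed in it, $B_R$ is a non-empty complete metric space. It therefore suffices to show two things for small $a$: that $F(B_R)\subseteq B_R$, and that $F$ is a contraction on $B_R$.

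First I check that $F$ actually takes values in $Y$, i.e. that its output has mean zero. I expect this to be the main point needing care, since the paper has not yet addressed it. I will fix $\lambda$ so that $\int_Z\omega_0^2=\lambda\int_Z\chi\wedge\bar\chi$; with this choice $\int_Z e_a\,\omega_0^2=0$. Next, $(\mathcal{D}\phi)^2=\mathcal{D}\phi\wedge\mathcal{D}\phi$ is exact, so $\int_Z Q(\phi)\,\omega_0^2=\int_Z(\mathcal{D}\phi)^2=0$ by Stokes. Hence both terms of $F$ have mean zero with respect to the $\omega_0$-volume, which is the measure used to define $Y$.

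For the self-map property, write $\phi=\Delta^{-1}\psi$. By \ref{laplace}, $\|\phi\|_X\lesssim\|\psi\|_Y\le R$. Applying \ref{quadratic} with $u_1=\phi$ and $u_2=0$, and using $Q(0)=0$, gives $\|Q\phi\|_Y\lesssim a^{-2\alpha}\|\phi\|_X^2\lesssim a^{-2\alpha}R^2$. Combining this with \ref{ea},
\begin{align}
\|F\psi\|_Y\le c\,a^\epsilon+C a^{-2\alpha}R^2=c\,a^\epsilon+4Cc^2a^{2\epsilon-2\alpha}.
\end{align}
This is at most $R=2c\,a^\epsilon$ once $a^{\epsilon-2\alpha}\le 1/(4Cc)$. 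That holds for small $a$ because $\epsilon=\tfrac43>\tfrac23>2\alpha$, using $\alpha\in(0,\tfrac13)$.

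For the contraction, take $\psi_1,\psi_2\in B_R$ with $\phi_i=\Delta^{-1}\psi_i$. Then \ref{quadratic} and \ref{laplace} give
\begin{align}
\|F\psi_1-F\psi_2\|_Y\lesssim a^{-2\alpha}\|\phi_1-\phi_2\|_X\|\phi_1+\phi_2\|_X\lesssim a^{-2\alpha}R\,\|\psi_1-\psi_2\|_Y\lesssim a^{\epsilon-2\alpha}\|\psi_1-\psi_2\|_Y .
\end{align}
The Lipschitz constant is $O(a^{\epsilon-2\alpha})$, so it is less than $1$ for small $a$. Banach's theorem then yields a unique fixed point $\psi\in B_R$.

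Finally, I must confirm that $\phi=\Delta^{-1}\psi$ gives a genuine Kähler form, i.e. that $\omega_0+\mathcal{D}\phi>0$. From the definition of $\|.\|_X$,
\begin{align}
\|\phi\|_{C^{2}}\le a^{-\alpha}\|\phi\|_X\lesssim a^{\epsilon-\alpha}\to0 .
\end{align}
The term $\mathcal{D}\phi$ is therefore uniformly small. Positivity follows if $\omega_0$ is uniformly bounded below in the normalisation of the balls $B_r(p)$, which I will take from the explicit form of $\omega_0$ (the Eguchi-Hanson metric and the flat metric glued by $\beta$). I expect the mean-zero bookkeeping and this positivity check to be the delicate steps. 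The contraction estimates themselves are routine once the exponent condition $2\alpha<\epsilon$ is noted.
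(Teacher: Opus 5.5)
Your proposal is correct and follows the paper's route: Banach's fixed point theorem on a ball in $Y$, using Propositions \ref{ea}, \ref{laplace} and \ref{quadratic}. The differences are your radius and your mean-zero check. Your $R=2c\,a^{\epsilon}$ replaces the paper's $R=a^{\epsilon/2}$; yours needs only $2\alpha<\epsilon$ and places the solution at $\|\psi\|_Y\lesssim a^{\epsilon}$, while the paper's needs $\alpha<\tfrac13$ but gives uniqueness in a larger ball. The paper omits the mean-zero check, although it is needed for $F$ to map into $Y$.

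One point to add to that check. Normalising $\lambda$ by $\int_Z\omega_0^2=\lambda\int_Z\chi\wedge\bar\chi$ moves it by $O(a^4)$ from the local value $\lambda_0$ used in the proof of Proposition \ref{ea}, because $\int_Z(\omega_0^2-\lambda_0\chi\wedge\bar\chi)$ only sees the gluing annuli, where the integrand is $O(a^4)$. So $e_a$ changes by a term of size $O(a^4)$, and $\|e_a\|_Y\lesssim a^{\epsilon}$ still holds.
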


\textbf{Proof:}
To show this, we need to show that $F$ is a contraction on $B_R$ and that $F$ maps $B_R$ into $B_R$. Begin by setting $R=a^{\epsilon/2}$ and restricting $F$ to this ball. To show that this is is a contraction consider the following, for $u_1, u_2\in B_R$:
\begin{align}
    \begin{split}
    \|F(u_1)-F(u_2)\|_Y&=\|-e_a-Q\circ \Delta^{-1}u_1+e_a+Q\circ \Delta^{-1}u_2\|_Y=\|Q\circ \Delta^{-1}u_1-Q\circ \Delta^{-1}u_2\|_Y\\
    &\leq a^{-2\alpha}\|\Delta^{-1}u_1-\Delta^{-1}u_2\|_X\|\Delta^{-1}u_1+\Delta^{-1}u_2\|_X\leq  a^{-2\alpha}\|u_1-u_2\|_Y\|u_1+u_2\|_Y\\
    &\leq 2a^{-2\alpha}R\|u_1-u_2\|_Y=2a^{-2\alpha+\epsilon/2}\|u_1-u_2\|_Y
    \end{split}
\end{align}
By taking $\alpha<1/3$ we have $\epsilon/2-2\alpha=1-1/3-2\alpha>0$, so for small $a$ we have $2a^{\epsilon/2-2\alpha}<1$, so $F$ is a contraction. Now to show $F:B_R\rightarrow B_R$ consider the following for $u\in B_R$:
\begin{align}
    \begin{split}
    \|F(u)\|_Y&=\|-e_a-Q\circ \Delta^{-1}u\|_Y\leq \|e_a\|_Y+\|Q\circ \Delta^{-1}u\|_Y\leq ca^{\epsilon}+\|Q\circ \Delta^{-1}u\|_Y\\
    &\leq c a^{\epsilon}+a^{-2\alpha}\|\Delta^{-1}u\|_X^2\leq ca^\epsilon+a^{-2\alpha}\|u\|^2_Y\leq c a^\epsilon+a^{-2\alpha}R^2=ca^\epsilon +a^{\epsilon-2\alpha}
    \end{split}
\end{align}
As this is bounded above by $a^{\epsilon/2}$ for small enough $a$ we have that $F$ maps $B_R$ into $B_R$ as desired. We can now apply the Banach fixed point theorem to see that $F$ has a unique fixed point in $B_R$ for small $a$. \qed

\begin{cor}
    The unique fixed point $\psi$ corresponds to a local solution, $\phi$, of our Calabi-Yau equation \ref{CYequation0} and so we have a Calabi-Yau metric, $\omega=\omega_0+\mathcal{D}\phi$, on the Kummer surface. 
\end{cor}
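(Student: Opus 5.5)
Set $\phi=\Delta^{-1}\psi$, where $\psi\in B_R\subset Y$ is the fixed point from the previous theorem and $\Delta^{-1}:Y\rightarrow X$ is the bounded inverse of Proposition \ref{laplace}. Three things need checking: that $\phi$ solves \ref{CYequation0}, that $\omega_0+\mathcal{D}\phi$ is a genuine K\"ahler form, and that the resulting metric is Ricci flat.

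\textbf{Solving the equation.} The fixed point identity $\psi=-e_a-Q(\Delta^{-1}\psi)$ becomes, after substituting $\Delta\phi=\psi$,
\begin{align}
e_a+\Delta\phi+Q(\phi)=0.
\end{align}
Multiplying back by $\omega_0^2$ gives $\tilde e_a+L(\phi)+\tilde Q(\phi)=0$, which is exactly $(\omega_0+\mathcal{D}\phi)^2=\lambda\chi\wedge\bar\chi$. There is one subtlety: $X$ and $Y$ consist of mean zero functions, so $e_a$ must lie in $Y$ for $F$ to map $Y$ into itself. I would fix this by choosing $\lambda$ so that $\int_Z\omega_0^2=\lambda\int_Z\chi\wedge\bar\chi$. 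Then $\int_Z e_a\,\omega_0^2=0$, and since $\int_Z(\mathcal{D}\phi)\wedge(2\omega_0+\mathcal{D}\phi)=0$ by Stokes, the equation is consistent with the mean zero normalisation, taken with respect to the volume form $\omega_0^2$.

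\textbf{Positivity.} This is the key estimate, and I expect it to be the main obstacle. We have $\|\phi\|_X\lesssim\|\psi\|_Y\le R=a^{\epsilon/2}$, so $a^\alpha\|\phi\|_{C^{2,\alpha}}\lesssim a^{\epsilon/2}$, which gives
\begin{align}
\sup_Z|\mathcal{D}\phi|_{\omega_0}\lesssim\|\phi\|_{C^2}\lesssim a^{\epsilon/2-\alpha}.
\end{align}
Because $\alpha<1/3<\epsilon/2$, this tends to $0$ as $a\rightarrow0$. Near the exceptional spheres the background metric $\omega_0$ has scale $a$, but the comparison is made pointwise in the $\omega_0$-norm, so $\omega_0\ge c\,\omega_0$ holds uniformly with $c=1$. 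It follows that $\omega_0+\mathcal{D}\phi\ge(1-Ca^{\epsilon/2-\alpha})\omega_0>0$ for small $a$. The form $\omega_0+\mathcal{D}\phi$ is also closed and of type $(1,1)$, so it is K\"ahler.

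\textbf{Regularity and Ricci flatness.} To get smoothness, I would bootstrap. Since $\phi\in C^{2,\alpha}$, the equation is a uniformly elliptic fully nonlinear equation whose linearisation is $\Delta_{\omega_\phi}$, with coefficients in $C^{0,\alpha}$. Schauder theory, in the form of \ref{schauder} applied to difference quotients, then gives $\phi\in C^{k,\alpha}$ for every $k$. Finally, Theorem \ref{CYequation} read in reverse applies: $\omega^2=\lambda\chi\wedge\bar\chi$ with $\chi$ holomorphic and nowhere vanishing forces
\begin{align}
\rho=-i\partial\bar\partial\log(\lambda|\chi|^2)=0,
\end{align}
because $\log|\chi|^2$ is pluriharmonic. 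Hence $\omega$ is Ricci flat. The Kummer surface is simply connected, so the earlier equivalence between Ricci flatness and holonomy in $SU(2)$ upgrades this to a Calabi-Yau metric.
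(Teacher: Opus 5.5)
Your proposal is correct and takes the paper's route: the paper's whole argument is the substitution $\phi=\Delta^{-1}\psi$, which turns the fixed-point identity into $e_a+\Delta\phi+Q(\phi)=0$ and hence into the Calabi-Yau equation \ref{CYequation0}. The rest of what you do fills in steps the paper leaves implicit: positivity from $\|\phi\|_{C^2}\lesssim a^{\epsilon/2-\alpha}$, the bootstrap to smoothness, and Ricci flatness. It also includes the volume normalisation of $\lambda$, which is genuinely needed, because with the flat $\lambda$ the exceptional curves make $\int_Z e_a\,\omega_0^2$ a nonzero $O(a^4)$ quantity; your choice of $\lambda$ changes $e_a$ only by $O(a^4)$, so Proposition \ref{ea} still holds.
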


Now that we have that a Calabi-Yau metric exists locally and is locally unique, it is worth demonstrating that there is also a fairly standard way to show that if there exists a global Calabi-Yau metric it must be unique. We also have the Calabi-Yau theorem \cite[p.98]{JoyceBook};\cite[p.13]{ExtKah} which, in the Ricci flat case $c_1(M)=0$, gives us both global uniqueness and global existence. The significance of the above work is in the analytic structure and the explicit information given about the metrics, none of which is given by the Calabi-Yau theorem. As showing global uniqueness given existence is not particularly long, we have included the result below:

\begin{prop}
    On a compact K\"ahler manifold, $M$, with complex dimension $n$,  there exists at most one metric in each K\"ahler class $[\omega]\in H^{1,1}(M, \mathbf{R})$ such that $\text{Ric}(\omega)=0$. \cite[p.50]{ExtKah}
\end{prop}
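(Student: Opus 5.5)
Suppose $\omega$ and $\omega'$ are two K\"ahler forms in the same class $[\omega]\in H^{1,1}(M,\mathbf{R})$, both with $\text{Ric}=0$. The plan is to reduce to a Monge--Amp\`ere equation with trivial right-hand side and then run the standard integration-by-parts (Calabi) uniqueness argument.

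\textbf{Step 1 (potential).} Since $\omega'-\omega$ is a real exact $(1,1)$-form on a compact K\"ahler manifold, the $\partial\bar\partial$-lemma gives a smooth real function $\phi$ with $\omega'=\omega+i\partial\bar\partial\phi$. We normalise $\phi$ to have mean zero.

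\textbf{Step 2 (reduce to a volume equation).} The Ricci forms are $\rho=-i\partial\bar\partial\log\det(g_{i\bar j})$, as in the proof of the theorem relating Calabi--Yau and Ricci flat. Both vanish, so $i\partial\bar\partial\log(\omega'^n/\omega^n)=0$. The function $\log(\omega'^n/\omega^n)$ is therefore pluriharmonic, and in particular $\omega$-harmonic after taking the trace. On a compact manifold it is then constant, by the maximum principle. Integrating $\omega'^n=e^c\omega^n$ over $M$ and using that both classes agree (so the total volumes $\int\omega'^n=\int\omega^n$ coincide) forces $c=0$. Hence $(\omega+i\partial\bar\partial\phi)^n=\omega^n$; in dimension $2$ this is exactly the form of the Calabi--Yau equation \ref{CYequation}.

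\textbf{Step 3 (Calabi's argument).} Write
\begin{align}
0=\omega'^n-\omega^n=i\partial\bar\partial\phi\wedge T,\qquad T=\sum_{k=0}^{n-1}\omega'^{k}\wedge\omega^{n-1-k}.
\end{align}
Multiply by $\phi$, integrate, and use Stokes' theorem. Since $T$ is closed, this gives
\begin{align}
0=\int_M\phi\, i\partial\bar\partial\phi\wedge T=-\int_M i\,\partial\phi\wedge\bar\partial\phi\wedge T.
\end{align}
Each summand $i\,\partial\phi\wedge\bar\partial\phi\wedge\omega'^k\wedge\omega^{n-1-k}$ is a nonnegative multiple of the volume form, because $\omega,\omega'$ are positive $(1,1)$-forms. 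In particular the $k=0$ term is $\frac1n|\partial\phi|^2_\omega\,\omega^n$. Hence $\partial\phi=0$, so $d\phi=0$ since $\phi$ is real. Thus $\phi$ is constant and $\omega'=\omega$.

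\textbf{Main obstacle.} The delicate step is the positivity claim for the mixed terms in Step 3. The intended route is to diagonalise $\omega$ and $\omega'$ simultaneously at a point; each wedge of the semipositive form $i\partial\phi\wedge\bar\partial\phi$ with products of positive $(1,1)$-forms is then visibly nonnegative. Alternatively, one can keep only the $k=0$ term and discard the rest, provided their nonnegativity is established. A secondary point is the constancy of $\log(\omega'^n/\omega^n)$ in Step 2, which follows from compactness and the maximum principle.
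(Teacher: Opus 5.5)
Your proof is correct, and Steps 1--2 follow the paper's route. In both, Ricci-flatness of the two metrics makes $\log(\omega'^n/\omega^n)$ pluriharmonic and hence constant, and equality of total volumes within a fixed K\"ahler class forces that constant to be zero. Your formulation (the logarithm is harmonic) is more accurate than the paper's claim that the ratio $\omega^n/\omega_0^n$ itself is harmonic.

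The real difference is Step 3. The paper passes directly from $\int_M\omega_0^n=\int_M(\omega_0+i\partial\bar\partial\varphi)^n$ to $\varphi=0$. That integral identity holds for every $\varphi$ by Stokes' theorem, so on its own it shows nothing. The paper never explains why the pointwise equation $(\omega_0+i\partial\bar\partial\varphi)^n=\omega_0^n$ forces $\varphi$ to be constant.

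Your Calabi integration-by-parts argument supplies exactly that missing step. You write $0=\int_M\phi\,(\omega'^n-\omega^n)$ as $-\sum_k\int_M i\,\partial\phi\wedge\bar\partial\phi\wedge\omega'^k\wedge\omega^{n-1-k}$, and each term is nonnegative by simultaneous diagonalisation at a point. Your conclusion, that $\phi$ is constant and hence zero after normalising to mean zero, is also the correct form of the paper's ``$\varphi=0$''. Your version therefore costs one extra standard computation and in return gives a complete proof.
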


\textbf{Proof:} Begin by assuming that there are two different Ricci flat K\"ahler metrics in the same class, $\omega, \omega_0$,  they differ by $i\partial\bar\partial\varphi$, for a smooth $\varphi$,  so $\omega=\omega_0+i\partial\bar\partial\varphi$ \cite[p.80]{JoyceBook};\cite[p.8]{ExtKah}. Recalling earlier, we showed that $\text{Ric}(\omega)=-i\partial\bar\partial\log\det(g_{i\bar{j}})$ and observe that $\omega^n=\det(g_{i\bar{j}})(idz_1\wedge d\bar{z}_1)\wedge...\wedge(idz_n\wedge d\bar{z}_n)$ and similarly for $\omega_0$ with associated metric $g^0_{i\bar{j}}$. This then gives us the following relationships:
\begin{align}
    \begin{split}0&=\partial\bar\partial\log\omega^n=\partial\bar\partial\log\omega_0^n \\
    0&=\partial\bar\partial\log \omega^n-\partial\bar\partial\log \omega_0^n=\partial\bar\partial\log\left(\frac{\omega^n}{\omega_0^n}\right)
    \end{split}
\end{align}
This implies that $\frac{\omega^n}{\omega_0^n}$ is harmonic and as $M$ is compact this means that $\frac{\omega^n}{\omega_0^n}$ must be constant. As we know both $\omega$ and $\omega_0$ are in the same K\"ahler class, we have that:
\begin{align}
    \int_M\omega_0^n=\int_M\omega^n=\int_M(\omega_0+i\partial\bar\partial\varphi)^n
\end{align}
So we can conclude that $\varphi=0$ and so we have that $\omega=\omega_0$. \qed

This clearly shows us that if there exists a global Calabi-Yau metric on the Kummer surface then it is unique.

\bibliography{bibliography.bib}

@misc{Simpap,
      title={Calabi-{Y}au metrics on {K}ummer surfaces as a model glueing problem}, 
      author={Simon Donaldson},
      year={2010},
      eprint={1007.4218},
      archivePrefix={arXiv},
      primaryClass={math.DG},
      url={https://arxiv.org/abs/1007.4218}, 
}

@book{Cman,
author = {Huybrechts, Daniel.},
address = {Berlin, Heidelberg},
booktitle = {Complex Geometry: An Introduction},
edition = {1st ed. 2005.},
isbn = {3-540-26687-9},
language = {eng},
publisher = {Springer Berlin Heidelberg},
series = {Universitext},
title = {Complex Geometry : An Introduction },
year = {2005},
}

@article{EHmetric,
title = {Asymptotically flat self-dual solutions to euclidean gravity},
journal = {Physics Letters B},
volume = {74},
number = {3},
pages = {249-251},
year = {1978},
issn = {0370-2693},
doi = {https://doi.org/10.1016/0370-2693(78)90566-X},
url = {https://www.sciencedirect.com/science/article/pii/037026937890566X},
author = {Tohru Eguchi and Andrew J. Hanson},

}

@misc{introCY,
      title={An Introduction to {C}alabi-{Y}au Manifolds}, 
      author={Aidan Patterson},
      year={2025},
      eprint={2502.00167},
      archivePrefix={arXiv},
      primaryClass={math.DG},
      url={https://arxiv.org/abs/2502.00167}, 
}

@article{Joyce2021,
   title={A new construction of compact torsion-free {$G_2$}-manifolds by gluing families of {E}guchi–{H}anson spaces},
   volume={117},
   ISSN={0022-040X},
   url={http://dx.doi.org/10.4310/jdg/1612975017},
   DOI={10.4310/jdg/1612975017},
   number={2},
   journal={Journal of Differential Geometry},
   publisher={International Press of Boston},
   author={Joyce, Dominic and Karigiannis, Spiro},
   year={2021},
   month=feb }

@article{GH1979,
  author       = {G. Gibbons and S. Hawking},
  title        = {Classification of Gravitational Instanton Symmetries},
  journal      = {Communications in Mathematical Physics},
  volume       = {66},
  number       = {3},
  pages        = {291--310},
  year         = {1979},
  doi          = {10.1007/BF01197189},
  url          = {https://doi.org/10.1007/BF01197189}
}

@book{doca,
author = {Carmo, Manfredo Perdig{\~a}o do.},
address = {Boston},
booktitle = {Riemannian geometry},
isbn = {0817634908},
language = {eng},
lccn = {91037377},
publisher = {Birkha\"auser},
series = {Mathematics, theory \& applications},
title = {Riemannian geometry },
year = {1992},
}

@book{Sobolev,
  author    = {Haim Brezis},
  title     = {Functional Analysis, Sobolev Spaces and Partial Differential Equations},
  series    = {Universitext},
  publisher = {Springer},
  address   = {New York, NY},
  edition   = {1},
  year      = {2010},
  isbn      = {978-0-387-70913-0}
}

@book{JoyceBook,
  author    = {Joyce, Dominic D.},
  title     = {Compact Manifolds with Special Holonomy},
  publisher = {Oxford University Press},
  year      = {2000},
  series    = {Oxford Mathematical Monographs},
  address   = {Oxford}
}

@inbook{CYric, place={Cambridge},
series={London Mathematical Society Student Texts}, 
title={The Ricci form of K\"ahler manifolds},
booktitle={Lectures on Kähler Geometry}, 
publisher={Cambridge University Press}, 
author={Moroianu, Andrei}, 
year={2007}, 
pages={119–124}, collection={London Mathematical Society Student Texts}}

@book{ExtKah,
author = {Szekelyhidi, Gabor},
address = {Providence, Rhode Island},
title = {An introduction to extremal {K}{\"a}hler metrics},
isbn = {1-4704-1687-5},
language = {eng},
publisher = {American Mathematical Society},
series = {Graduate Studies in Mathematics ; Volume 152},
year = {2014 - 2014},
}

@book{bochner,
author = {Petersen, Peter},
address = {Cham},
booktitle = {Riemannian Geometry},
edition = {3rd ed. 2016.},
isbn = {3-319-26654-3},
language = {eng},
publisher = {Springer International Publishing},
series = {Graduate Texts in Mathematics, 171},
title = {Riemannian Geometry },
year = {2016},
}

@article{LiYau,
author = {Li, Peter and Yau, Shing},
year = {1980},
month = {01},
pages = {},
title = {Estimates of eigenvalues of a compact {R}iemannian manifold},
volume = {36},
isbn = {9780821814390},
journal = {Proc. Sympos. PureMath.},
doi = {10.1090/pspum/036/573435}
}

@book{CZestimate,
year = {1998},
publisher = {Springer},
author = {Gilbarg, David and Trudinger, Neil S},
booktitle = {Elliptic partial differential equations of second order},
series = {Grundlehren der mathematischen Wissenschaften, 224},
edition = {2nd ed., rev. 3rd printing.},
isbn = {354013025X},
language = {eng},
lccn = {97048777},
title = {Elliptic partial differential equations of second order },
}

@article{Yau1978,
  author  = {Yau, Shing-Tung},
  title   = {On the {R}icci curvature of a compact {Kä}hler manifold and the complex {M}onge-{A}mp{\`e}re equation},
  journal = {Communications on Pure and Applied Mathematics},
  volume  = {31},
  number  = {3},
  pages   = {339--411},
  year    = {1978}
}

@misc{jiang202,
      title={The {K}ummer Construction of {C}alabi-{Y}au and Hyper-{K}\"{a}hler Metrics on the {$K3$} Surface, and Large Families of Volume Non-collapsed Limiting Compact Hyper-{K}\"{a}hler Orbifolds}, 
      author={Thomas Jiang},
      year={2025},
      eprint={2501.08491},
      archivePrefix={arXiv},
      primaryClass={math.DG},
      url={https://arxiv.org/abs/2501.08491}, 
}

@book{Hebey1996,
  author    = {Emmanuel Hebey},
  title     = {Sobolev Spaces on {R}iemannian Manifolds},
  series    = {Lecture Notes in Mathematics},
  volume    = {1635},
  publisher = {Springer},
  address   = {Berlin, Heidelberg},
  year      = {1996},
  isbn      = {978-3-540-61722-8},
  doi       = {10.1007/BFb0092907}
}

@book{Ballmann,
  author    = {Werner Ballmann},
  title     = {Lectures on {K}\"{a}hler Manifolds},
  series    = {ESI Lectures in Mathematics and Physics},
  publisher = {European Mathematical Society},
  year      = {2006},
 
}

@misc{Platt,

  author = {Douglas, Michael R. and G{\'o}mez-Serrano, Javier and Lehmann, Fabian and Platt, Daniel and Qi, Yidi and Tong, Freid},

  title  = {Explicit elliptic estimates in geometric analysis},

  note   = {To appear}

}

@article{Topiwala,
author = {Topiwala, P.},
journal = {Inventiones mathematicae},
pages = {425-448},
title = {A new proof of the existence of Kähler-Einstein metrics on K3, I.},
url = {http://eudml.org/doc/143489},
volume = {89},
year = {1987},
}

\end{document}